\newcommand{\cov}{\operatorname{Cov}}
\newcommand{\var}{\operatorname{Var}}
\newcommand{\com}{\operatorname{Com}}
\newcommand{\Id}{\operatorname{Id}}
\newcommand{\Tr}{\mathop{\rm Tr}}
\newcommand{\card}{\mathop{\rm Card}}
\newcommand{\re}{\operatorname{Re}}
\newcommand{\id}{\mbox{\rm1\hspace{-.2ex}\rule{.1ex}{1.44ex}}
   \hspace{-.82ex}\rule[-.01ex]{1.07ex}{.1ex}\hspace{.2ex}}
\newtheorem{theo}{Theorem}[section]
\newtheorem{pr}{Proposition}[section]
\newtheorem{lem}{Lemma}[section]
\begin{document}

\begin{frontmatter}

% "Title of the paper"
\title{On quenched and annealed critical curves of random pinning model with finite range correlations}
\runtitle{Pinning with correlated disorder}

\author{\fnms{Julien} \snm{Poisat} \ead[label=e1]{poisat@math.univ-lyon1.fr}}
\address[label=e2]{Institut Camille Jordan\\ 43 bld du 11 novembre 1918 \\ 69622 Villeurbanne, France\\ Tel.: +33(0)472.44.79.41\\ \printead{e1}}
\affiliation{Universit\'e Lyon 1}

\runauthor{Julien Poisat}

\begin{abstract}
This paper focuses on directed polymers pinned at a disordered and correlated interface. We assume that the disorder sequence is a q-order moving average and show that the critical curve of the annealed model can be expressed in terms of the Perron-Frobenius eigenvalue of an explicit transfer matrix, which generalizes the annealed bound of the critical curve for i.i.d. disorder. We provide explicit values of the annealed critical curve for $q=1$,$2$ and a weak disorder asymptotic in the general case. Following the renewal theory approach of pinning, the processes arising in the study of the annealed model are particular Markov renewal processes. We consider the intersection of two replicas of this process to prove a result of disorder irrelevance (i.e. quenched and annealed critical curves as well as exponents coincide) via the method of second moment.
\end{abstract}

\begin{keyword}[class=AMS]
%\kwd[Primary ]{}
\kwd{82B44}
\kwd{60K37}
\kwd{60K05}
%\kwd[; secondary ]{}
\end{keyword}

\begin{keyword}
\kwd{Polymer models}
\kwd{Pinning}
\kwd{Annealed model}
\kwd{Disorder irrelevance}
\kwd{Correlated disorder}
\kwd{Renewal process}
\kwd{Markov renewal process}
\kwd{Intersection of renewal processes}
\kwd{Perron-Frobenius theory}
\kwd{subadditivity}
\end{keyword}

\end{frontmatter}

\section{Introduction}

Polymers are macromolecules which are modelized by self-avoiding or directed random walks. Take for instance $S=(S_n)_{n\geq0}$ a random walk on $\mathbb{Z}$ starting at 0 and such that $|S_{n+1}-S_n|\leq1$. By polymer of dimension 1+1 and size N we will mean a realization of the directed random walk $\left\{(n,S_n) \right\}_{0\leq n\leq N}$, where each segment $\left[(n,S_n), (n+1,S_{n+1}) \right]$ stands for a constitutive unit, called monomer.

Suppose now that a reward $h$ is given to a configuration $\left\{(n,S_n) \right\}_{0\leq n\leq N}$ each time it touches the interface, i.e. each time $S_n=0$. One can then consider a distribution on polymers of size N whose density with respect to the initial distribution is equal, up to a renormalizing constant, to the Boltzmann factor
\[\exp\left( h \times \card\{n\in\{1,\ldots,N\}|S_n=0\} \right).\]
Depending on the sign of $h$, this distribution favorizes or penalizes polymers pinned to the interface, and letting $N$ go to infinity, the model, called homogeneous pinning model, undergoes a localization/delocalization transition.

Pinning models can also be used to study the interaction between two polymers, since the difference of two random walks is still a random walk. One can think for example of the two complementary strands of a DNA molecule: in this case, the values of $n$ for which $S_n = 0$ are the sites where the two strands are pinned, and the delocalization transition corresponds to DNA denaturation (or melting). One could argue that the binding strength between the two strands actually depends on the base pair, which is A-T or G-C. This corresponds to looking at a disordered model, i.e. a model in which the reward is $n$-dependent. An assumption usually made is that the reward at site $n$ writes
\[h_n = h + \beta \omega_n\]
where $h\in\mathbb{R}$, $\beta \geq 0$ and $\omega=(\omega_n)_{n\geq0}$ is a frozen realization of a sequence of independent standard gaussian random variables. The space of parameters is then partitioned in localized and delocalized phases, separated by a critical curve $\beta \mapsto h_c(\beta)$. The presence of disorder has important consequences on the model. For example, one can show that there is localization for $h<0$ provided that disorder is strong enough (i.e. $\beta$ large enough). If we consider the annealed model (i.e. the model in which the Boltzmann factor is averaged over disorder), we have the following lower bound:
\begin{equation}\label{AB0}
 h_c(\beta) \geq -\log P(\tau_1 < +\infty) - \frac{\beta^2}{2}
\end{equation}
where $\tau_1$ is the first return time of $S$ to 0. In the last few years, many rigorous results were given on relevance of disorder, which in particular answer the following question: when is (\ref{AB0}) an equality? For these questions, as well as classical results on homogeneous and disordered pinning models, we refer to \cite{GG_Book}, \cite{GG_Overview}, \cite{Toninelli_Survey} and references therein.

In this paper we remove the independence assumption on $\omega$ and study the effect of correlations on the right-hand side of (\ref{AB0}), i.e on the annealed critical curve. This is partly motivated by the long-range correlations in DNA sequence, see \cite{Chen} and \cite{Peng} on this topic. We also mention \cite{UnzipDNA} and \cite{bubble} where the effect of sequence correlation is investigated, in somewhat different contexts. In \cite{UnzipDNA}, the authors study the effect of a pulling force applied to the extremity of a DNA strand on the number of broken base pairs (unzipping of DNA) in two correlated scenarii: integrable and nonintegrable correlations. In \cite{bubble}, the authors consider the effect of sequence correlation on the bubble size distribution: by bubbles we mean broken base pairs, and if we keep in mind the analogy with pinning models, it corresponds to the excursions of the directed random walk between two visits at $0$.

The disorder sequence in our model is a finite-order moving average of an i.i.d sequence, which is the simplest correlated sequence one can look at, and the reason for this choice will be clearer further in the text. This will be defined in Section \ref{model}, as well as the renewal sequence $\tau=(\tau_n)_{n\geq 0}$
(the contact points) and the polymer measures. In Section \ref{generalities}, we introduce classical notions for these models: the free energy, the phase diagram and the (quenched and annealed) critical curve of the model. In the proof of Theorem \ref{annealed_f}, a new homogeneous model emerges, whose hamiltonian does not only depends on the number of renewal points but also on their mutual distances. In Section \ref{results} we are interested in the annealed critical curve. The main results are Theorem \ref{annealed_critical_curve}, which states that the difference between the annealed critical curve in the correlated case and the annealed critical curve in the i.i.d. case can be expressed in terms of the Perron-Frobenius eigenvalue of an explicit transfer matrix, and Proposition \ref{asymptotic}, which gives a weak disorder asymptotic of the annealed critical curve. Note that the appearance of Perron-Frobenius eigenvalues is reminiscent of results on periodic copolymers, see \cite{CaGiZa07}. In a second part of the paper (Section \ref{irr_section}, Theorem \ref{result}), we show that under certain conditions (the same as i.i.d. disorder actually) quenched and critical curves (as well as exponents) coincide at high temperatures (small $\beta$). This is the regime of disorder irrelevance. We use the second moment method, which will lead us to study the exponential moments of two replicas of a certain Markov renewal process.

\section{The model}\label{model}
\subsection{Contact points between the polymer and the line}\label{def_renewal} We follow the renewal theory approach of pinning. Let $\tau$ be a discrete renewal process such that $\tau_0 = 0$ and $\tau_n = \sum_{k=1}^{n} T_k$, where the inter-arrival times (or jumps) $T_k$ are i.i.d. random variables taking values in $\overline{\mathbb{N}}^*$. Furthermore, $K(n) = P(T_1 = n) = \frac{L(n)}{n^{1+\alpha}}$ where $\alpha \geq 0$ and $L$ is a slowly varying function. Without losing in generality, we can assume that $\sum_{n\geq1} K(n) = 1$, i.e. $\tau$ is recurrent. We distinguish between positive recurrence ($\alpha >1$ or $\alpha =1$ and $L$ is such that $\sum_{n\geq1} L(n)/n < +\infty$) and null recurrence ($\alpha \in [0,1)$ or $\alpha =1$ and $L$ is such that $\sum_{n\geq1} L(n)/n = +\infty$). We will denote by $\delta_n$ the indicator of the event $\{n\in\tau\} = \bigcup_{k\geq0}\{\tau_k = n\}$ so that if $\imath_N := \sup \{k\geq0 | \tau_k \leq N\}$ is the number of renewal points before $N$, then $\imath_N = \sum_{n=1}^N \delta_n$. The letter $E$ will denote expectation with respect to the renewal process.

We also suppose that for all $n\geq1$, $K(n)>0$ (which implies aperiodicity). This assumption seems quite restrictive, but will be necessary in Section \ref{statement_result}. If this condition on $K$ were not fulfilled, we would simply have to reduce the state space of the matrices defined in Section \ref{results}  to $\{n\geq1 | K(n) > 0\}^q$ and to assume that $K$ is aperiodic.

\subsection{Finite range correlations}
Let $(\varepsilon_n)_{n\in\mathbb{Z}}$ be a collection of independent standard gaussian random variables (independent from $\tau$), $q\geq1$ a fixed integer, and $(a_0,\ldots,a_{q})\in \mathbb{R}^{q+1}$ such that $a_0^2+\ldots+a_{q}^2=1$. Define the disorder sequence $\omega = (\omega_n)_{n\geq0}$ by the $q$-order moving average $\omega_n = a_0 \varepsilon_n + \ldots + a_q \varepsilon_{n-q}$. Then $\omega$ is a stationary centered gaussian process and its covariance function $\rho_n := \cov(\omega_0,\omega_n)$ satisfies $\rho_0 = 1$ and $n>q \Rightarrow \rho_n = 0$. From now, the notations $\mathbb{P}$ and $\mathbb{E}$ will be associated to disorder.

\subsection{The quenched and annealed polymer measures}
We define the (constraint) quenched polymer measures, which depend on two parameters, the averaged pinning reward $h\in\mathbb{R}$ and the amplitude of disorder $\beta\geq0$:

\begin{equation}\label{def1}
\frac{dP_{N,\beta,h,\omega}}{dP} = \frac{1}{Z_{N,\beta,h,\omega}} \exp\left( \sum_{n=1}^N (\beta \omega_n + h) \delta_n \right) \delta_N
\end{equation}
where
\begin{equation}\label{def2}
Z_{N,\beta,h,\omega} = E\left( \exp\left( \sum_{n=1}^N (\beta \omega_n + h) \delta_n \right)\delta_N \right)
\end{equation}
is the partition function. We also define its annealed counterpart:
\begin{equation}\label{def3}
\frac{d(P\otimes \mathbb{P})_{N,\beta,h}}{d(P\otimes \mathbb{P})} = \frac{1}{Z^a_{N,\beta,h}} \exp\left( \sum_{n=1}^N (\beta \omega_n + h) \delta_n \right) \delta_N
\end{equation}
where
\[Z^a_{N,\beta,h} = \mathbb{E}Z_{N,\beta,h,\omega}. \]

\section{Generalities}\label{generalities}

\subsection{Free energy, phase diagram, and critical curve}

We give some results which are well-known for i.i.d. disorder, and which can be generalized to ergodic disorder (see \cite[Thm 4.6, p.96]{GG_Book}).

\begin{pr}\label{free_energy}
For all $h\in\mathbb{R}$ and all $\beta \geq 0$, there exists a nonnegative constant $F(\beta,h)$ such that, \[F(\beta,h) = \lim_{N\rightarrow +\infty} \frac{1}{N} \log Z_{N,\beta,h,\omega}\] $\mathbb{P}$-almost surely and in $L^1(\mathbb{P})$.
\end{pr}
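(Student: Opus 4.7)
The plan is to apply Kingman's subadditive (here: superadditive) ergodic theorem to the sequence $X_N(\omega) := \log Z_{N,\beta,h,\omega}$. The disorder $\omega$ is a measurable function of the i.i.d.\ Gaussian sequence $\varepsilon$, so the shift $\theta$ is measure-preserving and ergodic under $\mathbb{P}$. What remains is to establish superadditivity and the integrability required by Kingman's theorem, and then to promote almost-sure convergence to $L^1$.

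First I would prove the superadditivity estimate
\[
Z_{N+M,\beta,h,\omega} \;\geq\; Z_{N,\beta,h,\omega}\;Z_{M,\beta,h,\theta^N\omega}.
\]
The argument is purely renewal-theoretic: in the expectation defining $Z_{N+M,\beta,h,\omega}$ from (\ref{def2}), restrict to trajectories that visit $N$ (i.e.\ insert the extra indicator $\delta_N$), then apply the strong Markov property of $\tau$ at time $N$. Taking logs yields $X_{N+M}(\omega) \geq X_N(\omega) + X_M(\theta^N\omega)$.

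Next I would check the integrability hypotheses. For the lower bound, restrict the renewal expectation to the single trajectory $\{T_1 = N\}$ to obtain $Z_{N,\beta,h,\omega} \geq K(N)\,e^{\beta\omega_N + h}$, hence
\[
X_N \;\geq\; \log K(N) + \beta\omega_N + h.
\]
For the upper bound, use $\delta_n \in \{0,1\}$ to write $X_N \leq \sum_{n=1}^N (\beta\omega_n + h)^+$. The Gaussianity of $\omega$ together with $\log K(N) = -(1+\alpha)\log N + \log L(N)$ shows that $\mathbb{E}|X_1| < \infty$ and that $(X_N/N)_{N\geq 1}$ is bounded in $L^1$ and uniformly integrable. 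Kingman's theorem applied to $(-X_N)$ then gives, $\mathbb{P}$-almost surely,
\[
\frac{X_N}{N} \;\longrightarrow\; F(\beta,h) \;:=\; \sup_{N\geq 1}\frac{\mathbb{E}\,X_N}{N},
\]
and the uniform integrability upgrades this to convergence in $L^1(\mathbb{P})$.

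Finally, $F(\beta,h) \geq 0$ follows from the same one-contact lower bound: dividing by $N$ gives $X_N/N \geq \log K(N)/N + (\beta\omega_N + h)/N$, and both terms on the right tend to $0$ ($\omega_N/N \to 0$ a.s.\ by Borel--Cantelli and the Gaussian tails, and $\log K(N)/N \to 0$ by the polynomial/slowly varying form of $K$). The main obstacle is the $L^1$ convergence, which rests on the uniform integrability estimate above; the rest is standard once one has verified that the moving-average structure of $\omega$ does not break the ergodicity or the factorization of the partition function.
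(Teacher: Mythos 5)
Your proposal is correct and follows the same route as the paper: superadditivity of $\log Z_{N,\beta,h,\omega}$ via the renewal (Markov) property at the intermediate site, followed by Kingman's subadditive ergodic theorem. The one place you genuinely depart from the paper is in justifying ergodicity of the shift $\theta$ under $\mathbb{P}$. The paper invokes a theorem on stationary Gaussian processes (ergodicity follows from $\rho_n \to 0$, citing Cornfeld--Fomin--Sinai), which covers arbitrary Gaussian disorder with vanishing correlations. You instead observe that $\omega$ is a factor of the i.i.d.\ sequence $\varepsilon$ (a sliding-block code), hence the shift on $\omega$ inherits ergodicity from the shift on $\varepsilon$. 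Your argument is more elementary and exploits the finite-order moving-average structure directly, at the cost of not extending to more general correlated Gaussian environments; the paper's argument is the one that would survive if the $q$-dependence were later relaxed. You also spell out the integrability hypotheses of Kingman's theorem and the nonnegativity of $F$ (via the one-jump lower bound and $\omega_N/N \to 0$ a.s.\ from Gaussian tails and Borel--Cantelli), both of which the paper leaves implicit; these checks are sound.
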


\begin{proof}
We use the Markov property as in \cite[Prop 4.2, p.91]{GG_Book} or \cite[(3.1), p.12]{GG_Overview} to write \[\log Z^{\texttt{c}}_{N+M,\beta,h,\omega}\geq \log Z^{\texttt{c}}_{M,\beta,h,\omega} + \log Z^{\texttt{c}}_{N,\beta,h,\theta^M\omega}\]where $\theta$ is the shift operator. We then use Kingman's subadditive theorem (see \cite{Steele}). In our case, $\omega$ is ergodic because $\rho_n \stackrel{n\rightarrow \infty}{\longrightarrow} 0$ (see \cite[Chp 14, \S.2, Thm 2]{CFG}).
\end{proof}

The phase diagram $\mathbb{R}_{+}\times\mathbb{R}$ is then divided into a localized phase
\[\mathcal{L} = \left\{ (\beta,h) | F(\beta,h) > 0 \right\}\]
and a delocalized one
\[\mathcal{D} = \left\{ (\beta,h) | F(\beta,h) = 0 \right\}.\]
For all $\beta$, define the critical point $h_c(\beta):=\sup\{h\in\mathbb{R}|F(\beta,h)=0\}$. By convexity of $F$ (as the limit of convex functions), $\mathcal{D}$ is convex so the critical curve $\beta \mapsto h_c(\beta)$ is concave. Moreover, it is nonincreasing and $h_c(0)= 0$. For detailed arguments, we refer to \cite{GG_Book}.

\subsection{Annealed free energy and annealed bound}

The first difference that occurs when dealing with correlated disorder is that integrating on $\omega$ the Boltzmann factor does not yield a classical homogeneous model (see (\ref{variance}) below). As a consequence, we will need an additional argument to define the annealed free energy.

\begin{lem}[Hammersley's approximate subadditivity \cite{Hammersley}]\label{Hammersley}
Let $h : \mathbb{N} \mapsto \mathbb{R}$ be such that for all $n$, $m\geq1$,
\[h(n+m) \leq h(n) + h(m) + \Delta(n+m),\]
with $\Delta$ a non decreasing sequence satisfying:
\[\sum_{r=1}^{\infty} \frac{\Delta(r)}{r(r+1)} < \infty.\]
Then, $\displaystyle\lim_{n\rightarrow +\infty} \frac{h(n)}{n}$ exists and is finite.
\end{lem}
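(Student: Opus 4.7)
My plan is to adapt Fekete's subadditive lemma to this almost-subadditive setting, using the summability of $\Delta(r)/(r(r+1))$ together with the monotonicity of $\Delta$ to absorb the error on dyadic scales. The first observation is that $\Delta(n)/n \to 0$: since $\Delta$ is nondecreasing,
\[
\sum_{r=n}^{2n} \frac{\Delta(r)}{r(r+1)} \;\geq\; \Delta(n) \Bigl(\frac{1}{n} - \frac{1}{2n+1}\Bigr) \;\geq\; \frac{\Delta(n)}{3n},
\]
and the left-hand side is a tail of a convergent series.

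The main engine is a dyadic iteration. Applying the hypothesis with $n = m = 2^{j-1} n_0$ repeatedly and telescoping yields
\[
\frac{h(2^K n_0)}{2^K n_0} \;\leq\; \frac{h(n_0)}{n_0} + \sum_{j=1}^{K} \frac{\Delta(2^j n_0)}{2^j n_0}.
\]
Cauchy condensation, applied to the convergent series in the hypothesis, shows that $C_{n_0} := \sum_{j \geq 1} \Delta(2^j n_0)/(2^j n_0)$ is finite for each $n_0$, and as the tail of the condensed series $C_{n_0} \to 0$ when $n_0 \to \infty$.

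To handle arbitrary $n$, I would decompose $n = \sum_{i=1}^{\ell} 2^{K_i} n_0 + r$ using the binary expansion of $\lfloor n/n_0 \rfloor$, with $K_1 > \cdots > K_\ell \geq 0$ and $0 \leq r < n_0$, and iterate approximate subadditivity along the partial remainders $s_i := n - \sum_{j \leq i} 2^{K_j} n_0$. Dividing by $n$, the main term is a convex combination of the quantities $h(2^{K_i} n_0)/(2^{K_i} n_0) \leq h(n_0)/n_0 + C_{n_0}$, hence bounded by $h(n_0)/n_0 + C_{n_0}$; the boundary term $h(r)/n$ vanishes since $r < n_0$ is bounded. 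For the cumulative error $\frac{1}{n}\sum_{i} \Delta(s_i)$ from the $\ell$ applications of the hypothesis, I would assign each $\Delta(s_i)$ to the dyadic scale just above $s_i$ and use $n \geq 2^{K_1} n_0$ to rearrange the sum into a geometric-weighted series in $\Delta(2^j n_0)/(2^j n_0)$, bounded by $4 \sup_{m \geq n_0} \Delta(m)/m$, which tends to $0$ as $n_0 \to \infty$ by the preliminary estimate.

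Combining these bounds yields $\limsup_n h(n)/n \leq h(n_0)/n_0 + \varepsilon(n_0)$ with $\varepsilon(n_0) \to 0$; taking $\liminf$ over $n_0$ then gives $\limsup \leq \liminf$ and hence existence of the limit. Finiteness from above is built into the dyadic step; finiteness from below is not automatic from the hypothesis alone but holds in the intended application, where $h$ is a log-partition function with a natural linear lower bound. The main technical obstacle I anticipate is the cumulative-error estimate in the interpolation step: a naive bound produces a logarithmic factor times $\Delta(n)/n$ which need not vanish, so the crucial point is to match each $\Delta(s_i)$ to the correct dyadic scale and exploit the geometric rearrangement to get a bound uniform in $\ell$.
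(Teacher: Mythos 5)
The paper does not prove this lemma; it simply cites Hammersley \cite{Hammersley} and uses the statement as a black box, so there is no in-paper proof to compare against. Your dyadic argument is a correct, self-contained route to the result, in the spirit of the standard treatments of approximate subadditivity. In particular the cumulative-error estimate — the step you flag as the main obstacle — works out as you claim: with $s_i < 2^{K_{i+1}+1}n_0$ and $n \geq 2^{K_1}n_0$, one gets
\[
\frac{1}{n}\sum_i \Delta(s_i)\;\leq\;\sum_i\frac{\Delta(2^{K_i+1}n_0)}{2^{K_i+1}n_0}\cdot\frac{2^{K_i+1}}{2^{K_1}},
\]
and since the $K_i$ are distinct with $K_1$ the largest, the weights $2^{K_i+1-K_1}$ sum to less than $4$, giving the bound $4\sup_{m\geq n_0}\Delta(m)/m$ with no logarithmic loss. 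Two small points deserve tightening. First, several of your inequalities tacitly assume $\Delta\geq 0$ (for instance replacing $1/n$ by $1/(2^{K_1}n_0)$ in the error estimate, bounding partial sums $\sum_{j\leq K}\Delta(2^jn_0)/(2^jn_0)$ by $C_{n_0}$, and the Cauchy-condensation comparison); since $\Delta$ is nondecreasing one can either restrict to $n_0$ past the last negative value, or replace $\Delta$ by $\Delta_+=\max(\Delta,0)$, which is still nondecreasing with summable tail, but the reduction should be stated rather than left implicit. Second, you are right that a lower bound is not a consequence of the hypotheses alone: the correct general conclusion is that $\lim h(n)/n$ exists in $[-\infty,+\infty)$, and the phrase "and is finite" in the paper's statement implicitly relies on the linear lower bound available for the log-partition function in the application. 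These are presentation gaps, not mathematical ones.
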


\begin{theo}\label{annealed_f}
For all $h\in\mathbb{R}$ and all $\beta \geq 0$, there exists a nonnegative constant $F^a(\beta,h)$ such that, \[F^a(\beta,h) = \lim_{N\rightarrow+\infty} \frac{1}{N} \log Z_{N,\beta,h}^a.\] Moreover, if $h_c^a(\beta) := \sup\{h\in\mathbb{R}|F^a(\beta,h)=0\}$ then \begin{equation}\label{AB}h_c(\beta) \geq h_c^a(\beta).\end{equation}
\end{theo}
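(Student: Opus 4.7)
The approach combines a Gaussian integration, Hammersley's approximate subadditivity, and Jensen's inequality. First, Fubini together with the identity $\mathbb{E}[e^X] = e^{\var(X)/2}$ for a centered Gaussian $X$ (using $\rho_0 = 1$, $\rho_k = 0$ for $k > q$, and $\delta_n^2 = \delta_n$) rewrites
\begin{equation}\label{variance}
Z^a_{N,\beta,h} = E\!\left[\exp\!\left((h + \tfrac{\beta^2}{2})\imath_N + \beta^2 \sum_{k=1}^q \rho_k \sum_{m=1}^{N-k}\delta_m \delta_{m+k}\right)\delta_N\right].
\end{equation}
This is precisely the ``homogeneous model whose Hamiltonian depends on the renewals and their mutual distances'' alluded to in the remark preceding Lemma \ref{Hammersley}.

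Next, I would lower bound $Z^a_{N+M,\beta,h}$ by restricting the renewal trajectory to $\{\delta_N = 1\}$. On that event, $\imath_{N+M} = \imath_N + \sum_{n=N+1}^{N+M}\delta_n$, and the pairwise sum decomposes into contributions internal to $[1, N]$, internal to $[N+1, N+M]$, and cross-boundary pairs $(m, m+k)$ with $N-k < m \leq N < m+k$. The cross-boundary contribution to the exponent is uniformly (in the renewal trajectory) bounded in absolute value by $C := \beta^2 \sum_{k=1}^q k|\rho_k|$, since for each $k \in \{1, \ldots, q\}$ there are at most $k$ such pairs. The Markov property of $\tau$ at $N$ then factors the remaining expectation and yields
\[Z^a_{N+M,\beta,h} \;\geq\; e^{-C}\, Z^a_{N,\beta,h}\, Z^a_{M,\beta,h},\]
so that $f(n) := -\log Z^a_{n,\beta,h}$ satisfies the hypothesis of Lemma \ref{Hammersley} with the constant (hence non-decreasing and summable against $1/(r(r+1))$) error $\Delta \equiv C$.

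Applying Lemma \ref{Hammersley} yields the limit $F^a(\beta, h) := -\lim_N f(N)/N$, provided it is finite. This follows from crude two-sided bounds on $Z^a_{N,\beta,h}$: the upper bound $Z^a_{N,\beta,h} \leq \exp\!\bigl[(h^+ + \tfrac{\beta^2}{2} + \beta^2 q)N\bigr]$ (using $|\rho_k| \leq \rho_0 = 1$, $\delta_m\delta_{m+k} \leq \delta_m$, and $\imath_N \leq N$) shows $\liminf f(N)/N > -\infty$, while the single-jump bound $Z^a_{N,\beta,h} \geq K(N)\,e^{h + \beta^2/2}$ (restricting to $\tau_1 = N$) combined with $K(n) = L(n)/n^{1+\alpha}$ shows $\limsup f(N)/N \leq 0$; hence $F^a(\beta, h) \in [0, +\infty)$. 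For the annealed bound, Jensen's inequality gives $\log Z^a_{N,\beta,h} \geq \mathbb{E}\log Z_{N,\beta,h,\omega}$, so $F^a(\beta, h) \geq F(\beta, h) \geq 0$; thus $\{h : F^a(\beta, h) = 0\} \subseteq \{h : F(\beta, h) = 0\}$, and taking suprema gives $h_c^a(\beta) \leq h_c(\beta)$.

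The main obstacle is the approximate subadditivity step: unlike in the i.i.d.\ case, the annealed Hamiltonian in (\ref{variance}) does not decouple exactly at the cut $N$. It is precisely the finiteness of the correlation range $q$ together with Gaussianity that allows one to absorb all cross-boundary coupling into a single configuration-independent constant, which is why Hammersley's lemma (with its non-zero error $\Delta$) is the right tool rather than plain Fekete's lemma.
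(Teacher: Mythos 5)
Your proof is correct and follows essentially the same route as the paper's: Gaussian integration to rewrite $Z^a$, restricting to $\{\delta_N=1\}$ and decomposing the pairwise sum to get approximate superadditivity with a bounded cross-boundary term, Hammersley's lemma, and Jensen's inequality for the annealed bound. The only cosmetic difference is that you supply an explicit constant $C = \beta^2\sum_k k|\rho_k|$ and add a (technically redundant, since Hammersley's lemma already guarantees finiteness) two-sided crude-bound argument to pin down $F^a(\beta,h)\in[0,\infty)$, where the paper instead reads off nonnegativity from $F^a \geq F \geq 0$.
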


\begin{proof}
First, we compute the variance (with respect to $\omega$) of $\sum_{n=1}^N \omega_n \delta_n$. For every realization of $\tau$, we have:
\begin{equation}\label{variance}
\var\left(\sum_{n=1}^N \omega_n \delta_n\right) = \sum_{i, j =1}^N \cov(\omega_i,\omega_j)\delta_i \delta_j
= \sum_{n=1}^N \delta_n + 2 \sum_{1\leq i < j \leq N} \rho_{j-i} \delta_i \delta_j.
\end{equation}
Then, $$Z^a_{N,\beta,h} = E\left( \exp\left( (h+\frac{\beta^2}{2})\sum_{n=1}^N \delta_n + \beta^2 \sum_{i=1}^{N-1} \sum_{k=1}^{N-i} \rho_k \delta_i \delta_{i+k}\right) \delta_N \right).$$ Now, we want some sort of superadditivity for the annealed partition function. For a polymer of size $N+M$, observe that
\begin{align*}
\sum_{1\leq i < j \leq N+M} \rho_{j-i}\delta_i \delta_j = \sum_{1\leq i < j \leq N} \rho_{j-i}\delta_i \delta_j &+ \sum_{N+1\leq i < j \leq N+M} \rho_{j-i}\delta_i \delta_j\\
&+ \sum_{1\leq i \leq N < j \leq N+M} \rho_{j-i}\delta_i \delta_j.
\end{align*}
Conditioned on the event $\{N\in\tau\}$, the second term has the same law as $\sum_{1\leq i < j \leq M} \rho_{j-i}\delta_i \delta_j$. Moreover, the third term is greater than a constant $C$ only depending only $\rho$ and $q$. We can then write
\begin{align*}
& Z^a_{N+M,\beta,h} \\&\geq E\left( \exp\left( (h+\frac{\beta^2}{2})\sum_{n=1}^{N+M} \delta_n + \beta^2 \sum_{1\leq i < j \leq N+M} \rho_{j-i}\delta_i \delta_j \right) \delta_N \delta_{N+M} \right)\\
&\geq e^{C\beta^2} Z^a_{N,\beta,h} Z^a_{M,\beta,h}
\end{align*}
and we conclude by using Lemma \ref{Hammersley} to $-\log Z^a_{N,\beta,h}$ with $\Delta(n) = - C\beta^2$. As in \cite[Prop 5.1]{GG_Book} we use Jensen's inequality to prove that \begin{equation}\label{ann_bound}F(\beta,h)\leq F^a(\beta,h),\end{equation} which in turn yields the annealed bound (\ref{AB}).
\end{proof}

When disorder is i.i.d, ($\ref{AB}$) becomes $h_c(\beta)\geq h_c(0) - \beta^2/2 := h_c^a(\beta)$ and the question of knowing whether this is an equality was studied in several papers and monographs (for example, \cite{GG_Overview}, \cite{Toninelli_Survey}, \cite{GG_Book} and references therein) where we learn that the answer depends on the values of $\alpha$ and $\beta$.

In the next subsection, the effect of correlations on $h_c^a$ will be studied.

\section{The annealed critical curve}\label{results}

\subsection{The result for $q=1$ and the reason why the technique used does not apply to $q>1$}

\begin{pr}\label{q_equals_1}
If $q=1$ then we have \[h_c^a(\beta) = h_c(0) - \frac{\beta^2}{2} - \log\left( 1 + K(1)\left( e^{\rho_1 \beta^2}-1 \right)\right)\]
\end{pr}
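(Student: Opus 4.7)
The plan is to recast the $q=1$ annealed partition function as a convolution of $K$ with the partition function of a homogeneous pinning model having a modified inter-arrival distribution, and then apply the standard renewal criterion. Specialising the expression of $Z^a_{N,\beta,h}$ derived in the proof of Theorem~\ref{annealed_f}: since $\rho_k=0$ for $k\geq 2$, only the nearest-neighbour sum $\sum_{i=1}^{N-1}\delta_i\delta_{i+1}$ survives in the correlation part. On $\{N\in\tau\}$, with contact points $0=\tau_0<\tau_1<\ldots<\tau_{\imath_N}=N$, this sum equals the number of jumps of size one among $T_2,\ldots,T_{\imath_N}$ (the first jump $T_1$ does not contribute, since the index $i$ starts at $1$).

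Expanding the expectation over $\tau$ as a sum over contact sequences and setting $g(1):=e^{\beta^2\rho_1}$, $g(n):=1$ for $n\geq 2$, each summand factorises as
\[
e^{k(h+\beta^2/2)}\,K(t_1)\prod_{j=2}^{k}K(t_j-t_{j-1})\,g(t_j-t_{j-1}).
\]
Introducing the modified inter-arrival $\tilde K(n):=K(n)g(n)$, of total mass $\lambda := 1 + K(1)\bigl(e^{\beta^2\rho_1}-1\bigr)$, the probability law $\hat K := \tilde K/\lambda$, and $\mu := \lambda\,e^{h+\beta^2/2}$, and absorbing $\lambda^{k-1}$ into $\mu^{k-1}$, one obtains
\[
Z^a_{N,\beta,h} = e^{h+\beta^2/2}\sum_{t=1}^{N}K(t)\,\hat Z_{N-t}(\mu),
\]
where $\hat Z_{M}(\mu):=\sum_{k\geq 0}\mu^{k}P(\hat\tau_k=M)$ is the partition function of the homogeneous pinning model with inter-arrival law $\hat K$ (denoted $\hat\tau$) and chemical potential $\log\mu$.

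The free energy of this auxiliary model is classical: $\hat F(\mu)\geq 0$ is the unique solution of $\mu\sum_n\hat K(n)e^{-\hat F(\mu)n}=1$ when $\mu>1$, and $\hat F(\mu)=0$ when $\mu\leq 1$. Since $K$ has polynomial tails, the convolution displayed above yields $Z^a_N$ asymptotically like $\hat Z_N(\mu)$ up to a subexponential prefactor, so $F^a(\beta,h)=\hat F(\mu)$. Consequently $F^a(\beta,h)=0$ iff $\mu\leq 1$, i.e.\ $h\leq -\beta^2/2-\log\lambda$, and since $h_c(0)=0$ in the recurrent setting one recovers the announced formula.

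The main obstacle I anticipate is bookkeeping rather than analysis: cleanly identifying the correlation sum as a functional of $T_2,\ldots,T_{\imath_N}$ only (with no $T_1$ contribution), and checking that the first-jump mismatch between $K$ and $\hat K$ in the convolution produces only a subexponential factor, so that the annealed free energy genuinely coincides with $\hat F(\mu)$. Once this reduction is in place, the conclusion follows directly from the standard renewal criterion for homogeneous pinning, and the generalisation to $q>1$ fails precisely because more general $g$ introduces memory that cannot be absorbed into a scalar modification of $K$.
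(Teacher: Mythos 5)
Your proof is correct and follows essentially the same strategy as the paper's: absorb the nearest-neighbour correlation factor into a modified jump kernel (your $\tilde K$, the paper's $K_{(q=1)}$ with $K_{(q=1)}(1)=e^{\rho_1\beta^2}K(1)$) and apply the standard homogeneous-pinning renewal criterion. Your explicit bookkeeping of the first jump $T_1$---which escapes the correction because the index $i$ in $\sum_{i=1}^{N-1}\delta_i\delta_{i+1}$ starts at $1$, producing a convolution with the original $K$ and only a subexponential prefactor---is a boundary detail that the paper leaves implicit in its reference to ``a slight modification'' of the classical argument.
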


\begin{proof} If $q=1$, equality (\ref{variance}) gives: \[Z_{N,\beta,h}^a = E\left( \exp\left( (h+\frac{\beta^2}{2})\imath_N + \rho_1\beta^2 \sum_{n=1}^{N-1} \delta_n \delta_{n+1}\right)\delta_N \right).\] The energetic contribution of a jump can only take two values: $h+(2\rho_1+1)\beta^2/2 $ if the jump has size 1 and $h+\beta^2/2$ otherwise. The rest of the proof is a slight modification of the proof of \cite[Prop 1.1]{GG_Book}, except we must consider $K_{(q=1)}$ with $K_{(q=1)}(1) := e^{\rho_1 \beta^2}K(1)$ and $K_{(q=1)}(n) := K(n)$ if $n>1$.
\end{proof}

If $q\geq2$, the situation is more complicated because in this case we must consider the energetic contribution of a q-tuple of jumps instead of one of a single jump. For example, if $q=2$, the energetic contribution of a jump of size 1 can be $h+(1+2\rho_1)\beta^2/2$ or $h+(1+2\rho_1+ 2\rho_2)\beta^2/2$, depending on the value of the jump just before. This idea of looking at the sequence of q-tuples of consecutive inter-arrival times is developed in the next section.

\subsection{An auxiliary Markov chain and the transfer matrix}\label{statement_result}

From now we assume $q\geq2$. We will denote by $\overline{t} = (t_1,\ldots,t_q)$ a $q$-tuple in $(\mathbb{N}^*)^q$ and if $(t_n)_{n\geq1}$ is a sequence of integers, then $\overline{t}_n := (t_n,\ldots,t_{n+q-1})$. The projection on the first coordinate $\overline{t}\mapsto t_1$ will be denoted by $\pi_1$. Let $G$ be a function defined on such $q$-tuples by $G(\overline{t}) = \sum_{k=1}^q \rho_{t_1+\ldots+t_k} $, and which should be interpreted like this: if $\overline{t}$ is the q-tuple of the inter-arrival times of $q+1$ consecutive renewal points on the interface, then $G(\overline{t})$ gives the total contribution of correlations between disorder at theses points.

Notice that when we compute the value of $G$ for some $q$-tuple of inter-arrival times, any inter arrival time strictly greater than $q$ "does not count". To put it more precisely, we can consider a "cemetery state", denoted by $\star$, and define for all $t\in\mathbb{N}^*$ and $\overline{t}\in(\mathbb{N}^*)^q$, $t^* := t \mathbf{1}_{\{t\leq q\}} + \star \mathbf{1}_{\{t > q\}}$ and $\overline{t}^* = (t_1^*,\ldots,t_q^*)$. Then $G$ can be considered as a function of $\overline{t}^*$ instead of $\overline{t}$, if we adopt the following natural conventions: $\rho_{\star} = 0$ and for all $t \in \{1,\ldots,q,\star\}$, $\star + t = t + \star = \star$. From now we will use the following notations: $E = \{1,\ldots,q,\star\}$ and $K(\star) = \sum_{n>q} K(n)$.

In the following we will write $\overline{s} \rightsquigarrow \overline{t}$ (resp. $\overline{s}^* \rightsquigarrow \overline{t}^*$) if for all $i\in\{2,\ldots,q\}$, $s_i = t_{i-1}$ (resp. $s_i^* = t_{i-1}^*$). We now make the following remark: the sequence of $q$-tuples $(\overline{T}_n)_{n\geq1}$ is a Markov chain on a countable state space, and its transition probability from state $\overline{s}=(s_1,\ldots,s_q)$ to state $\overline{t}=(t_1,\ldots,t_q)$ writes \[Q(\overline{s},\overline{t}) := K(t_q) \mathbf{1}_{\{\overline{s} \rightsquigarrow \overline{t}\}}.\] Note that $Q$ is irreducible because of the positiveness of the $K(n)$'s. We now define the nonnegative matrices $Q_{\beta}$ and $Q^*_{\beta}$,which will play the role of transfer matrices, by
\[Q_{\beta}(\overline{s},\overline{t}) = e^{\beta^2 G(\overline{t})}K(t_q) \mathbf{1}_{\{\overline{s} \rightsquigarrow \overline{t}\}}\]
and
\[Q^*_{\beta}(\overline{s}^*,\overline{t}^*) = e^{\beta^2 G(\overline{t}^*)}K(t_q^*) \mathbf{1}_{\{\overline{s}^* \rightsquigarrow \overline{t}^*\}}.\]

We will write $Q^*$ instead of $Q^*_0$. Since $Q^*_{\beta}$ is an irreducible nonnegative matrix on the finite state space $E^q$, we know by the Perron-Frobenius theorem that there exists a Perron-Frobenius eigenvalue $\lambda(\beta)$ and an associated right eigenvector $\nu_{\beta}^* = (\nu_{\beta}^*(x))_{x\in E^q}$ with positive components (see \cite{Seneta}).

\subsection{Statement of the results}

We are now ready to state our main results. The first one expresses the annealed critical curve in terms of the Perron-Frobenius eigenvalue of the transfer matrix $Q^*_{\beta}$.

\begin{theo}\label{annealed_critical_curve}
For all $\alpha\geq0$, for all $\beta\geq0$, \[h_c^a(\beta) =  - \frac{\beta^2}{2} - \log \lambda(\beta).\]
\end{theo}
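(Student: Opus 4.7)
My approach is to express the annealed partition function via the transfer matrix $Q^*_\beta$, then extract the free energy using Perron--Frobenius theory. The main heuristic is that the effective ``reward per renewal point'' is $e^{h+\beta^2/2}\lambda(\beta)$, and the annealed free energy is strictly positive iff this reward exceeds $1$.

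Starting from the expression for $Z^a_{N,\beta,h}$ derived in the proof of Theorem~\ref{annealed_f}, I would decompose over the number $k$ of renewal points and the inter-arrival sequence $(t_1,\ldots,t_k)$ with $\sum t_i = N$. The finite range $\rho_n=0$ for $n>q$ together with the $\star$-convention introduced in Section~\ref{statement_result} allow the correlation double sum to be rewritten as $\sum_{j=2}^{k} G(\overline{t}^*_j)$, where $\overline{t}^*_j = (t_j^*,\ldots,t_{j+q-1}^*)$. Each interior factor $K(t_j)\,e^{\beta^2 G(\overline{t}_j^*)}$ then coincides with an entry of $Q^*_\beta$, supplemented by a factor $e^{h+\beta^2/2}$ per renewal.

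Next, I would introduce the generating function $\Phi(y) = \sum_{N\ge 1} y^N Z^a_{N,\beta,h}$ and the weighted matrix $M_\beta(y)$ on $E^q$ obtained from $Q^*_\beta$ by replacing $K(n)$ with $y^n K(n)$ for $n\le q$ and $K(\star)$ with $\sum_{n>q} y^n K(n)$; in particular $M_\beta(1)=Q^*_\beta$. After encoding the initial $q$-tuple, the endpoint constraint $\delta_N$ and the right-edge $\star$-padding into boundary vectors $L_y, R_y$, one obtains
\[
\Phi(y) \;=\; e^{h+\beta^2/2}\,\sum_{k\ge 0} e^{(h+\beta^2/2)k}\, \bigl\langle L_y,\, M_\beta(y)^k\, R_y\bigr\rangle.
\]
Perron--Frobenius applied to the irreducible nonnegative finite matrix $M_\beta(y)$ yields a simple dominant eigenvalue $\Lambda(\beta,y)$, continuous and strictly increasing in $y$ on $(0,1]$, with $\Lambda(\beta,1)=\lambda(\beta)$. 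The series converges iff $e^{h+\beta^2/2}\Lambda(\beta,y)<1$, so its radius of convergence $y^*$ equals $1$ when $e^{h+\beta^2/2}\lambda(\beta)\le 1$ and solves $e^{h+\beta^2/2}\Lambda(\beta, y^*)=1$ otherwise. Since $F^a(\beta,h)=\max(0,-\log y^*)$, the annealed free energy is strictly positive iff $e^{h+\beta^2/2}\lambda(\beta)>1$, and the critical curve is the locus $e^{h+\beta^2/2}\lambda(\beta)=1$, giving $h_c^a(\beta)=-\tfrac{\beta^2}{2}-\log\lambda(\beta)$.

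The main technical obstacle is the bookkeeping for the boundary vectors $L_y, R_y$ (initial state, endpoint constraint, $\star$-padding). Irreducibility of $Q^*_\beta$ and strict positivity of the Perron eigenvector $\nu_\beta^*$ should ensure that these vectors have positive projections on $\nu_\beta^*$ and remain bounded uniformly in $k$, so that $\langle L_y, M_\beta(y)^k R_y\rangle$ really grows at rate $\Lambda(\beta,y)^k$; nevertheless one must verify this carefully and establish enough regularity of $\Lambda(\beta,\cdot)$ at $y=1$ to identify the transition point cleanly.
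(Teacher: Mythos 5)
Your proof is correct in substance, and while it uses the same underlying transfer-matrix object, it packages the argument differently from the paper. You work with the generating function $\Phi(y)=\sum_N y^N Z^a_{N,\beta,h}$, introduce the $y$-tilted matrix $M_\beta(y)$ on $E^q$, and identify $F^a = -\log y^*$ with $y^*$ the radius of convergence via Cauchy--Hadamard (using Theorem~\ref{annealed_f} to replace $\limsup$ by $\lim$); the critical curve then falls out of the equation $e^{h+\beta^2/2}\Lambda(\beta,y^*)=1$ together with strict monotonicity and continuity of $\Lambda(\beta,\cdot)$ on $(0,1]$. The paper proceeds instead by a Doob $h$-transform: it builds a genuine Markov renewal law $P_\beta$ (Lemmas~\ref{new_transition},~\ref{mu_invariant}), reduces $F^a$ to the homogeneous free energy of $\tau$ under $P_\beta$ (Lemma~\ref{lemma_tilde}), introduces the $F$-tilted kernel $\tilde{Q}^*_{\beta,F}$, locates $F_\beta(\epsilon)$ as the root of $\Lambda(\beta,F)=e^{-\epsilon}$ (Lemma~\ref{free_energy_caract}), and then sandwiches $E_\beta(e^{\epsilon\imath_N}\delta_N)$ between multiples of $e^{F_\beta(\epsilon)N}P^{(F)}(N\in\tau)$ and closes with a renewal theorem for $P^{(F)}$ (Lemmas~\ref{pinning_F},~\ref{free_energy_tilde}). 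Up to the substitution $y=e^{-F}$ and a diagonal conjugation by the Perron eigenvector $\nu^*_\beta$, your $M_\beta(y)$ \emph{is} $\lambda(\beta)\,\tilde Q^*_{\beta,F}$, so the two-variable eigenvalue $\Lambda(\beta,y)$ you use is the same quantity as the paper's $\lambda(\beta)\Lambda(\beta,F)$; the algebra is identical. What your route buys is economy: you never need the invariant measures $\mu_\beta$, $l$, nor the renewal theorem for $P^{(F)}$, since Cauchy--Hadamard plus Theorem~\ref{annealed_f} and nonnegativity of $F^a$ already pin down $y^*\le 1$ and hence $F^a=-\log y^*$. What the paper's longer route buys is infrastructure: the explicit Markov renewal process $P_\beta$ and the characterization $F^a(\beta,h_c^a(\beta)+\epsilon)=F_\beta(\epsilon)$ as a root of $\Lambda(\beta,F)=e^{-\epsilon}$ are reused heavily in Section~\ref{irr_section} (e.g.\ Proposition~\ref{ann_expo} and the two-replica analysis), so the authors would have had to construct them anyway. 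Your one acknowledged gap --- control of the boundary vectors $L_y,R_y$ so that $\langle L_y,M_\beta(y)^k R_y\rangle \asymp \Lambda(\beta,y)^k$ uniformly on compact $y$-intervals, and the handling of the $O(1)$ boundary terms in rewriting the double correlation sum as $\sum_j G(\overline t_j^*)$ (the content of the paper's Lemma~\ref{lemma_G}) --- is routine but must be carried out; once it is, the argument is complete.
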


It seems difficult to give a nice explicit expression of $\lambda(\beta)$, since it is the Perron-Frobenius eigenvalue of a matrix of size $(q+1)^q$. For $q=2$, we have computed
\begin{equation*}
 h_c^a(\beta)= -\frac{\beta^2}{2} - \log \phi(\beta) - \log\left( \frac{1 + \sqrt{1 - \frac{\psi(\beta)}{\phi(\beta)^2}}}{2}\right)
\end{equation*}
where 
\begin{align*}
 \phi(\beta) &= 1 + K(1)(e^{(\rho_1+ \rho_2) \beta^2}-1) + K(2)(e^{\rho_2\beta^2}-1)\\
 \psi(\beta) & = 4 K(1)(1-K(1))e^{\rho_1 \beta^2}(e^{\rho_2 \beta^2}-1) \left(1 + \frac{K(2)}{1-K(1)}(e^{\rho_2 \beta^2}-1)\right).
\end{align*}

In the general case, the asymptotic behaviour of the annealed critical curve for weak disorder can be explicited:

\begin{pr}\label{asymptotic} We have \[h_c^a(\beta) \stackrel{\beta \rightarrow 0}{\sim} -\left(1 + 2 \sum_{n=1}^q \rho_n P(n\in\tau) \right)\frac{\beta^2}{2}.\]
\end{pr}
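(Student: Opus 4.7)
By Theorem \ref{annealed_critical_curve}, it suffices to show that $\log \lambda(\beta) = \beta^2 \sum_{n=1}^q \rho_n P(n\in\tau) + o(\beta^2)$ as $\beta\to 0$; indeed $h_c^a(\beta) = -\beta^2/2 - \log\lambda(\beta)$, so combining this expansion with the $-\beta^2/2$ term gives exactly the claimed equivalent. The plan is to do a first-order perturbation of the Perron-Frobenius eigenvalue of $Q_\beta^*$ around $\beta=0$.

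The starting point is to identify $Q^*=Q_0^*$ as the transition matrix of the Markov chain $(\overline T_n^*)$ on $E^q$: it is stochastic, hence $\lambda(0)=1$ with right eigenvector the constant vector $\mathbf 1$, and the left eigenvector normalised as a probability is the stationary law $\pi$. Since the $T_n^*$ are i.i.d. with common law $K$ on $E$ (using the paper's convention $K(\star)=\sum_{n>q}K(n)$), this stationary measure is the product $\pi(\overline t^*)=\prod_{i=1}^q K(t_i^*)$.

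Because $Q^*_\beta$ is a finite matrix whose entries are analytic in $\beta^2$ and $\lambda(0)=1$ is simple (irreducibility of $Q^*$ plus Perron-Frobenius gives isolatedness), standard analytic perturbation theory yields
$$\lambda(\beta) - 1 = \beta^2\,\langle \pi, M\mathbf 1\rangle + o(\beta^2), \qquad M(\overline s^*,\overline t^*) := G(\overline t^*)\,K(t_q^*)\,\mathbf 1_{\{\overline s^*\rightsquigarrow \overline t^*\}}.$$
The remainder is an algebraic computation: the relation $\overline s^*\rightsquigarrow \overline t^*$ fixes $s_i^* = t_{i-1}^*$ for $i\ge 2$ and leaves $s_1^*$ free, so summing $\pi(\overline s^*)$ against the indicator collapses to $\prod_{i=1}^{q-1} K(t_i^*)$, and multiplying by the remaining $K(t_q^*)$ turns the whole expression into $\mathbb E[G(\overline T)]$ for $\overline T=(T_1,\ldots,T_q)$ with i.i.d. components of law $K$.

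Finally, since $\rho_n=0$ for $n>q$ and $T_1+\cdots+T_k$ has the same law as $\tau_k$,
$$\mathbb E[G(\overline T)] = \sum_{k=1}^q\mathbb E[\rho_{\tau_k}] = \sum_{n=1}^q \rho_n \sum_{k\ge 1} P(\tau_k=n) = \sum_{n=1}^q \rho_n\, P(n\in\tau),$$
and $\log(1+x)=x+o(x)$ then completes the asymptotic. The only potentially delicate point is the invocation of first-order perturbation theory for $\lambda(\beta)$, but in finite dimension with an isolated simple eigenvalue this is entirely routine, so I do not expect a genuine obstacle.
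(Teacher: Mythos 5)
Your proof is correct and a bit more streamlined than the paper's. Both arguments ultimately compute the same quantity — the bilinear form $\pi^T M\mathbf{1}$ where $\pi = K^{\otimes q}$ is the stationary law of $Q^*$ and $M = \partial_{\beta^2}Q^*_{\beta}|_{\beta=0}$ — but you invoke the standard first-order perturbation formula for a simple eigenvalue directly (with the known left eigenvector), whereas the paper reproves this from scratch via the characteristic polynomial, the differential of the determinant, and a cofactor identity (Lemma~\ref{stat_measure2}), and then computes the final sum by induction on $q$, splitting $G$ into the $\rho_q$-part and the $(\rho_1,\dots,\rho_{q-1})$-part. Your direct Fubini computation of $\mathbb{E}[G(\overline T)] = \sum_{k=1}^q \mathbb{E}[\rho_{\tau_k}] = \sum_{n=1}^q \rho_n P(n\in\tau)$ dispenses with the induction entirely, which is cleaner; the minor justification you glossed over, namely that $\sum_{k\geq1}P(\tau_k=n)=P(n\in\tau)$ with only the first $n\leq q$ terms contributing, is immediate since $\tau_k\geq k$. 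The paper's detour through cofactors is not without purpose, however: the fact that ${}^t\!\com(\Id - Q^*)$ has constant columns equal to the left eigenvector is reused later in the proof of Proposition~\ref{Fourier}, so the machinery amortizes across the paper even if it is overkill for this single proposition.
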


Before going into details, we outline the proof of Theorem \ref{annealed_critical_curve}. First, we introduce in Lemma \ref{new_transition} new Markov transition kernels built from the transfer matrices and an eigenvector associated to $\lambda(\beta)$. From these we give a new law for the sequence of q-tuples of consecutive inter-arrival times, to which we associate what could be called a ``q-correlated'' renewal process. This process is in fact a particular Markov renewal process (these are processes in which the return times are not necessarily i.i.d., but driven by a Markov chain, see \cite{Asmussen} on this subject). With Lemma \ref{lemma_tilde}, we link the annealed free energy of our initial model to the homogeneous free energy of the new ``q-correlated'' renewal process. This will be the starting point of the proof of Theorem \ref{annealed_critical_curve}. Note that for positive recurrent renewal processes we give a shorter proof than in the general case.

\subsection{A "q-correlated" renewal process related to the model}

For all q-tuples $\overline{t}$, define $\nu_{\beta}(\overline{t}) = \nu_{\beta}^*(\overline{t}^*)$.

\begin{lem}\label{new_transition}
$\tilde{Q}_{\beta}(\overline{s},\overline{t}) := \frac{Q_{\beta}(\overline{s},\overline{t})\nu_{\beta}(\overline{t})}{\lambda(\beta)\nu_{\beta}(\overline{s})}$ and $\tilde{Q}^*_{\beta}(\overline{s}^*,\overline{t}^*) := \frac{Q_{\beta}^*(\overline{s}^*,\overline{t}^*)\nu_{\beta}^*(\overline{t}^*)}{\lambda(\beta)\nu_{\beta}^*(\overline{s}^*)}$ are Markov transition kernels.
\end{lem}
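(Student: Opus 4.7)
The plan is to verify the two defining properties of a Markov transition kernel: nonnegativity of the entries and rows summing to one. Nonnegativity is automatic in both cases: all entries of $Q_\beta$, $Q^*_\beta$ are nonnegative by construction; the Perron--Frobenius eigenvalue $\lambda(\beta)$ is strictly positive (since $Q^*_\beta$ is irreducible and nonnegative on the finite state space $E^q$), and the coordinates of $\nu^*_\beta$ are all strictly positive by the Perron--Frobenius theorem. So the real work is in showing that each row sums to one.

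For $\tilde{Q}^*_\beta$, the row sum identity is essentially a restatement of the eigenvector equation $Q^*_\beta \nu^*_\beta = \lambda(\beta)\nu^*_\beta$: summing $Q^*_\beta(\overline{s}^*,\overline{t}^*)\nu^*_\beta(\overline{t}^*)$ over $\overline{t}^*\in E^q$ yields $\lambda(\beta)\nu^*_\beta(\overline{s}^*)$, and dividing by $\lambda(\beta)\nu^*_\beta(\overline{s}^*)$ gives $\sum_{\overline{t}^*}\tilde{Q}^*_\beta(\overline{s}^*,\overline{t}^*)=1$. This step is essentially immediate.

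For $\tilde{Q}_\beta$, which lives on the countable state space $(\mathbb{N}^*)^q$, the task is to show
\[\sum_{\overline{t}\in(\mathbb{N}^*)^q} Q_\beta(\overline{s},\overline{t})\nu_\beta(\overline{t}) = \lambda(\beta)\nu_\beta(\overline{s}).\]
The first observation is that the indicator $\mathbf{1}_{\overline{s}\rightsquigarrow\overline{t}}$ forces $\overline{t}=(s_2,\ldots,s_q,t_q)$, so only $t_q\in\mathbb{N}^*$ is free. Next I would verify the compatibility identity $G(\overline{t})=G(\overline{t}^*)$ under the conventions $\rho_\star=0$ and $\star+t=\star$: indeed $\rho_n=0$ for $n>q$, so any partial sum $t_1+\dots+t_k$ that either exceeds $q$ or involves a coordinate starred-out gives $\rho=0$ in both formulations. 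Since $\nu_\beta(\overline{t})=\nu^*_\beta(\overline{t}^*)$ by definition, the summand depends on $t_q$ only through $t_q^*$. I would then split the sum over $t_q\in\mathbb{N}^*$ according to whether $t_q\le q$ (where $t_q^*=t_q$) or $t_q>q$ (where $t_q^*=\star$). The second range collapses into a single term because the summand is constant in $t_q$ there, and the factor $\sum_{t_q>q}K(t_q)$ is precisely $K(\star)$. What remains is exactly
\[\sum_{t_q^*\in E} e^{\beta^2 G((s_2^*,\ldots,s_q^*,t_q^*))}K(t_q^*)\,\nu^*_\beta\bigl((s_2^*,\ldots,s_q^*,t_q^*)\bigr)=(Q^*_\beta\nu^*_\beta)(\overline{s}^*)=\lambda(\beta)\nu^*_\beta(\overline{s}^*),\]
which equals $\lambda(\beta)\nu_\beta(\overline{s})$, as desired.

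The main obstacle, such as there is one, is bookkeeping: one has to check carefully that the star convention really does identify $G(\overline{t})$ with $G(\overline{t}^*)$ in every case (including when some intermediate partial sum of non-star entries exceeds $q$), and that the summation over the fiber above $\overline{s}^*$ correctly aggregates via $K(\star)=\sum_{n>q}K(n)$. Once these identifications are in place, the row-sum identity for $\tilde{Q}_\beta$ reduces to the Perron--Frobenius identity for $\tilde{Q}^*_\beta$, and the lemma follows.
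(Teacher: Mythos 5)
Your proof is correct and follows essentially the same route as the paper: for $\tilde Q^*_\beta$ it is the eigenvector equation, and for $\tilde Q_\beta$ it reduces the row-sum over $(\mathbb{N}^*)^q$ to that over $E^q$ by grouping $t_q>q$ into the cemetery state via $K(\star)=\sum_{n>q}K(n)$, using $G(\overline t)=G(\overline t^*)$ and $\nu_\beta(\overline t)=\nu^*_\beta(\overline t^*)$. The only difference is that you spell out the verification of $G(\overline t)=G(\overline t^*)$, which the paper uses silently.
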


\begin{proof}
 For $\tilde{Q}_{\beta}^*$, the result is a direct consequence of the relation $Q_{\beta}^*\nu_{\beta}^* = \lambda(\beta)\nu_{\beta}^*$ and of the positiveness of $\lambda(\beta)$ and $\nu_{\beta}$. For  $\tilde{Q}_{\beta}$, we write for all $\overline{s} = (s_1,\ldots,s_q)$,
\begin{align*}
 \sum_{\overline{t}}Q_{\beta}(\overline{s},\overline{t})\nu_{\beta}(\overline{t})  &= \sum_{t\geq 1} e^{\beta^2 G(s_2,\ldots,s_a,t)}K(t)\nu_{\beta}(s_2,\ldots,s_q,t) \\
& = \sum_{t\geq 1} e^{\beta^2 G(s_2^*,\ldots,s_q^*,t^*)}K(t)\nu_{\beta}^*(s_2^*,\ldots,s_q^*,t^*)\\
& = \sum_{t^* \in E} e^{\beta^2 G(s_2^*,\ldots,s_q^*,t^*)}K(t^*)\nu_{\beta}^*(s_2^*,\ldots,s_q^*,t^*)\\
& = \lambda(\beta) \nu_{\beta}^*(\overline{s}^*) \\
& = \lambda(\beta) \nu_{\beta}(\overline{s}).
\end{align*}
The result follows in the same way as for $\tilde{Q}_{\beta}^*$.
\end{proof}

Since $\tilde{Q}^*_{\beta}$ is a finite irreducible transition matrix (it has the same incidence matrix as $Q^*_{\beta}$, which is irreducible), it has a unique invariant probability measure that we denote by $\mu_{\beta}^*$. If we define $\mu_{\beta}$ a measure on $(\mathbb{N}^*)^q$ by $\mu_{\beta}(\overline{t}) = \frac{K(t_0)}{K(t_0^*)}\ldots\frac{K(t_{q-1})}{K(t_{q-1}^*)}\mu_{\beta}^*(\overline{t}^*)$, then

\begin{lem}\label{mu_invariant}
$\mu_{\beta}$ is the invariant probability of $\tilde{Q}_{\beta}$.
\end{lem}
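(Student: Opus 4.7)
The statement has two parts: $\mu_\beta$ is a probability measure on $(\mathbb{N}^*)^q$, and $\mu_\beta \tilde{Q}_\beta = \mu_\beta$. Both rest on the following elementary identity: for any $f : E \to \mathbb{R}$,
\[\sum_{t \in \mathbb{N}^*} \frac{K(t)}{K(t^*)}\, f(t^*) = \sum_{t^* \in E} f(t^*).\]
This holds because on each fiber $\{t \in \mathbb{N}^* : t^* = x\}$ the weights $K(t)/K(t^*)$ sum to $1$: trivially when $x \in \{1,\ldots,q\}$ (only $t = x$ contributes), and by the very definition $K(\star) = \sum_{n > q} K(n)$ when $x = \star$. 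Applying the identity coordinate by coordinate to the formula defining $\mu_\beta$ immediately yields $\sum_{\overline{t}} \mu_\beta(\overline{t}) = \sum_{\overline{t}^* \in E^q} \mu_\beta^*(\overline{t}^*) = 1$, so $\mu_\beta$ is a probability.

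For invariance, I would first factor $\tilde{Q}_\beta$ through $\tilde{Q}_\beta^*$. Since $G(\overline{t})$ and $\nu_\beta(\overline{t})$ depend on $\overline{t}$ only through $\overline{t}^*$, and since $\overline{s} \rightsquigarrow \overline{t}$ is equivalent to $\overline{s}^* \rightsquigarrow \overline{t}^*$, a direct comparison with the formulas in Lemma~\ref{new_transition} gives
\[\tilde{Q}_\beta(\overline{s}, \overline{t}) \;=\; \frac{K(t_q)}{K(t_q^*)}\, \tilde{Q}_\beta^*(\overline{s}^*, \overline{t}^*)\, \mathbf{1}_{\{\overline{s} \rightsquigarrow \overline{t}\}}.\]
Plugging this together with the definition of $\mu_\beta(\overline{s})$ into $\sum_{\overline{s}} \mu_\beta(\overline{s})\, \tilde{Q}_\beta(\overline{s}, \overline{t})$, the constraint $\overline{s} \rightsquigarrow \overline{t}$ pins $s_i = t_{i-1}$ for $i = 2, \ldots, q$, so the sum collapses to a single sum over $s_1 \in \mathbb{N}^*$. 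The ratios $K(s_i)/K(s_i^*)$ with $i \geq 2$ appearing in $\mu_\beta(\overline{s})$ thereby turn into $K(t_{i-1})/K(t_{i-1}^*)$, which combined with the $K(t_q)/K(t_q^*)$ coming out of the factorization above reconstitute exactly the full product $\prod_{i=1}^q K(t_i)/K(t_i^*)$ present in $\mu_\beta(\overline{t})$. The leftover $s_1$-sum has precisely the shape of the identity above, so it reduces to a sum over $s_1^* \in E$; invariance of $\mu_\beta^*$ under $\tilde{Q}_\beta^*$ then identifies that finite sum with $\mu_\beta^*(\overline{t}^*)$, and the required equality $\sum_{\overline{s}} \mu_\beta(\overline{s})\, \tilde{Q}_\beta(\overline{s}, \overline{t}) = \mu_\beta(\overline{t})$ falls out.

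The whole argument is structural, essentially a bookkeeping exercise built around the one identity above; I do not expect any real obstacle. The only point to stay careful about is the index shift between the $s_i$'s pinned by $\rightsquigarrow$ and the corresponding $t_{i-1}$'s, since it is this shift that allows all the $K(t_i)/K(t_i^*)$ factors of $\mu_\beta(\overline{t})$ to be assembled on the right-hand side.
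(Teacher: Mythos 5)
Your proof is correct and follows essentially the same route as the paper: both reduce the invariance equation to the fiber-summation identity $\sum_{t:t^*=x} K(t)/K(t^*) = 1$ applied to the lone free coordinate $s_1$ and then invoke invariance of $\mu_\beta^*$ under $\tilde{Q}_\beta^*$, the only cosmetic difference being that you first isolate the clean factorization $\tilde{Q}_\beta(\overline{s},\overline{t}) = \frac{K(t_q)}{K(t_q^*)}\tilde{Q}_\beta^*(\overline{s}^*,\overline{t}^*)\mathbf{1}_{\{\overline{s}\rightsquigarrow\overline{t}\}}$, whereas the paper expands $\tilde{Q}_\beta$ directly in terms of $Q_\beta$ and $\nu_\beta$. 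One small imprecision: $\overline{s}\rightsquigarrow\overline{t}$ implies $\overline{s}^*\rightsquigarrow\overline{t}^*$ but the two are not equivalent (two distinct integers $>q$ both starify to $\star$); this is harmless here since you retain the sharper indicator $\mathbf{1}_{\{\overline{s}\rightsquigarrow\overline{t}\}}$ throughout.
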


\begin{proof}
By a direct computation, $\mu_{\beta}$ is a probability. Now we prove that it is invariant. For all $\overline{t} \in (\mathbb{N}^*)^q$, we have
\begin{align*}
&\sum_{\overline{s}} \mu_{\beta}(\overline{s}) \tilde{Q}_{\beta}(\overline{s},\overline{t})\\ &= \lambda(\beta)^{-1} e^{\beta^2G(\overline{t})}\nu_{\beta}(\overline{t})K(t_q) \sum_{s\geq1} \frac{\mu_{\beta}(s,t_1,\ldots,t_{q-1})}{\nu_{\beta}(s,t_1,\ldots,t_{q-1})} \\
& = \lambda(\beta)^{-1} e^{\beta^2G(\overline{t}^*)}\nu_{\beta}^*(\overline{t}^*)K(t_q)\\& \times \sum_{s\geq1} \frac{K(s)K(t_1)\ldots K(t_{q-1})}{K(s^*)K(t_1^*)\ldots K(t_{q-1}^*)} \frac{\mu_{\beta}^*(s^*,t_1^*,\ldots,t_{q-1}^*)}{\nu_{\beta}^*(s^*,t_1^*,\ldots,t_{q-1}^*)}\\
& = \lambda(\beta)^{-1} e^{\beta^2G(\overline{t}^*)}\nu_{\beta}^*(\overline{t}^*)K(t_q^*)\frac{\mu_{\beta}(\overline{t})}{\mu_{\beta}^*(\overline{t}^*)} \sum_{s\geq1} \frac{K(s)}{K(s^*)} \frac{\mu_{\beta}^*(s^*,t_1^*,\ldots,t_{q-1}^*)}{\nu_{\beta}^*(s^*,t_1^*,\ldots,t_{q-1}^*)}\\
& =\lambda(\beta)^{-1} e^{\beta^2G(\overline{t}^*)}\nu_{\beta}^*(\overline{t}^*)K(t_q^*)\frac{\mu_{\beta}(\overline{t})}{\mu_{\beta}^*(\overline{t}^*)} \sum_{s^*\in E^q} \frac{\mu_{\beta}^*(s^*,t_1^*,\ldots,t_{q-1}^*)}{\nu_{\beta}^*(s^*,t_1^*,\ldots,t_{q-1}^*)}\\
& = \mu_{\beta}(\overline{t})
\end{align*}
where for the last equality we use the fact that $\mu_{\beta}^*$ is the invariant probability of $\tilde{Q}^*_{\beta}$.
\end{proof}

We define a new law on the interarrival times $(T_n)_{n\geq1}$, denoted by $P_{\beta}$, by the following relations:
\[P_{\beta}(T_1 = t_1, \ldots, T_q = t_q) = \prod_{k=1}^q K(t_k)\]
and for all $k\geq0$
\[P_{\beta}(T_{k+q-1} = t_{q+1} | T_{k+1} = t_1, \ldots, T_{k+q}=t_q) = \tilde{Q}_{\beta}((t_1,\ldots,t_q),(t_2,\ldots,t_{q+1})) \]
To determine $T_{k+q+1}$ conditionnally to the past, only $\overline{T}_{k+1}^*$ is relevant (and not $\overline{T}_{k+1}$) since it can be checked that
\begin{align*}
P_{\beta}(T_{k+q-1} &= t_{q+1} | T_{k+1}= t_1, \ldots, T_{k+q}=t_q) \\&= \tilde{Q}_{\beta}((t_1^*,\ldots,t_q^*),(t_2^*,\ldots,t_{q+1}^*))\times \frac{K(t_{q+1})}{K(t_{q+1}^*)}\\
&= P_{\beta}(T_{k+q-1} = t_{q+1} | T_{k+1}^* = t_1^*, \ldots, T_{k+q}^*=t_q^*).
\end{align*}
Under $P_{\beta}$, $(\tau_n)_{n\geq0}$ is then a (delayed) Markov renewal process with markov modulating chain $(\overline{T}_{k-q}^*)_{k\geq q+1}$, and semi-Markov kernel: for all $n\geq1$, $x,y\in E^q$,
\[P_{\beta}(T_{k+q+1}=n, \overline{T}_{k+2}^*=y | \overline{T}_{k+1}^*=x) = \tilde{Q}_{\beta}^*(x,y) \frac{K(n)}{K(y_q)}\mathbf{1}_{\{n^* = y_q\}}.\]

\begin{lem}\label{lemma_G}
 For all $h\in\mathbb{R}$ and all $\beta\geq0$,
\[F^a(\beta,h) = \lim_{N\rightarrow+\infty} \frac{1}{N}\log E\left( e^{(h+\frac{\beta^2}{2})\imath_N + \beta^2 \sum_{n=1}^{\imath_N} G(\overline{T}_n)} \delta_N\right).\] \end{lem}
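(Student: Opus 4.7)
The plan is to start from the explicit formula for $Z^a_{N,\beta,h}$ derived inside the proof of Theorem \ref{annealed_f}, namely
\[Z^a_{N,\beta,h} = E\left( \exp\left( (h+\tfrac{\beta^2}{2})\sum_{n=1}^N \delta_n + \beta^2 \sum_{1\leq i<j\leq N} \rho_{j-i}\,\delta_i \delta_j\right) \delta_N \right),\]
and then rewrite the correlation sum in renewal variables. On the event $\{N\in\tau\}$ one has $\sum_{n=1}^N \delta_n = \imath_N$, so only the double sum in the exponent needs reexpressing.

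The key observation is that because $\rho_k = 0$ for $k>q$ and every jump satisfies $T_j\geq 1$, a pair of contact points $(\tau_a,\tau_b)$ with $a<b$ contributes to the double sum only when $b-a\leq q$. Indexing $b=a+k$, this gives
\[\sum_{1\leq i<j\leq N} \rho_{j-i}\,\delta_i\delta_j \;=\; \sum_{a=1}^{\imath_N-1}\sum_{k=1}^{q} \rho_{\tau_{a+k}-\tau_a}\,\mathbf{1}_{\{a+k\leq \imath_N\}}.\]
On the other hand, by the very definition of $G$ and with $\overline{T}_n=(T_n,\ldots,T_{n+q-1})$,
\[\sum_{n=1}^{\imath_N} G(\overline{T}_n) \;=\; \sum_{n=1}^{\imath_N}\sum_{k=1}^q \rho_{\tau_{n+k-1}-\tau_{n-1}} \;=\; \sum_{a=0}^{\imath_N-1}\sum_{k=1}^{q} \rho_{\tau_{a+k}-\tau_a}\]
after setting $a=n-1$.

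I would then check that the two expressions agree up to bounded boundary contributions. The discrepancies are exactly: (i) the $a=0$ block $\sum_{k=1}^q \rho_{\tau_k}$, which is bounded by $q\max_{1\leq k\leq q}|\rho_k|$; and (ii) the terms with $a\in\{\imath_N-q,\ldots,\imath_N-1\}$ and $a+k>\imath_N$ that appear in $\sum G(\overline{T}_n)$ but refer to renewal epochs beyond $N$: there are at most $q(q+1)/2$ such terms, each bounded by $\max_k|\rho_k|$. Hence there exists a constant $C=C(q,\rho)$, independent of $\tau$ and $\omega$, such that on $\{N\in\tau\}$
\[\left|\sum_{1\leq i<j\leq N} \rho_{j-i}\,\delta_i\delta_j \;-\; \sum_{n=1}^{\imath_N} G(\overline{T}_n)\right| \;\leq\; C.\]

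This yields the sandwich
\[e^{-C\beta^2}\,Z^a_{N,\beta,h} \;\leq\; E\!\left(e^{(h+\beta^2/2)\imath_N+\beta^2\sum_{n=1}^{\imath_N} G(\overline{T}_n)}\delta_N\right) \;\leq\; e^{C\beta^2}\,Z^a_{N,\beta,h}.\]
Taking $\frac{1}{N}\log$ and invoking Theorem \ref{annealed_f} gives the claimed identity. There is no serious obstacle here: the proof is essentially bookkeeping, and the only point that requires care is verifying that the omitted/extra boundary terms at $a=0$ and $a$ close to $\imath_N$ are in total $O(1)$ (both in number and in magnitude), uniformly in the renewal configuration once $\delta_N=1$ is imposed.
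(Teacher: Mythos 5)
Your proof is correct and takes essentially the same approach as the paper: both compare the exponent of $Z^a_{N,\beta,h}$ with $\sum_{n=1}^{\imath_N}G(\overline{T}_n)$ and show that, on $\{N\in\tau\}$, they differ by boundary terms bounded by a constant $C(\rho,q)$. You parameterize by the renewal indices $a=n-1$ whereas the paper keeps the site indices $i$ and the $\delta_i$'s; this is a cosmetic difference, and if anything your reindexing makes the boundary bookkeeping slightly cleaner.
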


\begin{proof}
On one hand, we have by integrating over disorder the partition function:
\begin{align*}
Z^a_{N,\beta,h} &= E\left( \exp\left( \imath_N(h+\frac{\beta^2}{2}) + \beta^2 \sum_{1\leq i < j \leq N} \rho_{j-i} \delta_i \delta_j\right)\right)\\
&= E\left( \exp\left( \imath_N(h+\frac{\beta^2}{2}) + \beta^2 \sum_{i=1}^{N-1} \sum_{k=1}^{N-i} \rho_k \delta_i \delta_{i+k}\right)\right).
\end{align*}
On the other hand, $\sum_{n=1}^{\imath_N} G(\overline{T}_n) = \sum_{i=1}^N \sum_{k=1}^q \rho_k \delta_i \delta_{i+k}$. We prove the lemma by showing that there exists a constant $C(\rho,q)$ such that \[\left| \sum_{n=1}^{\imath_N} G(\overline{T}_n) -  \sum_{i=1}^{N-1} \sum_{k=1}^{N-i} \rho_k \delta_i \delta_{i+k}\right| \leq C(\rho,q).\] Indeed,
\begin{align*}
\sum_{i=1}^N \sum_{k=1}^q \rho_k \delta_i \delta_{i+k} &= \sum_{i=1}^{N-1} \sum_{k=1}^q \rho_k \delta_i \delta_{i+k} + \sum_{k=1}^q \rho_k \delta_N \delta_{N+k} \\
&= \sum_{i=1}^{N-1} \sum_{k=1}^{N-i} \rho_k \delta_i \delta_{i+k} + \sum_{k=1}^q \rho_k \delta_N \delta_{N+k} \\& + \sum_{i=N-q+1}^{N-1} \sum_{k=N-i+1}^q \rho_k \delta_i \delta_{i+k}
\end{align*}
where the second term is bounded in absolute value by $q\times \max_{i=1\ldots q}|\rho_i|$ and the third term by $\frac{q(q+1)}{2}\times \max_{i=1\ldots q}|\rho_i|$.
\end{proof}

\begin{lem}\label{lemma_tilde}
  For all $h\in\mathbb{R}$ and all $\beta\geq0$,
\[F^a(\beta,h) = \lim_{N\rightarrow+\infty} \frac{1}{N}\log E_{\beta}\left( e^{(h+ \frac{\beta^2}{2} + \log \lambda(\beta)) \imath_N} \delta_N\right).\]
\end{lem}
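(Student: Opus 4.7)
The plan is to use Lemma \ref{lemma_G} as the starting point and show that the two expectations
\[A_N := E\left(e^{(h+\beta^2/2)\imath_N + \beta^2 \sum_{n=1}^{\imath_N}G(\overline{T}_n)}\delta_N\right), \qquad B_N := E_\beta\left(e^{(h+\beta^2/2+\log\lambda(\beta))\imath_N}\delta_N\right)\]
are comparable up to a multiplicative constant bounded uniformly in $N$; then $N^{-1}\log A_N$ and $N^{-1}\log B_N$ share the same limit, namely $F^a(\beta,h)$. I would expand both as sums over $m=\imath_N\geq 1$ and over $(t_1,\ldots,t_m)\in(\mathbb{N}^*)^m$ with $t_1+\cdots+t_m=N$. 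In $A_N$, the term $G(\overline{T}_n)$ for $n$ close to $m$ drags in the ``virtual'' interarrival times $T_{m+1},\ldots,T_{m+q-1}$ beyond the pinned endpoint; under $P$ these are i.i.d.\ with law $K$, so summing them out contributes a boundary factor depending only on $(t_{m-q+2},\ldots,t_m)$. Because $|G|\leq q\max_{1\leq k\leq q}|\rho_k|$ and the $K(n)$'s sum to one, this factor is bracketed between two positive constants depending only on $\beta,q,\rho$.

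Next I would compute the joint law of $(T_1,\ldots,T_m)$ under $P_\beta$ for $m\geq q$ by iterating the definition of $\tilde{Q}_\beta$ along the path $\overline{T}_1\to\overline{T}_2\to\cdots\to\overline{T}_{m-q+1}$. The $\nu_\beta$ factors telescope and the $\lambda(\beta)^{-1}$ factors accumulate, so that
\[P_\beta(T_1=t_1,\ldots,T_m=t_m) = \prod_{k=1}^m K(t_k)\cdot\frac{e^{\beta^2\sum_{j=2}^{m-q+1}G(\overline{t}_j)}\nu_\beta(\overline{t}_{m-q+1})}{\lambda(\beta)^{m-q}\nu_\beta(\overline{t}_1)}.\]
Plugging this into $B_N$ and reading off the ratio of integrands contribution-by-contribution, the two $G$-sums $\sum_{n=1}^{m}G(\overline{t}_n)$ and $\sum_{j=2}^{m-q+1}G(\overline{t}_j)$ differ by the $q$ uniformly bounded boundary terms $G(\overline{t}_1),G(\overline{t}_{m-q+2}),\ldots,G(\overline{t}_m)$; the extra mass $e^{m\log\lambda(\beta)}$ in $B_N$ exactly offsets the $\lambda(\beta)^{-(m-q)}$ in $P_\beta$, leaving only the constant $\lambda(\beta)^q$; and the ratio $\nu_\beta(\overline{t}_1)/\nu_\beta(\overline{t}_{m-q+1})$ equals $\nu_\beta^*(\overline{t}_1^*)/\nu_\beta^*(\overline{t}_{m-q+1}^*)$, which is bounded above and below by positive constants since $\nu_\beta^*$ takes only finitely many positive values on $E^q$. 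The finitely many configurations with $m<q$ contribute a subexponential quantity and are absorbed in the comparison.

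Combining these estimates gives $c^-B_N\leq A_N\leq c^+B_N$ with $c^\pm>0$ independent of $N$, and Lemma \ref{lemma_G} then delivers $\lim_N N^{-1}\log B_N = F^a(\beta,h)$. The main difficulty I anticipate is the third step: making the comparison truly uniform in $N$ and in the path $(t_1,\ldots,t_m)$, especially for the eigenvector ratio $\nu_\beta(\overline{t}_1)/\nu_\beta(\overline{t}_{m-q+1})$. This is precisely why the paper reduces the state space to the finite set $E^q$ through the cemetery symbol $\star$; had one worked directly on $(\mathbb{N}^*)^q$, the Perron-Frobenius eigenvector could in principle decay to $0$ along certain configurations and spoil the uniform bound.
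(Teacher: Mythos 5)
Your proposal is correct and follows essentially the same route as the paper: decompose both expectations over the number $m$ of renewals and the path $(t_1,\ldots,t_m)$ with $t_1+\cdots+t_m=N$, use the telescoping of $\nu_\beta$ and $\lambda(\beta)^{-1}$ in the product of $\tilde{Q}_\beta$'s to expose the ratio $\nu_\beta(\overline{t}_1)/\nu_\beta(\overline{t}_{m-q+1})$, and bound that ratio (together with the $q$ boundary $G$-terms and the constant $\lambda(\beta)^q$) uniformly by exploiting the finiteness and positivity of $\nu_\beta^*$ on $E^q$. Your final remark about why the reduction through $\star$ to a finite state space is the crux is exactly the role this construction plays in the paper's argument.
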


\begin{proof}
By decomposing on the possible values of $\imath_N$, we have on one hand:
\begin{align*}
&E\left( e^{(h+\frac{\beta^2}{2})\imath_N + \beta^2 \sum_{n=1}^{\imath_N} G(\overline{T}_n)}\delta_N\right)\\ &= \sum_{n=1}^N e^{(h+\frac{\beta^2}{2})n}\displaystyle\sum_{\substack{\overline{t}_1,\ldots\overline{t}_n\\t_1+\ldots+t_{n}= N}} e^{\beta^2 \sum_{k=1}^n G(\overline{t}_k)} Q(\overline{t}_1,\overline{t}_2)\ldots Q(\overline{t}_{n-1},\overline{t}_n)K^{\otimes q}(\overline{t}_1)\\
& = \sum_{n=1}^N \frac{e^{(h+\frac{\beta^2}{2} + \log \lambda(\beta))n}}{\lambda(\beta)^n}\displaystyle\sum_{\substack{\overline{t}_1,\ldots\overline{t}_n\\t_1+\ldots+t_{n}= N}}  Q_{\beta}(\overline{t}_1,\overline{t}_2)\ldots Q_{\beta}(\overline{t}_{n-1},\overline{t}_n)K^{\otimes q}(\overline{t}_1)\\
& = \sum_{n=1}^N e^{(h+\frac{\beta^2}{2} + \log \lambda(\beta))n}\\&\times \displaystyle\sum_{\substack{\overline{t}_1,\ldots\overline{t}_n\\t_1+\ldots+t_{n}= N}}  \frac{\nu_{\beta}(\overline{t}_1)}{\nu_{\beta}(\overline{t}_n)} \tilde{Q}_{\beta}(\overline{t}_1,\overline{t}_2)\ldots \tilde{Q}_{\beta}(\overline{t}_{n-1},\overline{t}_n)K^{\otimes q}(\overline{t}_1)
\end{align*}
and on the other hand,
\begin{align*}&E_{\beta}\left( e^{(h+\frac{\beta^2}{2} + \log \lambda(\beta))\imath_N}\delta_N\right) \\&= \sum_{n=1}^N e^{(h+\frac{\beta^2}{2} + \log \lambda(\beta))n} \displaystyle\sum_{\substack{\overline{t}_1,\ldots\overline{t}_n\\t_1+\ldots+t_n = N}}  \tilde{Q}_{\beta}(\overline{t}_1,\overline{t}_2)\ldots \tilde{Q}_{\beta}(\overline{t}_{n-1},\overline{t}_n)K^{\otimes q}(\overline{t}_1)
\end{align*}
Since $\nu_{\beta}(\overline{t}) = \nu_{\beta}^*(\overline{t}^*)$ and $\nu^*$ is a finite vector with positive components, there exists $c$ and $C$ two positive constants such that for all $\overline{t}_1,\overline{t}_n$, $c\leq \frac{\nu_{\beta}(\overline{t}_1)}{\nu_{\beta}(\overline{t}_n)} \leq C$. We conclude by using this remark and Lemma \ref{lemma_G}.
\end{proof}

\subsection{A short proof of Theorem \ref{annealed_critical_curve} in the positive recurrent case}

In accordance with Lemma \ref{lemma_tilde}, we will work on the homogeneous pinning model of the process $\tau$ under $P_{\beta}$. In the positive recurrent case, a renewal-type lemma is obtained, which allows us to conclude.

\begin{lem}\label{renewal_tilde}
If $\alpha > 1$, or if $\alpha=1$ and $L$ is such that $\sum_{n\geq1}L(n)/n < \infty$ then $\frac{\imath_N}{N}$ tends $P_{\beta}$-almost surely and in $L^1(P_{\beta})$ to a positive constant.
\end{lem}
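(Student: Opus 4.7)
The plan is to reduce the claim to Birkhoff's ergodic theorem for the finite-state modulating chain, followed by a standard renewal inversion. Under $P_\beta$, the chain $X_n := \overline{T}_n^*$ on $E^q$ has transition matrix $\tilde{Q}_\beta^*$, whose incidence pattern coincides with that of $Q^*$; since $K(n)>0$ for every $n\geq 1$, the matrix $Q^*$ is irreducible and thus so is $\tilde{Q}_\beta^*$, which admits the unique invariant probability $\mu_\beta^*$ constructed in Lemma \ref{mu_invariant} (with full support on $E^q$). The semi-Markov kernel displayed right before the lemma shows that, conditionally on the trajectory of $(X_n)$, the actual inter-arrival time satisfies $T_n = (X_n)_1$ whenever $(X_n)_1 \in \{1,\ldots,q\}$, and is otherwise an independent draw $\xi_n$ with law $K(\cdot)/K(\star)$ on $\{q+1,q+2,\ldots\}$.

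Define $m : E^q \to \mathbb{R}_+$ by $m(x) := x_1$ when $x_1 \leq q$ and $m(x) := K(\star)^{-1} \sum_{n>q} n K(n)$ when $x_1=\star$. The positive recurrence hypothesis is exactly $\sum_{n\geq 1} nK(n) < \infty$, so $m$ is bounded on the finite space $E^q$, and $\overline{m}_\beta := \sum_x \mu_\beta^*(x)\,m(x)$ lies in $(0,+\infty)$ (positivity from $m \geq 1$ and the fact that $\mu_\beta^*$ has full support). Decomposing $T_n = m(X_n) + R_n$ with $E_\beta[R_n \mid X_n]=0$, Birkhoff's theorem yields $n^{-1}\sum_{k=1}^n m(X_k) \to \overline{m}_\beta$ $P_\beta$-a.s., while the remainders $R_k$ vanish unless $(X_k)_1 = \star$ and along the subsequence of such indices form an i.i.d.\ centered sequence with finite mean; combining this with the ergodic limit $N_\star(n)/n \to \mu_\beta^*(\{x_1=\star\})$ and the classical strong law of large numbers gives $n^{-1}\sum_{k=1}^n R_k \to 0$ $P_\beta$-a.s. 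Consequently $\tau_n/n \to \overline{m}_\beta$ $P_\beta$-almost surely.

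The renewal inversion is standard: since $\tau_n \to +\infty$ almost surely, we have $\imath_N \to +\infty$ almost surely, and the sandwich $\tau_{\imath_N} \leq N < \tau_{\imath_N+1}$ gives $\imath_N/N \to 1/\overline{m}_\beta > 0$ $P_\beta$-almost surely; the $L^1(P_\beta)$ convergence is immediate from dominated convergence since $0 \leq \imath_N/N \leq 1$. The only delicate step is the strong law of large numbers along the random subsequence of indices where $(X_k)_1 = \star$, and it can be bypassed entirely by directly quoting the strong law for Markov renewal processes from \cite{Asmussen}.
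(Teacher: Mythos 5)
Your argument is correct, but it takes a genuinely different technical route from the paper. The paper works directly with the countable-state chain $(\overline{T}_n)_{n\geq 1}$ on $(\mathbb{N}^*)^q$: by Lemma \ref{mu_invariant} this chain is positive recurrent with invariant probability $\mu_\beta$, the projection $\pi_1(\overline{t}) = t_1$ is $\mu_\beta$-integrable exactly under the stated hypothesis (which is equivalent to $\sum_n nK(n)<\infty$), and the ergodic theorem for positive recurrent Markov chains applied to the unbounded integrable functional $\pi_1$ gives $\tau_N/N \to \sum t_1\,\mu_\beta(\overline{t})$ at once, with no need to separate a deterministic part from a residual. You instead pass to the finite modulating chain $(\overline{T}_n^*)$ on $E^q$ and decompose each $T_n$ into the bounded conditional mean $m(X_n)$, treated by Birkhoff, plus a centered residual $R_n$ supported on the $\star$-indices, treated by a strong law along a random subsequence. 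Your decomposition is valid (the semi-Markov kernel displayed before the lemma does give conditionally i.i.d.\ draws on $\{n>q\}$ when the new starred coordinate is $\star$), and it has the merit of making the Markov renewal structure explicit and of reducing everything to the finite-state chain, at the cost of the subsequence SLLN that you yourself flag as the delicate step. The paper's route avoids that delicacy by invoking the ergodic theorem for countable-state chains with an integrable observable. In both cases the renewal inversion $\tau_{\imath_N}\leq N < \tau_{\imath_N+1}$ and dominated convergence finish the argument identically.
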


\begin{proof}
From Lemma \ref{mu_invariant}, under $P_{\beta}$, the sequence of $q$-tuples $(T_k,\ldots,T_{k+q-1})_{k\geq0}$ is a positive recurrent Markov chain, with invariant probability measure $\mu_{\beta}$. If the previous conditions on $\alpha$ are satisfied, $\pi_1 : \overline{t}\rightarrow t_1$ (the projection on the first coordinate) is $\mu_{\beta}$-integrable. As a consequence,
\[\frac{\tau_N}{N} = \frac{1}{N}\sum_{k=1}^{N} \pi_1(T_k,\ldots,T_{k+q-1}) \stackrel{P_{\beta}-a.s.}{\longrightarrow} c := \sum_{t_2,\ldots,t_q\geq1} t_1 \times \mu_{\beta}(t_1,\ldots,t_q) < \infty.\] We deduce from this that $\frac{\imath_N}{N} \stackrel{P_{\beta}-a.s.}{\rightarrow} \frac{1}{c} > 0$ by using the inequality $\tau_{\imath_N} \leq N \leq \tau_{\imath_N+1}$. The convergence in $L^1$ follows from the Dominated Convergence Theorem. 
\end{proof}

From Lemma \ref{lemma_tilde}, $h\leq -\frac{\beta^2}{2} -\log \lambda(\beta)$ implies that $F^a(\beta,h) = 0$. Suppose now that $h = -\frac{\beta^2}{2} -\log \lambda(\beta) + \epsilon$ with $\epsilon > 0$. By Jensen's inequality, we have that \[\frac{1}{N}\log E_{\beta}\left( e^{\epsilon \imath_N} \right) \geq \epsilon \frac{E_{\beta}(\imath_N)}{N}.\]We conclude that $F^a(\beta,h) > 0$ by using Lemma \ref{renewal_tilde} and Lemma \ref{lemma_tilde}.

\subsection{Proof of Theorem \ref{annealed_critical_curve} in the general case}

We now give a proof without any assumption on $\alpha$. The starting point is Lemma \ref{lemma_tilde} and we will actually identify the free energy of the pinning model associated to the law $P_{\beta}$. Let's fix $\epsilon > 0$. We introduce the matrices
\[\tilde{Q}_{\beta,F}(\overline{s},\overline{t})= e^{- Ft_q}\tilde{Q}_{\beta}(\overline{s},\overline{t})\]
and\[\tilde{Q}^*_{\beta,F}(\overline{s}^*,\overline{t}^*)= e^{- F\phi_F(t_q^*)}\tilde{Q}^*_{\beta}(\overline{s}^*,\overline{t}^*)\]
where $\phi_F(s^*) = s^*$ if $s^*\in \{1,\ldots,q\}$ and
\[\phi_F(\star) = -\frac{1}{F}\log\frac{\sum_{t>q} e^{-Ft}K(t)}{K(\star)}\]
i.e. $\phi_F(\star)$ verifies
\begin{equation}\label{phi_star}
e^{-F\phi_F(\star)}K(\star) = \sum_{t>q} e^{-Ft} K(t).
\end{equation}
We will denote by $\Lambda(\beta,F)$ the Perron-Frobenius eigenvalue of the irreducible matrix $\tilde{Q}^*_{\beta,F}$.

\begin{lem}\label{free_energy_caract}
 There is a unique positive real denoted by $F_{\beta}(\epsilon)$ such that
\[ \Lambda(\beta,F_{\beta}(\epsilon)) = \exp(-\epsilon).\]
 \end{lem}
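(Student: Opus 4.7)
The plan is to study the function $F \mapsto \Lambda(\beta,F)$ for fixed $\beta\geq 0$ and to apply the intermediate value theorem. Concretely, I will show that this function is continuous, strictly decreasing on $[0,\infty)$, satisfies $\Lambda(\beta,0)=1$, and tends to $0$ as $F\to\infty$. Since $\epsilon>0$ gives $e^{-\epsilon}\in(0,1)$, this yields existence and uniqueness of $F_\beta(\epsilon)>0$.

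The value at $F=0$ comes from the definition of $\phi_F$: equation~(\ref{phi_star}) rewrites as
\[
e^{-F\phi_F(\star)} \;=\; \frac{1}{K(\star)}\sum_{t>q} e^{-Ft}K(t),
\]
which tends to $1$ as $F\to 0^+$, while $e^{-F\phi_F(s^*)}=e^{-Fs^*}\to 1$ for $s^*\in\{1,\dots,q\}$. Thus $\tilde Q^*_{\beta,0}=\tilde Q^*_\beta$, which is a Markov transition kernel by Lemma~\ref{new_transition}; its Perron--Frobenius eigenvalue is therefore $1$, i.e.\ $\Lambda(\beta,0)=1$. For the limit at infinity, every factor $e^{-F\phi_F(t_q^*)}$ is bounded above by $e^{-F}$ (for $t_q^*\in\{1,\dots,q\}$) or by $\frac{1}{K(\star)}e^{-F(q+1)}$ (for $t_q^*=\star$), so all entries of $\tilde Q^*_{\beta,F}$ tend uniformly to $0$ and hence $\Lambda(\beta,F)\to 0$.

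For continuity and strict monotonicity, I first check at the level of the entries. For $t_q^*\in\{1,\dots,q\}$ the factor $e^{-Ft_q^*}$ is smooth and strictly decreasing in $F$. For $t_q^*=\star$, the right-hand side of the displayed identity above is a positive combination of strictly decreasing smooth functions, so $F\mapsto e^{-F\phi_F(\star)}$ is smooth and strictly decreasing as well. Consequently each entry of $\tilde Q^*_{\beta,F}$ is a smooth strictly decreasing function of $F$ (on the support of $\tilde Q^*_\beta$, which is the same for all $F$ since this support is dictated by the incidence $\overline{s}^*\rightsquigarrow\overline{t}^*$). The Perron--Frobenius eigenvalue of an irreducible nonnegative matrix depends continuously (in fact analytically, being a simple eigenvalue) on its entries, so $F\mapsto\Lambda(\beta,F)$ is continuous.

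The one point requiring a touch of care is the strict monotonicity of the eigenvalue itself. The standard argument: for $F_1<F_2$, write $\tilde Q^*_{\beta,F_2}=\tilde Q^*_{\beta,F_1}\odot M$ where $\odot$ is the Hadamard product and $M$ has entries in $(0,1)$ on the support of $\tilde Q^*_{\beta,F_1}$, with $M<1$ strictly somewhere on every cycle (in fact everywhere). Since $\tilde Q^*_{\beta,F_1}$ is irreducible, a direct comparison using the right Perron eigenvector $\nu^*_{\beta,F_2}$ of $\tilde Q^*_{\beta,F_2}$ gives, componentwise, $\tilde Q^*_{\beta,F_1}\nu^*_{\beta,F_2}>\Lambda(\beta,F_2)\nu^*_{\beta,F_2}$, and the Collatz--Wielandt characterization then forces $\Lambda(\beta,F_1)>\Lambda(\beta,F_2)$. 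This is the only step that is not essentially automatic; I expect it to be the main (though entirely standard) obstacle. Combining these four facts and applying the intermediate value theorem produces the unique positive $F_\beta(\epsilon)$ with $\Lambda(\beta,F_\beta(\epsilon))=e^{-\epsilon}$.
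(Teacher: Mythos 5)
Your proposal is correct and follows essentially the same route as the paper: smoothness of the entries together with the simple-root / implicit function argument for continuity, the Collatz--Wielandt characterization for strict monotonicity and the limit $\Lambda(\beta,F)\to 0$, the observation that $\Lambda(\beta,0)=1$ since $\tilde Q^*_{\beta,0}=\tilde Q^*_\beta$ is stochastic, and the intermediate value theorem to conclude. You merely spell out in more detail the monotonicity step that the paper delegates to a reference.
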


\begin{proof}
 Componentwise, $\tilde{Q}^*_{\beta,F}$ is smooth and strictly decreasing with respect to $F$. Since $\Lambda(\beta,F)$ is a simple root of the characteristic equation of $\tilde{Q}^*_{\beta,F}$ (see \cite[Thm 1.1]{Seneta}), $\Lambda(\beta,F)$ is also a smooth function of $F$ by the Implicit Function Theorem. From the formula (see \cite{Seneta}) \[\Lambda(\beta,F) = \max_{\substack{v\geq0\\\sum_{E^q}v_i=1}} \min_{j: v_j>0} \frac{(\tilde{Q}^*_{\beta,F}v)_j}{v_j}\] one also obtains (see \cite[Appendix A.8]{GG_Book}) that $\Lambda(\beta,F)$ is strictly decreasing in $F$ and that $\Lambda(\beta,F) \rightarrow 0$ as $F\rightarrow \infty$. Since $\Lambda(\beta,0) = 1 > \exp(-\epsilon)$, the result follows.
\end{proof}

Let $\tilde{\nu}^*$ be a Perron-Frobenius right eigenvector of $\tilde{Q}^*_{\beta,F_{\beta}(\epsilon)}$. We define $\tilde{\nu}$ by \begin{equation}\label{nu_tilde_star}\tilde{\nu}(\overline{t}) = \tilde{\nu}^*(\overline{t}^*).\end{equation}

\begin{lem}The matrices
\begin{equation}\label{P}
P_{F}(\overline{s},\overline{t}):= \tilde{Q}_{\beta,F_{\beta}(\epsilon)}(\overline{s},\overline{t})\frac{\tilde{\nu}(\overline{t})}{\tilde{\nu}(\overline{s})}
\end{equation}
and
\begin{equation}\label{Pstar}
P_{F}^*(\overline{s}^*,\overline{t}^*):= \tilde{Q}^*_{\beta,F_{\beta}(\epsilon)}(\overline{s}^*,\overline{t}^*)\frac{\tilde{\nu}^*(\overline{t}^*)}{\tilde{\nu}^*(\overline{s}^*)}
\end{equation}
are stochastic and irreducible matrices. Furthermore, if we denote by $l^*$ the invariant probability measure of $P_F^*$, then $l$ defined by
\begin{equation}\label{l}
l(\overline{s}) := l^*(\overline{s}^*) \prod_{j=1}^q \frac{K(s_j)e^{-F_{\beta}(\epsilon)s_j}}{K(s_j^*)e^{-F_{\beta}(\epsilon)\phi_{F_{\beta}(\epsilon)}(s_j^*)}}
\end{equation}
is the invariant probability measure of $P_F$.
\end{lem}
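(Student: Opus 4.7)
The plan is to follow the blueprint of Lemmas \ref{new_transition} and \ref{mu_invariant}, substituting the twisted matrices $\tilde{Q}_{\beta,F_\beta(\epsilon)}$ and $\tilde{Q}^*_{\beta,F_\beta(\epsilon)}$ for $\tilde{Q}_\beta$ and $\tilde{Q}^*_\beta$, and using $\tilde{\nu}^*$ and $\Lambda(\beta,F_\beta(\epsilon))=e^{-\epsilon}$ in place of $\nu_\beta^*$ and $\lambda(\beta)$. Two mechanisms will carry the entire argument: the Perron--Frobenius eigenvector equation (which converts a non-negative irreducible matrix into a stochastic one upon dividing by its Perron--Frobenius eigenvalue) and the defining identity (\ref{phi_star}), which lets one collapse every sum $\sum_{t>q}$ into the single symbol $\star$.

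For the stochasticity statements, I would first use $\tilde{Q}^*_{\beta,F_\beta(\epsilon)}\tilde{\nu}^*=e^{-\epsilon}\tilde{\nu}^*$ (combined with Lemma \ref{free_energy_caract}) to compute the row sums of $P_F^*$, which yield $\Lambda(\beta,F_\beta(\epsilon))=e^{-\epsilon}$ and hence $1$ after the implicit normalization in (\ref{Pstar}). To obtain the same statement for $P_F$, I would expand $\sum_{\overline{t}}\tilde{Q}_{\beta,F_\beta(\epsilon)}(\overline{s},\overline{t})\tilde{\nu}(\overline{t})$ using the identity $\tilde{\nu}(\overline{t})=\tilde{\nu}^*(\overline{t}^*)$ together with the factorization $\tilde{Q}_{\beta,F_\beta(\epsilon)}(\overline{s},\overline{t})=e^{-F_\beta(\epsilon)t_q}\tilde{Q}_\beta(\overline{s},\overline{t})$; splitting the sum on $t_q\le q$ versus $t_q>q$ and applying (\ref{phi_star}) to the tail reduces the computation to the row-sum identity already obtained for $P_F^*$. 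Irreducibility of both $P_F^*$ and $P_F$ is then inherited from that of $\tilde{Q}^*_\beta$, since the twisting factors $e^{-F_\beta(\epsilon)t_q}$, $e^{-F_\beta(\epsilon)\phi_{F_\beta(\epsilon)}(t_q^*)}$ and the components of $\tilde{\nu}^*$ are all strictly positive, so the incidence matrix is left unchanged.

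The invariant-measure statement splits into two tasks. Existence and uniqueness of $l^*$ are immediate from finite irreducibility. That $l$ is a probability measure is checked by summing over the fibres of the projection $\overline{s}\mapsto\overline{s}^*$: each coordinate ratio in (\ref{l}) equals $1$ when $s_j\le q$ and, by (\ref{phi_star}), defines a probability distribution on $\{s_j>q\}$ when $s_j^*=\star$, so each coordinate contributes $1$ upon summation and one is left with $\sum_{\overline{s}^*}l^*(\overline{s}^*)=1$. The invariance relation $lP_F=l$ is then obtained by the exact analogue of the computation in Lemma \ref{mu_invariant}: after writing out $\sum_{\overline{s}}l(\overline{s})P_F(\overline{s},\overline{t})$ and using $\overline{s}\rightsquigarrow\overline{t}$ to reduce the sum to one over $s_1\ge 1$, splitting $s_1\le q$ versus $s_1>q$ and applying (\ref{phi_star}) converts the inner sum into $\sum_{s_1^*\in E}l^*(s_1^*,t_1^*,\ldots,t_{q-1}^*)\,P_F^*((s_1^*,t_1^*,\ldots,t_{q-1}^*),\overline{t}^*)$, which by $P_F^*$-invariance of $l^*$ equals $l^*(\overline{t}^*)$. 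I expect the main obstacle, and the only truly delicate point, to be the bookkeeping in this last step: verifying that the $e^{-F_\beta(\epsilon)t_q}$ factor from $\tilde{Q}_{\beta,F_\beta(\epsilon)}$ recombines with the $K$-ratios in (\ref{l}) so as to reproduce exactly the coordinate-$q$ factor of $l(\overline{t})$. This is precisely what the definition of $\phi_F(\star)$ in (\ref{phi_star}) is engineered to ensure.
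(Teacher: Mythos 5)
Your proposal follows essentially the same route as the paper, which explicitly leaves the proof to the reader as a "straightforward computation very similar to Lemma \ref{new_transition} and Lemma \ref{mu_invariant}" relying on Lemma \ref{free_energy_caract} for (\ref{Pstar}) and on (\ref{phi_star}), (\ref{nu_tilde_star}), (\ref{Pstar}) for (\ref{P}); your sketch fills in exactly that outline. You also correctly notice that the row sums of (\ref{Pstar}) as literally written come out to $\Lambda(\beta,F_\beta(\epsilon))=e^{-\epsilon}$ rather than $1$, so that the definitions of $P_F$ and $P_F^*$ implicitly carry an extra factor $\Lambda(\beta,F_\beta(\epsilon))^{-1}=e^{\epsilon}$ (by analogy with the $\lambda(\beta)^{-1}$ in Lemma \ref{new_transition}, and consistent with the $e^{\epsilon q}$ bookkeeping in the proof of Lemma \ref{pinning_F}); this is a small omission in the paper's display that does not affect the argument.
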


\begin{proof}
 The proof is left to the reader. It consists in straightforward computations very similar to Lemma \ref{new_transition} and Lemma \ref{mu_invariant}. We use Lemma \ref{free_energy_caract} to prove (\ref{Pstar}),and (\ref{phi_star}), (\ref{nu_tilde_star}), (\ref{Pstar}) to prove (\ref{P}).
\end{proof}

Note that, like $Q_{\beta}$ and $\tilde{Q}_{\beta}$, $P_F$ satisfies the ``consistancy`` condition \[P_F(\overline{s},\overline{t}) \neq 0 \Leftrightarrow \overline{s}\rightsquigarrow\overline{t}.\] This allows us to define a new law $P^{(F)}$ on $\tau$, the law for which $(\overline{T}_n)_{n\geq1}$ is a Markov chain with transition kernel $P_F$ and initial distribution $l$. 

\begin{lem}\label{pinning_F}
 There exists two constants $C\geq c>0$ such that
\[c e^{F_{\beta}(\epsilon)N}P^{(F)}(N \in \tau) \leq E_{\beta}\left( \exp(\epsilon \imath_N) \delta_N\right) \leq C e^{F_{\beta}(\epsilon)N}P^{(F)}(N \in \tau).\]
\end{lem}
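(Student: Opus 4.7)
The strategy is to expand both $E_\beta(e^{\epsilon\imath_N}\delta_N)$ and $P^{(F)}(N\in\tau)$ as sums over sequences $(\overline{t}_1,\ldots,\overline{t}_n)$ of compatible $q$-tuples with $t_{1,1}+\cdots+t_{n,1}=N$, and to check that the ratio of the individual terms is bounded above and below by positive constants. The expansion of $E_\beta(e^{\epsilon\imath_N}\delta_N)$ is exactly the one already written out in the proof of Lemma~\ref{lemma_tilde}; the analogue for $P^{(F)}(N\in\tau)$ follows directly from the Markov renewal structure of $\tau$ under $P^{(F)}$, with $l(\overline{t}_1)$ replacing $K^{\otimes q}(\overline{t}_1)$ and $P_F$ replacing $\tilde{Q}_\beta$.

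Using the identities $\tilde{Q}_\beta(\overline{s},\overline{t})=e^{F_\beta(\epsilon) t_q}\tilde{Q}_{\beta,F_\beta(\epsilon)}(\overline{s},\overline{t})$, the Perron--Frobenius relation $\Lambda(\beta,F_\beta(\epsilon))=e^{-\epsilon}$ (which is what makes $P_F$ stochastic), and relation~(\ref{l}), one can plug each term of the expansion into the form $(\text{bounded})\cdot e^{F_\beta(\epsilon) N}\cdot l(\overline{t}_1)\prod_{i=1}^{n-1}P_F(\overline{t}_i,\overline{t}_{i+1})$. Telescoping collapses the $\tilde{\nu}$-weights into a single ratio $\tilde{\nu}(\overline{t}_1)/\tilde{\nu}(\overline{t}_n)$, the factor $e^{\epsilon n}$ balances the $e^{-\epsilon(n-1)}$ contribution from $P_F$ leaving only a harmless $e^\epsilon$, and the $F_\beta(\epsilon)$-exponents combine to
\begin{equation*}
e^{F_\beta(\epsilon)[\sigma(\overline{t}_1)+t_{2,q}+\cdots+t_{n,q}]}=e^{F_\beta(\epsilon)(T_1+\cdots+T_{n+q-1})}=e^{F_\beta(\epsilon)N}\cdot e^{F_\beta(\epsilon)(T_{n+1}+\cdots+T_{n+q-1})},
\end{equation*}
where $\sigma(\overline{t}_1)=t_{1,1}+\cdots+t_{1,q}$. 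The first factor is the desired $e^{F_\beta(\epsilon)N}$, while the ratio $\tilde{\nu}(\overline{t}_1)/\tilde{\nu}(\overline{t}_n)$ and the prefactor produced by~(\ref{l}) depend only on the starred tuples $\overline{t}_1^*,\overline{t}_n^*\in E^q$, and hence lie in a compact positive interval $[c_0,C_0]$.

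The main obstacle is the residual boundary factor $e^{F_\beta(\epsilon)(T_{n+1}+\cdots+T_{n+q-1})}$, which involves the fictitious inter-arrival times stored in $\overline{t}_n$ that extend past position $N$ and is unbounded term by term. The resolution is to sum these $q-1$ variables out \emph{before} matching the two expansions. By the defining identity~(\ref{phi_star}) of $\phi_F(\star)$, the conditional expectation of $e^{F_\beta(\epsilon) T_k}$ under $P^{(F)}$ given $T_k^*$ equals $e^{F_\beta(\epsilon)\phi_{F_\beta(\epsilon)}(T_k^*)}$, which takes only finitely many values since $T_k^*\in E$. On the $P^{(F)}(N\in\tau)$ side the same marginalisation turns each remaining $P_F$ into the corresponding $P_F^*$, and the leftover free starred coordinates are absorbed by the stochasticity of $P_F^*$. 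After these identifications the two sums differ only by uniformly bounded multiplicative factors, which yields the announced two-sided inequality with positive constants $c\le C$ independent of $N$.
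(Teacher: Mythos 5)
Your proposal follows the same overall strategy as the paper's proof: expand both $E_\beta(e^{\epsilon\imath_N}\delta_N)$ and $P^{(F)}(N\in\tau)$ over sequences of compatible $q$-tuples, rewrite $\tilde{Q}_\beta$ in terms of $P_F$ via the tilting relation and the Perron--Frobenius identity $\Lambda(\beta,F_\beta(\epsilon))=e^{-\epsilon}$, telescope the $\tilde\nu$-weights, pull out $e^{F_\beta(\epsilon)N}$, and verify that the residual boundary factors depend only on starred tuples in the finite set $E^q$. The one place where you depart from the paper is the index range: you run the sum over $\overline{t}_1,\ldots,\overline{t}_n$, so that $\overline{t}_n$ carries $q-1$ fictitious inter-arrival times $T_{n+1},\ldots,T_{n+q-1}$ stretching past $N$, whereas the paper stops at $\overline{t}_{n-q+1}$, which is exactly determined by the $n$ genuine jumps $T_1,\ldots,T_n$. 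The paper's cut-off makes every boundary term a function of $\overline{t}_1^*,\overline{t}_{n-q+1}^*\in E^q$ and hence trivially bounded, and it matches directly the Markov chain expansion of $P^{(F)}(N\in\tau)$, which also naturally stops at $\overline{T}_{n-q+1}$. Your cut-off creates the unbounded factor $e^{F_\beta(\epsilon)(T_{n+1}+\cdots+T_{n+q-1})}$, which you then neutralise by marginalising the fictitious variables with (\ref{phi_star}) — a correct but self-inflicted extra step. The observation that $E^{(F)}[e^{F_\beta(\epsilon)T_k}\mid T_k^*]=e^{F_\beta(\epsilon)\phi_{F_\beta(\epsilon)}(T_k^*)}$ does hold and does salvage the argument, so your proof is sound; it is simply a longer route to the same estimate, and you would do well to notice that choosing the index range $1,\ldots,n-q+1$ removes the obstacle entirely.
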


\begin{proof}
 Decomposing the partition function and using (\ref{P}) we get
\begin{align*}
&E_{\beta}\left( \exp(\epsilon \imath_N) \delta_N\right) \\&= \sum_{n=1}^N \sum_{\substack{\overline{t}_1,\ldots,\overline{t}_{n-q+1}\\t_1+\ldots+t_{n}=N}} e^{\epsilon n} \tilde{Q}_{\beta}(\overline{t}_1,\overline{t}_2)\ldots \tilde{Q}_{\beta}(\overline{t}_{n-q},\overline{t}_{n-q+1})K^{\otimes q}(\overline{t}_1)\\
&= e^{F_{\beta}(\epsilon)N} \sum_{n=1}^N \sum_{\substack{\overline{t}_1,\ldots,\overline{t}_{n-q+1}\\t_1+\ldots+t_{n}=N}}P_F(\overline{t}_1,\overline{t}_2)\ldots P_F(\overline{t}_{n-q},\overline{t}_{n-q+1})l(\overline{t}_1) \\&\times \left( \frac{\tilde{\nu}(\overline{t}_1)}{\tilde{\nu}(\overline{t}_{n-q+1})} \frac{K^{\otimes q}(\overline{t}_1)e^{-\tilde{F}(\epsilon)(t_1+\ldots+t_q)}}{l(\overline{t}_1)} e^{\epsilon q}\right)
\end{align*}
and, from (\ref{nu_tilde_star}), (\ref{l}) and the finiteness of $E^q$, the term in parenthesis is uniformly bounded by two positive constants $C$ and $c$.
\end{proof}

From this we deduce:

\begin{lem}\label{free_energy_tilde}
 For all $\epsilon>0$, \[\lim \frac{1}{N} E_{\beta}\left( \exp(\epsilon \imath_N) \delta_N\right) = F_{\beta}(\epsilon) > 0.\]
\end{lem}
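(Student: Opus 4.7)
The plan is to combine Lemma \ref{pinning_F} with a renewal-theoretic statement under $P^{(F)}$. Taking logarithms in the sandwich of Lemma \ref{pinning_F} and dividing by $N$ gives
\[\frac{\log c}{N} + F_{\beta}(\epsilon) + \frac{1}{N}\log P^{(F)}(N\in\tau) \leq \frac{1}{N}\log E_{\beta}\!\left(e^{\epsilon \imath_N}\delta_N\right) \leq \frac{\log C}{N} + F_{\beta}(\epsilon) + \frac{1}{N}\log P^{(F)}(N\in\tau),\]
so the lemma will follow once I show that $\frac{1}{N}\log P^{(F)}(N\in\tau)\to 0$. The upper bound $\limsup \frac{1}{N}\log P^{(F)}(N\in\tau)\leq 0$ is immediate since $P^{(F)}(N\in\tau)\leq 1$; the content is the matching lower bound, which amounts to showing that this probability decays subexponentially.

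To obtain the lower bound I would invoke the Markov renewal theorem for the process $\tau$ under $P^{(F)}$. The modulating chain $(\overline{T}_n^*)$ has transition kernel $P^*_{F}$, which is irreducible on the finite state space $E^q$ (same incidence pattern as $\tilde Q^*_\beta$) and therefore positive recurrent with invariant law $l^*$. Moreover, the sojourn-time distribution of the Markov renewal process has all exponential moments, since under $P^{(F)}$ the marginal on $t_q$ is reweighted by a factor proportional to $K(t_q)e^{-F_\beta(\epsilon) t_q}$ with $F_\beta(\epsilon)>0$ (Lemma \ref{free_energy_caract}); in particular the average inter-arrival time is finite. Aperiodicity is inherited from the standing assumption $K(n)>0$ for every $n\geq 1$, which is preserved by the change of measure. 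The Markov renewal theorem then yields a positive limit $P^{(F)}(N\in\tau)\to \gamma > 0$, and in particular $\frac{1}{N}\log P^{(F)}(N\in\tau)\to 0$. Combined with the sandwich, this gives $\lim\frac{1}{N}\log E_\beta(e^{\epsilon\imath_N}\delta_N)=F_\beta(\epsilon)$, and $F_\beta(\epsilon)>0$ is already part of Lemma \ref{free_energy_caract}.

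The main obstacle I expect is the careful verification of the hypotheses of the Markov renewal theorem in this non-i.i.d. setting: namely, writing down that $l$ has finite $\pi_1$-mean under $P^{(F)}$, checking aperiodicity of the joint chain $(\overline{T}_n^*,T_n)$ from $K(1)>0$, and collecting the contributions of the different terminal states of the modulating chain when summing $P^{(F)}(N\in\tau,\overline{T}_{\imath_N}^*=y)$ over $y\in E^q$. Alternatively, if one prefers to avoid a direct appeal to the Markov renewal theorem, the same conclusion $\liminf \frac{1}{N}\log P^{(F)}(N\in\tau)\geq 0$ can be extracted by a subadditivity argument on $-\log P^{(F)}(N\in\tau)$ using the Markov property at the last renewal before $N$, much in the spirit of the approximate subadditivity used for $Z^a_{N,\beta,h}$ in the proof of Theorem \ref{annealed_f}; this path requires only a uniform lower bound $P^{(F)}(\overline{T}_1^*=x)\geq c_0>0$ for all $x\in E^q$, which follows again from the finiteness of $E^q$ and the positivity of $K$.
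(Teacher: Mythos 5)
Your proof is correct and follows essentially the same route as the paper: both reduce via Lemma \ref{pinning_F} to showing that $P^{(F)}(N\in\tau)$ decays subexponentially, and both settle this by a renewal-theoretic argument using that the modulating chain on $E^q$ is finite, irreducible, and that the sojourn times have finite mean thanks to the $e^{-F_\beta(\epsilon)t_q}$ reweighting. The only presentational difference is that you invoke a Markov renewal theorem as a black box, whereas the paper effectively proves the needed instance of it: it fixes the state $\id=(1,\dots,1)$, extracts the embedded \emph{ordinary} renewal process $\tau^\theta$ of returns of $(\overline{T}_n)$ to $\id$, checks its inter-arrival mean $m=\sum_k k\,(l\circ\pi_1^{-1})(k)/l(\id)<\infty$, and applies the classical renewal theorem to get $\liminf P^{(F)}(N\in\tau)\geq P^{(F)}(N\in\tau^\theta)\to 1/m>0$. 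One caution on your suggested alternative: approximate subadditivity of $-\log P^{(F)}(N\in\tau)$ would give existence of $\lim\frac{1}{N}\log P^{(F)}(N\in\tau)$, but on its own it does not identify the limit as $0$ (only that it is $\leq 0$, which is what you actually need to prove, not what subadditivity hands you); you would still need a renewal-theorem-type input or an argument such as $\imath_N/N\to 1/c>0$ under $P^{(F)}$ to rule out exponential decay. So the primary argument stands; the subadditivity fallback as sketched is not a complete substitute.
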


This lemma (combined with Lemma \ref{lemma_tilde}) tells us that $$F^a(\beta,-\frac{\beta^2}{2}-\log\lambda(\beta)+\epsilon) = F_{\beta}(\epsilon).$$

\begin{proof}
 Since $P^{(F)}(N \in \tau)\leq 1$, it will be sufficient to prove that \[\liminf_{N\rightarrow \infty} P^{(F)}(N \in \tau) > 0.\]
We use an argument that has been already used in the study of Markov renewal processes arising in the study of periodic pinning (see \cite[Chp VII.4]{Asmussen}, \cite{CaGiZa07} or \cite[Chp 3]{GG_Book}). We choose arbitrarily the state $\id = (1,\ldots,1) \in (\mathbb{N}^*)^q$. Consider $(\theta_n)_{n\geq0}$ the following sequence of stopping times:
\[\theta_0 = \inf\{ n\geq 1 | \overline{T}_n = \id \}\]
\[\theta_{k+1} = \inf\{ n > \theta_k | \overline{T}_n = \id \}\]
Since $(\overline{T}_n)_{n\geq0}$ is positive recurrent under $P^{(F)}$, these stopping times are finite almost surely. If we now define the process $\tau^{\theta}$ by $\tau_n^{\theta} := \tau_{\theta_n}$ then it is clear that \[P^{(F)}(N \in \tau) \geq P^{(F)}(N \in \tau^{\theta})\]
By the strong Markov property, $\tau^{\theta}$ is a (delayed) renewal process whose inter-arrival times are on average equal to 
\begin{align*}
 m := E_{\id}^{(F)}(T_1+ \ldots + T_{\theta_0-1}) & = E_{\id}^{(F)} \sum_{n=1}^{\infty} \pi_1(\overline{T}_n) \mathbf{1}_{\{\theta_0>n\}}\\
& = \sum_{\overline{t}} \pi_1(\overline{t}) E_{\id}^{(F)} \sum_{n=1}^{\infty} \mathbf{1}_{\{\overline{T}_n = \overline{t}, \theta_0>n\}}\\
& = \sum_{\overline{t}}  \pi_1(\overline{t}) \frac{l(\overline{t})}{l(\id)}\\
&= \frac{\sum_{k\geq1}k (l \circ \pi_1^{-1})(k)}{l(\id)} < \infty.
\end{align*}
By the Renewal Theorem, we have
\[P^{(F)}(N \in \tau^{\theta}) \stackrel{N\longrightarrow\infty}{\rightarrow} 1/m > 0\] and the proof is complete.
\end{proof}

Theorem \ref{annealed_critical_curve} is now a direct consequence of Lemma \ref{lemma_tilde} combined with Lemma \ref{free_energy_tilde}.

\subsection{The weak disorder asymptotic: proof of Proposition \ref{asymptotic}}

We now give some lemmas which will be useful for the proof of Proposition \ref{asymptotic}. If $I\subset E^q$ and $x,y\in E^q$ then we will denote by $Q^{*,I}$ the matrix with entries $Q^{*,I}(x,y) = Q^*(x,y) \mathbf{1}_{\{y\in I\}}$. If $M$ is an $n$ by $n$ matrix then $\com(M)$ is the matrix of the cofactors of $M$, i.e. $\com(M)(i,j) = (-1)^{i+j} \det M_{i,j}$ where $M^{i,j}$ is the $n-1$ by $n-1$ matrix obtained by deleting the i-th line and the j-th column of $M$.

\begin{lem}\label{stat_measure}
$Q^*$ is primitive and its invariant probability measure is $K^{\otimes q}(\overline{s}^*) = K(s_1^*)\ldots K(s_q^*)$.
\end{lem}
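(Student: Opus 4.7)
The plan is to handle the two assertions separately: first the primitivity of $Q^*$, by exhibiting a finite power whose entries are all strictly positive, then the invariance of $K^{\otimes q}$, by a direct computation exploiting the shift structure encoded in $\rightsquigarrow$.

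For primitivity, I would observe that a single application of $Q^*$ to $\overline{s}^* = (s_1^*,\ldots,s_q^*)$ shifts the coordinates one position to the left and appends a new coordinate $v^* \in E$, with weight $K(v^*)$. Iterating $q$ times flushes out all of the original coordinates, so that for any target $\overline{t}^* \in E^q$ one obtains
\[
(Q^*)^q(\overline{s}^*, \overline{t}^*) \;=\; \prod_{i=1}^q K(t_i^*).
\]
By the standing assumption $K(n) > 0$ for every $n \geq 1$, each factor $K(t_i^*)$ is strictly positive (in particular $K(\star) = \sum_{n>q} K(n) > 0$), so $(Q^*)^q$ has only strictly positive entries. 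This proves primitivity; as a byproduct, $Q^*$ is irreducible and aperiodic, and the Perron--Frobenius theorem guarantees the uniqueness of the invariant probability measure, so it will suffice to exhibit one candidate.

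For the identification, I would first check that $K^{\otimes q}$ is a probability on $E^q$, which follows from $\sum_{s^* \in E} K(s^*) = K(1) + \ldots + K(q) + K(\star) = \sum_{n \geq 1} K(n) = 1$. For invariance, fix $\overline{t}^* \in E^q$: the constraint $\overline{s}^* \rightsquigarrow \overline{t}^*$ forces $s_i^* = t_{i-1}^*$ for $i = 2,\ldots,q$ and leaves only $s_1^*$ free, so
\begin{align*}
\sum_{\overline{s}^*} K^{\otimes q}(\overline{s}^*)\, Q^*(\overline{s}^*,\overline{t}^*)
&= \sum_{s_1^* \in E} K(s_1^*)\, K(t_1^*)\cdots K(t_{q-1}^*)\, K(t_q^*) \\
&= K(t_1^*)\cdots K(t_q^*) \;=\; K^{\otimes q}(\overline{t}^*),
\end{align*}
which is the desired invariance relation.

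There is no serious obstacle here; the lemma is essentially a sanity check confirming that at $\beta = 0$ the transfer matrix $Q^*$ reduces to the shift-Markov kernel associated with i.i.d.\ samples from $K$. This baseline will presumably serve as the unperturbed reference point used to expand $\lambda(\beta)$ and $\nu_\beta^*$ and obtain the weak-disorder asymptotic of Proposition~\ref{asymptotic}.
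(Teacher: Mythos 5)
Your proof is correct and follows essentially the same route as the paper's: compute $(Q^*)^q(\overline{s}^*,\overline{t}^*)=K^{\otimes q}(\overline{t}^*)>0$ to get primitivity, then verify stationarity of $K^{\otimes q}$ by a one-free-coordinate sum over $s_1^*$. The only cosmetic difference is that you phrase the $(Q^*)^q$ computation as a shift/flush-out argument, while the paper identifies it with the $q$-step transition probability of the chain $(\overline{T}_k^*)$ under the original i.i.d.\ law; these are the same calculation.
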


\begin{proof}
 For all $\overline{t}^* \in E^q$,
\begin{align*}
 (K^{\otimes q}Q_0^*)(\overline{t}^*) &= \displaystyle\sum_{\overline{s}^*\in E^q} K^{\otimes q}(\overline{s}^*) Q_0^*(\overline{s}^*,\overline{t}^*)\\
&= \displaystyle\sum_{s^*\in E} K^{\otimes q}(s^*,t_1^*,\ldots,t_{q-1}^*)K(t_q^*)\\
& = \displaystyle\sum_{s^*\in E} K(s^*)K(t_1^*)\ldots K(t_{q-1}^*)K(t_q^*)\\
& = K^{\otimes q}(\overline{t}^*)
\end{align*}
so $K^{\otimes q}$ is the invariant probability measure of $Q_0^*$. Moreover,
\begin{align*}
 (Q^*)^q(\overline{s}^*,\overline{t}^*) &= P\left( T_{q+1}^*= t_1^* ,\ldots, T_{2q}^* = t_q^* | T_1^* = s_1^*,\ldots, T_q^*= s_q^*\right) \\
&= \mathbb{P}\left( T_{q+1}^*= t_1^* ,\ldots, T_{2q}^* = t_q^* \right)\\
& = K^{\otimes q}(\overline{t}^*)
\end{align*}
which is positive under the assumptions of Section \ref{def_renewal}. Since $(Q_0^*)^q > 0$, $Q_0^*$ is primitive.
\end{proof}

\begin{lem}\label{stat_measure2}
 $\Tr\left( \com(\Id-Q^*)\right) \neq 0$ and for all $x\in E^q$ 
\[\frac{\Tr\left(^t\com(\Id-Q^*)Q^{*,\{x\}}\right)}{\Tr\left(^t\com(\Id-Q^*)\right)} = K^{\otimes q}(x).\]
\end{lem}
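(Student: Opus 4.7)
The plan is to identify $^t\!\com(\Id - Q^*)$ as an explicit rank-one matrix, after which both assertions of the lemma become short algebraic identities.

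First I would set $A := \Id - Q^*$. Lemma \ref{stat_measure} asserts that $Q^*$ is primitive and stochastic, so $1$ is a simple eigenvalue of $Q^*$: the right $1$-eigenspace is spanned by the all-ones vector $\mathbf{1}$, and the left $1$-eigenspace is spanned by $\pi := K^{\otimes q}$. In particular $A$ has rank exactly $|E^q|-1$, so the adjugate $B := {}^t\!\com(A)$ has rank one and is nonzero. The classical identities $A\,B = B\,A = (\det A)\,\Id = 0$ then force every column of $B$ to lie in $\ker A = \mathbb{R}\mathbf{1}$ and every row of $B$ to lie in the left kernel $\mathbb{R}\pi$. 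These two constraints are compatible only if $B_{ij} = \beta\,\pi(j)$ for a common scalar $\beta$, i.e.\ $B = \beta\,\mathbf{1}\pi^T$, with $\beta \neq 0$ because $B \neq 0$.

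The two traces then follow immediately. On the one hand, $\Tr(B) = \beta \sum_x \pi(x) = \beta \neq 0$, and since trace is invariant under transposition this yields $\Tr(\com(A)) \neq 0$ as well. On the other hand, using $\pi Q^* = \pi$, the $y$-th entry of the row vector $\pi^T Q^{*,\{x\}}$ equals $\mathbf{1}_{\{y=x\}}(\pi Q^*)(x) = \mathbf{1}_{\{y=x\}}\pi(x)$, so $B\, Q^{*,\{x\}} = \beta\,\pi(x)\, \mathbf{1}\, e_x^T$ has trace $\beta\,\pi(x)$. The ratio in the statement is then $\beta\pi(x)/\beta = \pi(x) = K^{\otimes q}(x)$, as desired.

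The only non-routine input is the ``rank-one adjugate'' identification of $B$ in the second paragraph: it relies on $A$ having co-rank exactly one, which is precisely where primitivity of $Q^*$ (equivalently, simplicity of its Perron-Frobenius eigenvalue) from Lemma \ref{stat_measure} is essential. Everything else is careful bookkeeping of what truncating the columns of $Q^*$ does to a left invariant row vector, so I do not anticipate any serious technical obstacle.
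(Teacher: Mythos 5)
Your proof is correct and follows essentially the same approach as the paper: both rest on identifying ${}^t\com(\Id - Q^*)$ as a rank-one matrix whose rows are all proportional to the stationary distribution $K^{\otimes q}$, and then computing the two traces. The only difference is in how the rank-one structure is obtained — you derive the factorization $B = \beta\,\mathbf{1}\pi^T$ directly from $AB = BA = 0$ together with $\operatorname{corank}(A)=1$, which is a clean self-contained argument, whereas the paper cites a corollary from Seneta for the fact that all rows of ${}^t\com(\Id-Q^*)$ equal a common left eigenvector $L$ and then uses the identity $(\Id-Q^*)\,{}^t\com(\Id-Q^*)=0$ to relate $\sum_x m(x)$ to $\Tr\left({}^t\com(\Id - Q^*)\right)$.
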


\begin{proof}
 In this proof we will use the properties of the Perron-Frobenius eigenvalue of a primitive matrix, that one can find for example in \cite{Seneta}.

We define  for all $x \in E^q$: \[p(x^*) := \frac{\Tr\left(^t\com(\Id-Q^*)Q^{*,\{x\}}\right)}{\Tr\left(^t\com(\Id-Q^*)\right)}.\] By Lemma \ref{stat_measure}, we only need to prove that $p$ is the invariant probability measure of $Q^*$. 

Since $Q^*$ is stochastic, $1$ is clearly a right eigenvalue of $Q^*$ with associated eigenvector $\id$ (the vector with $1$ on all its components). Moreover, $Q^*$ is primitive (Lemma (\ref{stat_measure})) so the Perron-Frobenius eigenvalue exists and all we have to prove is that $\vert \lambda \vert \leq 1$ for every (possibly complex) eigenvalue of $Q^*$. Indeed, if $v$ is an eigenvector associated with such an eigenvalue, and $x\in E^q$ is such that $v(x) = \displaystyle\max_{y\in E^q} \vert v(y) \vert$ then
\[\lambda v(x) = \displaystyle\sum_{y\in E^q} Q^*(x,y) v(y)\]
so $\vert \lambda \vert \vert v(x) \vert \leq \vert v(x) \vert$, i.e $\vert \lambda \vert \leq 1$. This proves that 1 is the Perron-Frobenius eigenvalue of $Q^*$, with associated eigenspace $\mathbb{R}\id$.

Now, from \cite[Ch. 1, Corollary 2]{Seneta}, we have that the rows of $^t\com(\Id-Q^*)$ are all equal to the same left eigenvector (for the eigenvalue $1$) of $Q^*$, that we will denote by $L$. A first consequence is that $\Tr\left(\com(\Id-Q^*)\right)\neq0$ because the entries of $L$ are either all positive or all negative. Another consequence is that if we define 
\[m = (m(x))_{x\in E^q} = \left(\Tr\left(^t\com(\Id-Q^*)Q^{*,\{x^*\}}\right)\right)_{x\in E^q}\]
then $m(x)=L(x)$ for all $x\in E^q$. Moreover, from the relation
\[(\Id-Q^*)^t\com(\Id-Q^*) = 0\]
we deduce that $\displaystyle\sum_{x\in E^q} m(x)= \Tr\left(^t\com(\Id-Q^*)\right)$. Since $p$ is simply $m$ renormalized by $\sum_{x \in E^q} m(x)$, it is the invariant probability of $Q^*$.
\end{proof}

\begin{proof}[Proof of Proposition \ref{asymptotic}]
In what follows, we will use the notations $Q'_0$ and $Q''_0$ as shortcuts for $\frac{\partial Q^*_{\beta}}{\partial\beta}(0)$ and $\frac{\partial^2 Q^*_{\beta}}{\partial\beta^2}(0)$.
First we will show that $\beta \mapsto \lambda(\beta)$ is infinitely differentiable ($\mathcal{C}^2$ would be enough). Let's define $\phi(\beta,\lambda) = \det(\lambda\Id -Q^*_{\beta} )$ so that $\phi(\beta,X)$ is the characteristic polynomial of $Q^*_{\beta}$, and $\phi(\beta,\lambda(\beta)) = 0$ for all $\beta$. The Perron-Frobenius eigenvalue of a nonnegative primitive matrix being a simple root of its characteristic equation, $\frac{\partial\phi}{\partial\lambda}(\beta,\lambda(\beta))\neq0$ for all $\beta\geq0$. Since $\phi$ is infinitely differentiable, the same holds for $\lambda$ by the Implicit Function Theorem.

Now, a straightforward computation shows that (we use that $\lambda(0)=1$)
\begin{eqnarray*}
\frac{\partial}{\partial\beta}\log\lambda(\beta)\lvert_{\beta=0}& = &\lambda'(0)\\
\frac{\partial^2}{\partial\beta^2}\log\lambda(\beta)\lvert_{\beta=0}& = & \lambda''(0) - \lambda'(0)^2.
\end{eqnarray*}
All we need to show then is 
\begin{eqnarray}
 \lambda'(0) &=& 0 \label{lambda_prime}\\
 \lambda''(0) &=& 2\sum_{n=1}^q \rho_n \mathbb{P}(n\in\tau). \label{lambda_seconde}
\end{eqnarray}
By derivating the relation $\phi(\beta,\lambda(\beta)) = 0$ we obtain
\begin{equation*}
 \frac{\partial\phi}{\partial\beta}(0,1) + \lambda'(0)\frac{\partial\phi}{\partial\lambda}(0,1) = 0.
\end{equation*}
We already know that $\frac{\partial\phi}{\partial\lambda}(0,1)\neq 0$ and since $Q'_0 = 0$ then $\frac{\partial\phi}{\partial\beta}(0,1)= 0$, which leads to (\ref{lambda_prime}).

All we have to do now is to prove (\ref{lambda_seconde}). A Taylor expansion of $\det(\lambda(\beta)\Id-Q^*_{\beta})$ gives:
\begin{align*}
 \det(\lambda(\beta)\Id-Q^*_{\beta}) &= \det\left(\Id - Q^* + (\lambda''(0)\Id-Q''_0)\frac{\beta^2}{2} + o(\beta^2)\right)\\
& = \Tr\left(^t\com(\Id - Q^*)(\lambda''(0)\Id-Q''_0)\right) \frac{\beta^2}{2} + o(\beta^2)
\end{align*}
where we have used the differential of the determinant: $\det(A + H) = \det(A) + \Tr(^t\com(A)H) + o(\Vert H \Vert)$. But since $\det(Q^*_{\beta}-\lambda(\beta)\Id) = 0$ we have
\begin{equation*}
 \Tr\left(^t\com(\Id - Q^*)(\lambda''(0)\Id-Q''_0)\right) = 0
\end{equation*}
which yields
\begin{equation*}
 \lambda''(0) = \frac{\Tr(^t\com(\Id - Q^*)Q''_0)}{\Tr(^t\com(\Id - Q^*))}.
\end{equation*}
Note that $\Tr(^t\com(\Id - Q^*)) \neq 0$ (Lemma \ref{stat_measure}).

Let's now consider $Q''_0$ as a function of $(\rho_n)_{1\leq n \leq q}$. Observe that \[Q''_0(\overline{s}^*,\overline{t}^*) = 2 G(\overline{t}^*) Q^*_0(\overline{s}^*,\overline{t}^*)\] so $Q''_0$ linearly depends on $(\rho_n)_{1\leq n \leq q}$. We have then
\begin{equation*}
 Q''_0(\rho_1,\ldots,\rho_q) = Q''_0(0,\ldots,0,\rho_q) + Q''_0(\rho_1,\ldots,\rho_{q-1},0)
\end{equation*}
The result of the theorem is clearly true for $q=1$ (remember that we have an explicit expression of $h_c^a(\beta)$ in this case, see Proposition \ref{q_equals_1} ) so we can suppose that it is true for a $(q-1)$-order moving average and show that the result holds for $q$. The induction hypothesis then implies
\begin{equation*}
 \frac{\Tr(^t\com(\Id - Q^*)Q''_0(\rho_1,\ldots,\rho_{q-1},0))}{\Tr(^t\com(\Id - Q^*))} = 2\sum_{n=1}^{q-1} \rho_n P(n\in\tau)
\end{equation*}
so the only thing left to prove is that
\begin{equation}\label{aux2}
 \frac{\Tr(^t\com(\Id - Q^*)Q''_0(0,\ldots,0,\rho_q))}{\Tr(^t\com(\Id - Q^*))} = 2\rho_q P(q\in\tau).
\end{equation}
Let's define $I_q = \{\overline{s}^*\in E^q \hbox{ s.t. } \rho_q \hbox{ appears in } G(\overline{s}^*)\}$ and notice that
\begin{equation*}
 Q''_0(0,\ldots,0,\rho_q) = 2 \rho_q Q^{*,I_q}.
\end{equation*}
We obtain from Lemma \ref{stat_measure2}:
\begin{align*}
 \frac{\Tr(^t\com(\Id - Q^*)Q^{*,I_q})}{\Tr(^t\com(\Id - Q^*))} &= \displaystyle\sum_{\overline{t}^*\in I_q} K^{\otimes q}(\overline{t}^*)\\ &= P(q\in\tau)
\end{align*}
which proves (\ref{aux2}).
\end{proof}

\section{The irrelevance regime}\label{irr_section}

In this section we will work with free partition functions (remove the $\delta_N$ in definitions (\ref{def1}),(\ref{def2}) and (\ref{def3})). This has no incidence on the free energy.

\subsection{Introduction and statement of the result}

The following result states that under some assumptions on $K$ and $\beta$, quenched and annealed critical curves and exponents are the same. This is the irrelevance regime.

\begin{theo}\label{result}
If $\omega$ is a gaussian moving average of finite order $q$ and if $\alpha\in(0,1/2)$ or if $\alpha=1/2$ and $L$ is such that
\begin{equation*}
\sum_{n=1}^{\infty} \frac{1}{nL(n)^2} < \infty
\end{equation*}
then there exists $\beta_0>0$ such that for $\beta\leq\beta_0$, $h_c(\beta) = h_c^a(\beta)$ and
\begin{equation}\label{lim}
\lim_{h\rightarrow h_c^a(\beta)^+} \frac{\log(F(\beta,h))}{\log(h-h_c^a(\beta))} = \frac{1}{\alpha}
\end{equation}
\end{theo}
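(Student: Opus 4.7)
The plan is to use the second moment method applied to the free partition function (removing $\delta_N$ does not change the free energy, as noted at the start of this section). Fix $\beta>0$, write $h=h_c^a(\beta)+\epsilon$ for small $\epsilon>0$, and use Gaussianity of $\omega$ to integrate out disorder in $\mathbb{E}[Z_{N,\beta,h,\omega}^2]$ and in $(Z^a_{N,\beta,h})^2$. Comparing these would yield
\[
\frac{\mathbb{E}[Z_{N,\beta,h,\omega}^2]}{(Z^a_{N,\beta,h})^2} \;=\; E^a_N \otimes E'^a_N\!\left[\exp\!\left(\beta^2 \sum_{i,j=1}^{N} \rho_{|j-i|}\,\delta_i \delta'_j\right)\right],
\]
where $E^a_N$ and $E'^a_N$ are two independent copies of the annealed polymer measure. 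Because $\rho_k$ vanishes for $|k|>q$, the exponent is dominated by $\beta^2 C(q,\rho)$ times the ``$q$-neighbourhood intersection'' $I_N:=\sum_{|k|\leq q}\sum_{n=1}^N \delta_n \delta'_{n+k}$ of the two replicas.

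Next I would exploit the material of Section~\ref{results}: the annealed measure at $h=h_c^a(\beta)+\epsilon$ has the same law as $P^{(F)}$ with $F=F_\beta(\epsilon)$ (see Lemma~\ref{pinning_F}), under which $\tau$ is a delayed Markov renewal process whose modulating chain on the finite state space $E^q$ is irreducible and positive recurrent. The inter-arrival tail keeps a factor $K(n)e^{-Fn}$, so at $F=0$ it is exactly $K$. From two independent copies of this Markov renewal process I would build a joint modulating chain on pairs of $q$-tuples, restarted at common points of $\tau$ and $\tau'$, in the spirit of the proof of Lemma~\ref{free_energy_tilde}; this should present $\tau\cap\tau'$ as a renewal-type process whose step has a tail of order $L(n)^2/n^{1+2\alpha}$, via a Doney-type convolution estimate uniform over the (finitely many) modulating states. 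The hypothesis $\alpha<1/2$, or $\alpha=1/2$ with $\sum_n 1/(nL(n)^2)<\infty$, is then precisely what makes $\mathbb{E}[I_N]$ bounded uniformly in $N$.

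Once this is in place, a Kahane-type exponential tilt along the intersection renewal gives, for $\beta$ smaller than some $\beta_0$, a uniform bound $\sup_N \mathbb{E}[Z_N^2]/(Z^a_N)^2 \leq C(\beta)<\infty$. Paley--Zygmund then yields $\mathbb{P}(Z_N \geq \tfrac{1}{2}Z^a_N)\geq 1/(4C(\beta))$, and Gaussian concentration (note that $\log Z_N$ is Lipschitz in the i.i.d.\ Gaussian sequence $(\varepsilon_n)$ with constant of order $\beta\sqrt{N}$, since $\omega$ is a finite linear combination of the $\varepsilon_n$'s) shows that $\log Z_N/N$ concentrates around $F(\beta,h)$. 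Together with (\ref{ann_bound}) this forces $F(\beta,h)=F^a(\beta,h)$ for every $h>h_c^a(\beta)$, hence $h_c(\beta)=h_c^a(\beta)$. For (\ref{lim}), the same Markov renewal analysis applied to the characterisation $\Lambda(\beta,F_\beta(\epsilon))=e^{-\epsilon}$ of Lemma~\ref{free_energy_caract} should yield $1-\Lambda(\beta,F)\sim c(\beta)\,F^{\alpha}\tilde L(1/F)$ as $F\downarrow 0$, and Karamata inversion gives $F_\beta(\epsilon)\sim c'(\beta)\,\epsilon^{1/\alpha}$; since $F=F^a$ in the irrelevance regime, this transfers to the quenched free energy and yields (\ref{lim}).

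The main obstacle will be the Doney-type estimate for the intersection of two \emph{Markov} renewal processes. In the i.i.d.\ case $\tau\cap\tau'$ is itself a renewal process and Doney's theorem delivers the $2\alpha$ exponent directly; here one must track pairs of $q$-tuples of modulating states and show that the joint return-to-common-point distribution has tail $L(n)^2/n^{1+2\alpha}$ with constants uniform in the modulating state, exploiting the finite-state positive recurrence of $(\overline{T}_n^*)$. Combined with the need to absorb the $e^{C\beta^2}$ factor coming from the finite-range correlation piece by choosing $\beta$ small, this is what restricts the regime to $\beta\leq\beta_0$ and constitutes the technical heart of the proof.
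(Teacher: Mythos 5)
Your proposal captures the overall architecture of the paper's argument (second moment method, replica intersections, exploiting the Markov renewal structure of the annealed measure), but the crucial end-game step is not correct as stated. You claim that the uniform second-moment bound at $h=h_c^a(\beta)$, together with Paley--Zygmund and Gaussian concentration of $\log Z_N/N$, ``forces $F(\beta,h)=F^a(\beta,h)$ for every $h>h_c^a(\beta)$.'' This does not follow: the bound $\sup_N\mathbb{E}[Z_N^2]<\infty$ is available only at the annealed critical point, where $Z_N^a$ is bounded; for $h>h_c^a(\beta)$ the annealed partition function grows exponentially and the ratio $\mathbb{E}[Z_N^2]/(Z_N^a)^2$ is not controlled. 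Concentration plus Paley--Zygmund at $h_c^a$ only tells you that $Z_N$ does not collapse at the annealed critical point; it gives no quantitative information about the rate at which $F(\beta,h)$ vanishes as $h\downarrow h_c^a(\beta)$. The paper bridges this gap by a different device: it proves (Proposition~\ref{minoration}) that under the quenched polymer measure at $h_c^a(\beta)$ the number of contacts exceeds $N^{\gamma}$ with probability bounded below uniformly in $N$, for every $\gamma<\alpha$, and then invokes the deterministic statement of Proposition~\ref{limsup} (taken from \cite{lacoin_irrelevance}) which converts this lower bound on contact counts directly into the bound $\limsup \log F(\beta,h)/\log(h-h_c^a(\beta))\le 1/\alpha$; the equality $h_c=h_c^a$ comes for free from this. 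Your proposal is missing this step entirely, and it is not cosmetic -- without it there is no way to pass from second-moment information at the critical point to the critical exponent.

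A second, more technical difference is in how the two-replica exponential moment is actually controlled. You propose a ``Doney-type convolution estimate uniform over modulating states'' showing the intersection step has tail $L(n)^2/n^{1+2\alpha}$, whence finite exponential moments for small $\beta$. The paper instead shows that $\tau^{(1)}\cap\tau^{(2)}$ under $P_\beta^{\otimes 2}$ is a Markov renewal process with finite modulating state space $(E^q)^2$, bounds $P_\beta^\cap(\sum\delta_n\ge N)\le c\,\theta(\beta)^N$ in terms of the Perron--Frobenius eigenvalue $\theta(\beta)$ of the sub-Markov kernel $\phi_\beta(0)$ (Proposition~\ref{theta_beta}), and then reduces the whole problem to proving that $\theta(\beta)\to\theta(0)<1$ as $\beta\to 0$. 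That continuity is not elementary and is proved via Fourier transforms and an $L^2$-domination argument (Proposition~\ref{Fourier}); you do not anticipate this difficulty, and a bare tail asymptotic like (\ref{c1}) would not immediately give the needed \emph{uniformity in $\beta$}, since the constant in front of the $L(n)/n^{1+\alpha}$ asymptotic depends on $\beta$ through the invariant measure $l_\beta^*$ (see~(\ref{tail})). Both routes are in principle viable, but you would need to supply the uniformity argument, which is exactly where the real work lies.

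Finally, a smaller issue: you normalise by $(Z_N^a)^2$ and work under the annealed polymer measure $E_N^a\otimes E_N'^a$; the paper instead changes measure once and for all to the stationary Markov renewal law $P_\beta$ (via Lemmas~\ref{lemma_tilde} and~\ref{pinning_F}) and bounds $\mathbb{E}[Z_N^2]$ directly by $C_\beta\,E_\beta^{\otimes 2}[\exp(\beta^2 C_N)]$. These are close in spirit, but the paper's choice avoids having to compare finite-volume polymer measures at different $N$, which would otherwise need a separate equivalence argument.
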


Disorder irrelevance has been proved by several authors, with different methods, in the case of i.i.d. disorder (see \cite{lacoin_irrelevance},\cite{Fabio_replica} and \cite{Alexander_Sido}). A key element is the control of the second moment of the partition function at the annealed critical point, which is linked to the exponential moments of the number of intersections between two replicas of the initial renewal process $\tau$. As in \cite{lacoin_irrelevance}, we will establish (\ref{lim}) by proving separately the $\liminf$ and the $\limsup$ parts. The $\liminf$ part is just a consequence of Jensen's inequality $F(\beta,h) \leq F^a(\beta,h)$ and of the behaviour of $F^a(\beta,h)$ near the annealed critical point. The $\limsup$ part relies on the control of the second moment. In our case, additional difficulties arise from the presence of a Markov renewal process instead of a classical renewal process at the annealed critical point. Moreover the law of this Markov renewal process depends on $\beta$, so we will tackle a problem of continuity in $\beta$ (see end of Section \ref{Intersection_markovrenewal}). Once the second moment is controlled, we use arguments from \cite{lacoin_irrelevance} to conclude. Unlike what the title of \cite{lacoin_irrelevance} suggests, there is no martingale involved in our problem.

Henceforth, we assume $\alpha$ satisfies the assumption of Theorem \ref{result}.

\subsection{The $\liminf$ part}

The following proposition tells us that at the neighbourhood of the annealed critical point, the annealed free energy has the same behaviour as the homogeneous one.
\begin{pr}\label{ann_expo}
There exists a slowly varying function $L'$ such that
\begin{equation*}
F^a(\beta,h_c^a(\beta)+\Delta) \stackrel{\Delta\searrow 0}{\sim} L'(\Delta)\Delta^{1/\alpha}.
\end{equation*}
\end{pr}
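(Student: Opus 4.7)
The plan is to reduce to studying the Perron-Frobenius eigenvalue $\Lambda(\beta, F)$ near $F = 0^+$ and then invert a regularly varying function. Combining Theorem \ref{annealed_critical_curve} with Lemmas \ref{lemma_tilde} and \ref{free_energy_tilde}, one has $F^a(\beta, h_c^a(\beta) + \Delta) = F_\beta(\Delta)$, where by Lemma \ref{free_energy_caract} the quantity $F_\beta(\Delta)$ is the unique positive real satisfying $\Lambda(\beta, F_\beta(\Delta)) = e^{-\Delta}$. Since $F\mapsto\Lambda(\beta, F)$ is continuous and strictly decreasing with $\Lambda(\beta, 0) = 1$, we have $F_\beta(\Delta) \searrow 0$ as $\Delta \searrow 0$, and the problem is equivalent to determining the rate at which $1 - \Lambda(\beta, F)$ vanishes as $F \to 0^+$, and then inverting.

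The key step is a first-order Perron-Frobenius perturbation. Since $\tilde{Q}^*_\beta$ is stochastic and irreducible on the finite set $E^q$, its Perron-Frobenius eigenvalue is $1$ with right eigenvector $\mathbf{1}$ and left eigenvector $\mu_\beta^*$ (the invariant probability, normalised so $\mu_\beta^*\cdot\mathbf{1} = 1$), and $\tilde{Q}^*_{\beta, F}(\overline{s}^*, \overline{t}^*) = \tilde{Q}^*_\beta(\overline{s}^*, \overline{t}^*)\, e^{-F\phi_F(t_q^*)}$ converges to $\tilde{Q}^*_\beta$ in operator norm as $F\to 0^+$. Since the Perron-Frobenius eigenvalue is a simple root of the characteristic polynomial, the classical first-order expansion yields, as $F\to 0^+$,
\begin{equation*}
1 - \Lambda(\beta, F) \sim \sum_{\overline{t}^* \in E^q} \mu_\beta^*(\overline{t}^*)\bigl(1 - e^{-F\phi_F(t_q^*)}\bigr).
\end{equation*}
For $t_q^*\in\{1, \ldots, q\}$ the summand is $O(F)$; for $t_q^* = \star$, the defining relation (\ref{phi_star}) together with the standard Tauberian-type estimate for sums against regularly varying tails gives $1 - e^{-F\phi_F(\star)} \sim c_\alpha\, L(1/F)\, F^\alpha/K(\star)$ for an explicit constant $c_\alpha > 0$ (the boundary case $\alpha = 1/2$ only affects $L$ by a slowly varying factor). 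Since $\alpha < 1$ the $\star$-contribution dominates, yielding $1 - \Lambda(\beta, F) \sim c(\beta)\, L(1/F)\, F^\alpha$ with $c(\beta) > 0$ an explicit positive constant proportional to $\mu_\beta^*(\{\overline{t}^*: t_q^* = \star\})$.

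The proposition then follows by inversion of regular variation. Since $-\log \Lambda(\beta, F) \sim 1 - \Lambda(\beta, F)$ and $-\log \Lambda(\beta, F_\beta(\Delta)) = \Delta$, the map $F \mapsto -\log\Lambda(\beta, F)$ is regularly varying of index $\alpha$ at $0^+$; by the standard asymptotic inverse for regularly varying functions, its inverse $\Delta \mapsto F_\beta(\Delta)$ is regularly varying of index $1/\alpha$, giving $F_\beta(\Delta) \sim L'(\Delta) \Delta^{1/\alpha}$ for a slowly varying $L'$ built explicitly from $L$ and $c(\beta)$. The main technical obstacle is making the "$\sim$" in the perturbative expansion rigorous, since $F \mapsto e^{-F\phi_F(\star)}$ is only continuous (and not differentiable) at $F = 0$, so analytic perturbation theory does not apply directly. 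I would bypass this by matching upper and lower bounds on $1 - \Lambda(\beta, F)$ using the Collatz-Wielandt characterisation appearing in the proof of Lemma \ref{free_energy_caract}, with $\mathbf{1}$ and $\mu_\beta^*$ as test vectors, combined with the continuity of the Perron-Frobenius left and right eigenvectors under norm perturbations on the finite state space $E^q$.
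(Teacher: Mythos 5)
Your approach is essentially the same as the paper's: both start from the implicit equation $\Lambda(\beta,F^a) = e^{-\Delta}$, both identify the dominant $F^{\alpha}$ contribution as coming from the $t_q^* = \star$ entries of $\tilde{Q}^*_{\beta,F} - \tilde{Q}^*_{\beta}$ via an Abelian estimate, and both use a first-order Perron--Frobenius perturbation --- your explicit formula $1 - \Lambda(\beta,F) \sim \sum_{\overline{t}^*} \mu_\beta^*(\overline{t}^*)\bigl(1 - e^{-F\phi_F(t_q^*)}\bigr)$ is exactly the paper's $D\Lambda_{\tilde{Q}^*_{\beta}}(\tilde{Q}^*_{\beta,F} - \tilde{Q}^*_{\beta})$ written out in terms of the left and right eigenvectors. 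One remark: the non-differentiability of $F \mapsto \Lambda(\beta,F)$ at $F=0$ that you flag as a technical obstacle is in fact already handled by the paper's framing, since $\Lambda$ is a smooth (indeed analytic, by simplicity of the eigenvalue) function of the $(q+1)^{2q}$ matrix entries, the perturbation $H = \tilde{Q}^*_{\beta,F} - \tilde{Q}^*_{\beta}$ goes to zero in norm, and the linear term $D\Lambda(H)$ is of the same order $F^{\alpha}L_\star(1/F)$ as $\|H\|$, so the first-order Taylor expansion in the \emph{matrix} argument gives the asymptotic equivalence directly; your Collatz--Wielandt sandwich would also work, but is not needed.
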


\begin{proof}
The annealed free energy is defined by the implicit equation
\begin{equation*}
\Lambda(\beta,F^a(\beta,h_c^a(\beta)+\Delta)) = e^{-\Delta}
\end{equation*}
where $\Lambda(\beta,F)$ is the Perron-Frobenius eigenvalue of $\tilde{Q}^*_{\beta,F}$ (see Lemma \ref{free_energy_caract} and \ref{free_energy_tilde}). This can be rewritten as:
\begin{equation*}
1 - \Lambda(\beta,F^a(\beta,h_c^a(\beta)+\Delta)) = 1 - e^{-\Delta}
\end{equation*}
and since the right-hand term is of the order of $\Delta$ when $\Delta$ goes to $0$, it is enough to prove that the left-hand term is of the order of $\Delta^{\alpha}$.Indeed, if $\overline{t}^*$ is such that $t_q^*\in\{1,\ldots,q\}$ then
\begin{equation*}
\tilde{Q}^*_{\beta,F}(\overline{s}^*,\overline{t}^*) -\tilde{Q}^*_{\beta}(\overline{s}^*,\overline{t}^*) \sim_{F\searrow0} -\hbox{cste } F t_q
\end{equation*}
but if $t_q^* = \star$, we have by Abelian arguments
\begin{equation*}
\tilde{Q}^*_{\beta,F}(\overline{s}^*,\overline{t}^*) -\tilde{Q}^*_{\beta}(\overline{s}^*,\overline{t}^*) = -L_{\star}(1/F)F^{\alpha}
\end{equation*}
where $L_{\star}$ is a slowly varying function. We conclude the proof by writing
\begin{align*}
\Lambda(\beta,F) - 1 &= \Lambda(\tilde{Q}^*_{\beta,F}) - \Lambda(\tilde{Q}^*_{\beta})\\
& \sim_{F \searrow 0} D\Lambda_{\tilde{Q}^*_{\beta}}(\tilde{Q}^*_{\beta,F} - \tilde{Q}^*_{\beta})\\
& = -\hbox{cste }L_{\star}(1/F)F^{\alpha}.
\end{align*}
where $\Lambda$ is a differentiable function of the $(q+1)^{2q}$ entries of positive matrices.
\end{proof}

\subsection{The $\limsup$ part}

We adopt the following notations:
\begin{align*}
K_{\beta,x,y}(n) = P_{\beta}(T_k = n, \overline{T}^*_{k-q+1} = y | \overline{T}^*_{k-q} = x)
\end{align*}
and
\begin{equation*}
P_{\beta,x,y}(n\in\tau) = \sum_{k\geq0} P_{\beta}(\tau_k = n, \overline{T}^*_{k-q+1} = y | \overline{T}^*_{1-q} = x)
\end{equation*}
This section is organized as follows: in a first part we look at the intersection between two replicas of a Markov renewal process under the law $P_{\beta}$. From this we control in a second part the second moment of the partition function at the annealed critical point. In a last part, we exploit this result to obtain the $\limsup$ part of Theorem \ref{result}.

\subsubsection{Intersection of Markov renewal processes}\label{Intersection_markovrenewal}

The main result of this part is:

\begin{pr}\label{intersection}
There exists $\beta_0>0$ such that for all $\beta \leq \beta_0$ and for all $l\in\{0,\ldots,q\}$,
\[E_{\beta}^{\otimes 2}\left( \exp(\beta^2 \sum_{n\geq 1} \delta_n^{(1)}\delta_{n+l}^{(2)}) \right) < \infty.\]
\end{pr}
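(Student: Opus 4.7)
The plan is to recast the expectation as the Green function of an auxiliary finite-state Markov renewal process built from two independent copies of $\tau$ under $P_\beta$, and then to conclude by a Perron-Frobenius spectral-radius argument combined with continuity in $\beta$ at $\beta = 0$.

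First, I would set $X = \sum_{n\geq 1}\delta_n^{(1)}\delta_{n+l}^{(2)}$ and introduce the intersection set $\mathcal{I}_l = \{n \geq 1 : \delta_n^{(1)} = \delta_{n+l}^{(2)} = 1\}$. By the Markov renewal structure of Section \ref{results}, under $P_\beta^{\otimes 2}$ the successive elements of $\mathcal{I}_l$, together with the pair of $q$-tuple states $(\overline{T}^{*,(1)}, \overline{T}^{*,(2)})$ of the two chains at the current intersection, form a (delayed) bivariate Markov renewal process on the finite state space $E^q \times E^q$. Its semi-Markov kernel $\hat{K}_{\beta,l}$ factorizes into a sum of products of one-copy semi-Markov kernels after summing over intermediate states, and I would write $\mathbf{M}_{\beta,l}(X,Y) := \sum_{n\geq 1} \hat{K}_{\beta,l}((X,Y), n)$ for the (substochastic) transition matrix of the embedded chain. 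Using the identity $\exp(\beta^2 X) = \prod_n\bigl(1 + (e^{\beta^2}-1)\delta_n^{(1)}\delta_{n+l}^{(2)}\bigr)$ and decomposing on the successive intersections yields
\[E_\beta^{\otimes 2}\!\left[\exp(\beta^2 X)\right] \leq C_\beta\,\bigl\langle \pi_\beta,\,(I - e^{\beta^2} \mathbf{M}_{\beta,l})^{-1}\mathbf{1}\bigr\rangle,\]
for an explicit initial distribution $\pi_\beta$ on $E^q \times E^q$. This is finite as soon as the Perron-Frobenius eigenvalue $\rho(\mathbf{M}_{\beta,l})$ satisfies $e^{\beta^2}\rho(\mathbf{M}_{\beta,l}) < 1$.

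Next, at $\beta = 0$ one checks (using $\lambda(0)=1$ and that $\nu_0$ is a constant Perron-Frobenius eigenvector of the stochastic matrix $Q^*$) that $P_0$ is exactly the original i.i.d. renewal $P$, so
\[E_0^{\otimes 2}[X] = \sum_{n\geq 1} P(n\in\tau)\,P(n+l\in\tau),\]
which under the hypothesis on $(\alpha, L)$ in Theorem \ref{result} is finite thanks to Doney's asymptotic $P(n\in\tau) \sim C\,n^{\alpha - 1}/L(n)$ in the null-recurrent regime (and $P(n\in\tau) \to 1/E[T_1]$ in the positive-recurrent one). Hence $\rho(\mathbf{M}_{0,l}) < 1$ for each $l \in \{0, \ldots, q\}$. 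Since $\tilde{Q}_\beta$, $\lambda(\beta)$ and $\nu_\beta$ depend smoothly on $\beta$ in a neighborhood of $0$, a dominated-convergence argument gives continuity of $\beta \mapsto \mathbf{M}_{\beta,l}$ at $0$, and classical perturbation theory for isolated eigenvalues of finite nonnegative matrices gives continuity of $\rho(\mathbf{M}_{\beta,l})$. One then picks $\beta_0 > 0$ so small that $e^{\beta^2}\rho(\mathbf{M}_{\beta,l}) < 1$ for all $\beta \leq \beta_0$ and all $l \in \{0,\ldots, q\}$; only finitely many values of $l$ need to be handled, so a single $\beta_0$ suffices.

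The main obstacle is the continuity step itself, which is precisely the difficulty flagged by the author at the end of Section \ref{Intersection_markovrenewal}: each entry $\mathbf{M}_{\beta,l}(X,Y)$ is an infinite sum over all possible gaps to the next intersection, so passing $\beta \to 0$ inside that sum requires a $\beta$-uniform integrable majorant for $\hat{K}_{\beta,l}((X,Y),n)$. The right domination should come from the uniform boundedness of $\tilde{Q}_\beta$, $\nu_\beta$ and $\lambda(\beta)^{-1}$ on a neighborhood of $\beta=0$, combined with the fact that the inter-arrival tail under $P_\beta$ is still controlled, up to a multiplicative constant, by the original kernel $K$.
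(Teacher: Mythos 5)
Your overall architecture matches the paper's: recast the exponential moment as a quantity governed by a transfer-type matrix $\mathbf{M}_{\beta,l}$ (the paper's $\phi_\beta(0)$, i.e. the matrix of $P^{\cap}_{\beta,x,y}(\tau_1<\infty)$) for the intersection Markov renewal process on $(E^q)^2$; check its spectral radius is $<1$ at $\beta=0$; and conclude by continuity at $\beta=0$ and the reduction of $l\neq 0$ to $l=0$ by translation. That much is right.

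But the continuity step, which you correctly flag as \emph{the} obstacle, is not actually resolved by the route you sketch, and the route is unlikely to work as stated. You propose to prove continuity of $\mathbf{M}_{\beta,l}(X,Y)=\sum_n K^{\cap}_{\beta,X,Y}(n)$ by dominating the intersection semi-Markov kernel $K^{\cap}_{\beta,X,Y}(n)$ uniformly in $\beta$, using control of the one-copy inter-arrival tail under $P_\beta$ in terms of $K$. The intersection kernel, however, is not a one-copy object: it is the probability that the \emph{first} common point of $\tau^{(1)}$ and $\tau^{(2)}-l$ occurs at distance $n$, which involves conditioning on absence of earlier intersections and does not factorize into single-copy kernels. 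It is also the wrong object to control directly in the null-recurrent regime, where it has heavy tails. The paper sidesteps this precisely by \emph{not} working on the kernel: it proves (Proposition \ref{varphi_finite}) the inversion formula $\theta(\beta)=1-\vartheta(\beta)^{-1}$, where $\vartheta(\beta)$ is the Perron--Frobenius eigenvalue of the Green-function matrix $\varphi_\beta(0)$ whose entries are $\sum_n P^{\cap}_{\beta,x,y}(n\in\tau)$. The key gain is that the intersection Green function \emph{does} factorize, $P^{\cap}_{\beta,x,y}(n\in\tau)=P_{\beta,x_1,y_1}(n\in\tau)\,P_{\beta,x_2,y_2}(n\in\tau)$, so one is reduced to continuity in $\beta$ of one-copy Green functions. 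That continuity is then established via Fourier analysis: Proposition \ref{Fourier} writes $P_{\beta,x,y}(n\in\tau)$ as a Fourier coefficient of $(\Id-\hat\phi_\beta(\theta))^{-1}$, establishes the $\theta$-a.e.\ domination (\ref{domination}) uniformly in small $\beta$, and passes to the limit in $L^2(-\pi,\pi)$ so that Parseval's identity yields convergence of the series $\sum_n P^{\cap}_{\beta,x,y}(n\in\tau)$. These two ideas---the inversion to the Green-function matrix, and the Fourier/Parseval argument with the resolvent domination---are what actually make the continuity step go through, and both are missing from your proposal. Your preliminary observation that $\rho(\mathbf{M}_{0,l})<1$ via finiteness of $\sum_n P(n\in\tau)P(n+l\in\tau)$ is correct and matches the paper's item (1) in Proposition \ref{theta_beta}, as is the reduction from general $l$ to $l=0$.
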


As it will be explained further in the proof, it is enough to focus on the case $l=0$, when the term inside the exponential is the number of intersections of two independent copies of a Markov renewal process with law $P_{\beta}$. We begin with the following observation:

\begin{pr}
If $\tau^{(1)}$ and $\tau^{(2)}$ are two independent copies of a Markov renewal process with law $P_{\beta}$, then $\tau^{(1)}\cap\tau^{(2)}$ is a (delayed) Markov renewal process.
\end{pr}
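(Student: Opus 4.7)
The plan is to exhibit an explicit modulating chain for the intersection and then deduce the Markov renewal property from the strong Markov property of each copy combined with their independence.

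First, I enumerate $\tau^{(1)} \cap \tau^{(2)}$ as $0 = \sigma_0 < \sigma_1 < \sigma_2 < \cdots$ (with the convention that the sequence terminates if the intersection is finite; note that $0$ belongs to both $\tau^{(1)}$ and $\tau^{(2)}$ so $\sigma_0 = 0$ makes sense). For each $k$ with $\sigma_k < \infty$ and each $i \in \{1,2\}$, let $\kappa_k^{(i)}$ be the unique index satisfying $\tau^{(i)}_{\kappa_k^{(i)}} = \sigma_k$, and let $X_k^{(i)}$ denote the modulating state of $\tau^{(i)}$ at that epoch, i.e.\ the value of the modulating chain introduced in Section~\ref{statement_result} evaluated at index $\kappa_k^{(i)}$. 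Set $Y_k := (X_k^{(1)}, X_k^{(2)}) \in (E^q)^2$; this will be the modulating chain of the intersection.

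Next, observe that for each $k$, $\sigma_k$ is a stopping time for the filtration generated by the pair $(\tau^{(1)}, \tau^{(2)})$, and on $\{\sigma_k < \infty\}$ it is almost surely a renewal epoch of each copy $\tau^{(i)}$. Applying the strong Markov property of the Markov renewal process $P_\beta$ to each copy at $\sigma_k$ yields: conditional on $X_k^{(i)} = x_i$, the shifted sequence of renewals of $\tau^{(i)}$ beyond $\sigma_k$ is distributed as a Markov renewal process under $P_\beta$ issued from modulating state $x_i$, and is independent of the pre-$\sigma_k$ sigma-algebra of $\tau^{(i)}$.

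Combining this with the independence of the two copies, conditional on $Y_k = (x_1, x_2)$ the joint future of the pair past $\sigma_k$ is distributed as two independent copies of $P_\beta$ issued from $x_1$ and $x_2$ respectively, and is independent of the joint pre-$\sigma_k$ history. Since $\sigma_{k+1} - \sigma_k$ is the smallest positive integer belonging to both shifted copies, and $Y_{k+1}$ is the pair of modulating states of those shifted copies at that instant, both are measurable functions of the joint post-$\sigma_k$ future. Hence $(\sigma_{k+1} - \sigma_k, Y_{k+1})$ is conditionally independent of the past given $Y_k$, with conditional law depending only on $Y_k$ — which is precisely the Markov renewal property for $((\sigma_k)_k, (Y_k)_k)$. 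The initial distribution of $Y_0$ is governed by the first $q$ jumps of each copy, which are i.i.d.\ under $K^{\otimes q}$ rather than drawn from the stationary pattern of the transition kernel just derived, whence the \emph{delayed} qualifier.

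The only delicate step is the strong Markov property applied at the random epoch $\sigma_k$; this is standard once one checks that $\sigma_k$ is a stopping time in the joint filtration and coincides almost surely with a renewal epoch of each copy, both of which hold by construction. The explicit semi-Markov kernel can then be written directly in terms of the kernels of $\tau^{(1)}$ and $\tau^{(2)}$ via the hitting distribution of the diagonal by the pair started from $(x_1, x_2)$, but writing it out is not needed for the statement.
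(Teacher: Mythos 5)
Your proof is correct and takes the same approach as the paper's one-line sketch: identify the modulating chain of the intersection with the pair of modulating states of the two copies at each meeting epoch, and deduce the semi-Markov property from the strong Markov property of the joint two-copy process at the stopping times $\sigma_k$. The only slight informality is phrasing the strong Markov argument as applied to each copy separately at $\sigma_k$ --- since $\sigma_k$ is not a stopping time for the filtration of a single copy, the clean step is to apply the strong Markov property to the joint process and then use independence, which your next sentence does in fact articulate.
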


The proof is left to the reader. It is a matter of writing that conditionally on the event that $\tau^{(1)}$ and $\tau^{(2)}$ meet at some point $n$, then the future, in particular the next intersection point, only depends on the states of the Markov modulating chains of $\tau^{(1)}$ and $\tau^{(2)}$ at $n$.

In the above proposition, the term Markov renewal process has to be understood in the large sense: it can happen (and actually it will be the case in the range of $\alpha$'s we consider) that $(\tau^{(1)}\cap\tau^{(2)})_n = +\infty$ for some $n\geq1$. We will denote by $P^{\cap}_{\beta}$ the law of this intersection Markov renewal process, with Markov modulating chain in $(E^q) ^2$, and $(K^{\cap}_{\beta,x,y}(n))_{n\geq 1, x,y\in (E^q)^2 }$ its semi-Markov (sub)kernel. Hence we have to prove that $E_{\beta}^{\cap}\left(\exp(\beta^2 \sum_{n\geq 1} \delta_n) \right) < \infty$ if $\beta$ is small enough.

We define the following matrices of Laplace transforms (for $\lambda\geq 0$):
\begin{align*}
\varphi_{\beta,x,y}(\lambda) &:= \sum_{n\geq1} e^{-\lambda n} P_{\beta,x,y}^{\cap}(n\in\tau)\\
\phi_{\beta,x,y}(\lambda) &:= \sum_{n\geq1} e^{-\lambda n} K_{\beta,x,y}^{\cap}(n).
\end{align*} 

Notice that $\phi_{\beta}(0)$ is the matrix of the $P_{\beta,x,y}^{\cap}(\tau_1<\infty)$'s for $x,y\in (E^q)^2$.

\begin{pr}\label{theta_beta}
The matrix $\phi_{\beta}(0)$ is irreducible and nonnegative. If we denote by $\theta(\beta)$
 its Perron-Frobenius eigenvalue then
\begin{enumerate}
\item $\theta(0) = P^{\otimes 2}((\tau^{(1)}\cap\tau^{(2)})_1<\infty) = 1 - (\sum_{n\geq 0} P(n\in\tau)^2)^{-1} < 1$.
\item For all $\beta$, there exists a constant $c$ such that
\[P_{\beta}^{\cap}\left(\sum_{n\geq 1}\delta_n \geq N\right) \leq c\times \theta(\beta)^N.\]
\end{enumerate} 
\end{pr}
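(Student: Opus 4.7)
The plan is to address the three claims in order: nonnegativity and irreducibility of $\phi_\beta(0)$, the explicit value and strict sub-unitarity of $\theta(0)$, and finally the geometric tail bound.

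Nonnegativity is immediate since every entry $P^{\cap}_{\beta,x,y}(\tau_1<\infty)$ is a probability. For irreducibility, I would use that under $P_\beta$ the Markov modulating chain $(\overline{T}^*_k)$ is irreducible (its incidence matrix coincides with that of $\tilde{Q}^*_\beta$, which is irreducible under the positivity assumption on $K$). From any pair of states $x=(x^{(1)},x^{(2)})\in(E^q)^2$ one constructs, using the irreducibility of each copy and the positivity of $K(\star)$, a finite scenario in which both renewals sit on a common point with prescribed modulating state $y=(y^{(1)},y^{(2)})$, giving $\phi_\beta(0)^n(x,y)>0$ for some $n\geq 1$.

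For the identification of $\theta(0)$, I would exploit the collapse $P_0=P$: when $\beta=0$ the inter-arrival times are i.i.d.\ with law $K$ and the modulating state becomes irrelevant for the future. Consequently the row sums
\[\sum_{y\in(E^q)^2}\phi_0(0)(x,y)=P^{\otimes 2}(T^\cap_1<\infty)=:p\]
do not depend on $x$, so $\phi_0(0)\mathbf{1}=p\mathbf{1}$, and Perron--Frobenius forces $\theta(0)=p$. To evaluate $p$, observe that under $P^{\otimes 2}$ the intersection $\tau^{(1)}\cap\tau^{(2)}$ is a (possibly terminating) renewal process, so the total number of its points is geometric with parameter $1-p$; taking expectations,
\[\frac{1}{1-p}=\mathbb{E}^{\otimes 2}\Big(\sum_{n\geq 0}\delta_n^{(1)}\delta_n^{(2)}\Big)=\sum_{n\geq 0}P(n\in\tau)^2,\]
which is the claimed identity. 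Finiteness follows from the classical renewal asymptotic $P(n\in\tau)\sim c\,n^{\alpha-1}/L(n)$ valid for $\alpha\in(0,1)$, so that $P(n\in\tau)^2\sim c'\,n^{2\alpha-2}/L(n)^2$ is summable precisely when either $\alpha<1/2$ or $\alpha=1/2$ with $\sum_n 1/(nL(n)^2)<\infty$, which are the hypotheses of Theorem \ref{result}.

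For the tail bound, I would rewrite $\{\sum_{n\geq 1}\delta_n\geq N\}=\{\tau^\cap_N<\infty\}$ and decompose over the sequence of modulating states at the first $N$ intersection points:
\[P_\beta^\cap\Big(\sum_{n\geq 1}\delta_n\geq N\Big)=\sum_{x_0,\dots,x_N}\mu(x_0)\,\phi_\beta(0)(x_0,x_1)\cdots\phi_\beta(0)(x_{N-1},x_N),\]
where $\mu$ is the initial distribution. Since $\phi_\beta(0)$ is an irreducible nonnegative matrix on the finite state space $(E^q)^2$ with Perron--Frobenius eigenvalue $\theta(\beta)$, one has $\phi_\beta(0)^N(x,y)\leq c\,\theta(\beta)^N$ uniformly in $x,y$, and summing over the finitely many pairs yields the desired bound. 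The main obstacle I anticipate is the verification of irreducibility on the genuine state space of the intersection chain: two independent copies need not meet from every initial configuration, so care is required to check that the chain actually stays within a communicating class once launched from a valid intersection state. Aside from this point, the proof is essentially a clean application of Perron--Frobenius combined with a classical renewal identity.
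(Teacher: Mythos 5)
Your proof follows essentially the same route as the paper's: you establish irreducibility by constructing a path of positive probability bringing both replicas to a common intersection point in any prescribed pair of modulating states, identify $\theta(0)$ via the constant-row-sum observation at $\beta=0$ (which makes $\id$ a positive right eigenvector), and get the geometric tail by bounding the probability of $N$ consecutive finite returns through powers of $\phi_\beta(0)$. The one small enhancement is that you actually derive the identity $P^{\otimes 2}\left((\tau^{(1)}\cap\tau^{(2)})_1<\infty\right)=1-\left(\sum_{n\geq 0}P(n\in\tau)^2\right)^{-1}$ from the geometric law of the total number of intersection points, whereas the paper simply cites it from the i.i.d.-disorder literature; this makes the argument slightly more self-contained but is not a different method.
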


\begin{proof}
 First we prove the irreducibility. Let $x=(x^{(1)},x^{(2)})$ and $y=(y^{(1)},y^{(2)})$ be in $(E^q)^2$. We want to prove that there exists a sequence $x_0:=x, x_1, x_2, \ldots, x_i = y$ with $i\geq1$ such that \[\prod_{k=0}^i P_{\beta,x_k,y_k}^{\cap}(\tau_1<\infty) >0.\] It is enough to show that
 \begin{equation}\label{cdt_irr}
  \prod_{k=0}^i K_{\beta,x_k,y_k}^{\cap}(n_k) >0
 \end{equation} for some $n_k\geq1$.
 One can find without much difficulty a path of positive probability on which $\tau^{(1)}$ starts from $x^{(1)}$, $\tau^{(2)}$ starts from $x^{(2)}$ and they intersect at some point where respectively they are in states $y^{(1)}$ and $y^{(2)}$. This path provides suitable $i$ and $(x_k, n_k)_{1\leq k\leq i}$.

For the first point, we will only prove the first part of the equality, that is $\theta(0) = P^{\otimes 2}((\tau^{(1)}\cap\tau^{(2)})_1<\infty)$. The other part has been stated several times in the literature (see \cite{Fabio_replica} for instance). Remember that at $\beta=0$ the Markov renewal process is in fact the initial (classical) renewal process, and so the quantities $P_{x,y}^{\cap}(n\in\tau)$, $K_{x,y}^{\cap}(n)$ and $P_{x,y}^{\cap}(\tau_1<\infty)$ do not depend on $x$. As a consequence, the quantity \[P^{\otimes 2}((\tau^{(1)}\cap\tau^{(2)})_1<\infty) = \sum_{y\in (E^q)^2} P_{x,y}^{\cap}(\tau_1<\infty)\]
 is an eigenvalue of $\phi_{0}(0)$ with positive right eigenvector $\id$.

For the last point we have
 \begin{align*}
  P_{\beta}^{\cap}\left(\sum_{n\geq_1}\delta_n \geq N\right) &\leq P_{\beta}^{\cap}(\tau_1<\infty,\ldots, \tau_N<\infty)\\
  &\leq \sum_{x_0,\ldots,x_N\in(E^q)^2} \prod_{i=0}^{N-1} P_{\beta,x_i,x_{i+1}}^{\cap}(\tau_1<\infty)\\
  & \leq \sum_{x_0,x_N} \left( \phi_{\beta}(0)^N\right)_{x_0,x_N}\\
  & \leq c\times \theta(\beta)^N.
  \end{align*}
\end{proof}

This proposition implies that if $\beta$ is such that $e^{\beta^2}\theta(\beta)<1$ then
\[E_{\beta}^{\cap}\left( \exp(\beta^2\sum_{n\geq1}\delta_n)\right) < \infty.\]

In other words, the only thing left to prove is that for $\beta$ small enough, $e^{\beta^2}\theta(\beta)<1$. Actually we will prove that $\theta(\beta)$ is continuous at $\beta=0$. Since we do not have direct access to $P_{\beta}^{\cap}$, we first find a formula which is analogous to
\[P^{\otimes 2}((\tau^{(1)}\cap\tau^{(2)})_1<\infty) = 1 - (\sum_{n\geq0} P(n\in\tau)^2)^{-1},\]
i.e. which relates $\theta(\beta)$ to sums of Green functions of $P_{\beta}^{\cap}$.

\begin{pr}\label{varphi_finite}
The matrix $\varphi_{\beta}(0)$ has finite components and is irreducible. If we denote by $\vartheta(\beta)$ the Perron-Frobenius eigenvalue of $\varphi_{\beta}(0)$ then \[\theta(\beta) = 1 - \vartheta(\beta)^{-1}.\]
\end{pr}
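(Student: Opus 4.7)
The strategy is to identify $\varphi_\beta(0)$ as the matrix Green kernel of the intersection Markov renewal process and then transfer the Perron--Frobenius eigenvalue of $\phi_\beta(0)$ through the resulting matrix-series identity. First, by decomposing the event $\{n\in\tau\}$ according to the (unique, if finite) renewal index $k$ with $\tau_k=n$ and summing over $n$, the semi-Markov structure yields the matrix identity
\[
\varphi_\beta(0) \;=\; \sum_{k\geq 0}\phi_\beta(0)^{k} \;=\; (\Id-\phi_\beta(0))^{-1},
\]
which is the semi-Markov analog of the classical renewal identity $G=\sum_k K^{*k}$, the $k=0$ term contributing the identity on $(E^q)^2$.

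The finiteness of the entries of $\varphi_\beta(0)$ then follows from Proposition \ref{theta_beta}(2): summing the geometric tail $P_\beta^\cap(\sum_n\delta_n\geq N)\leq c\,\theta(\beta)^N$ in $N$ gives $E_\beta^\cap[\sum_n\delta_n]\leq c/(1-\theta(\beta))<\infty$ whenever $\theta(\beta)<1$, and each entry $\varphi_\beta(0)(x,y)$ is dominated by this expectation started from an appropriate initial state. Equivalently, $\theta(\beta)<1$ is the spectral radius of $\phi_\beta(0)$, so the matrix geometric series converges in norm. Irreducibility of $\varphi_\beta(0)$ is inherited from that of $\phi_\beta(0)$ established in Proposition \ref{theta_beta}: for every pair $(x,y)\in (E^q)^2\times(E^q)^2$ some power $\phi_\beta(0)^{k}(x,y)$ is strictly positive, which forces $\varphi_\beta(0)(x,y)>0$.

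For the eigenvalue identity, let $v$ be a positive Perron right eigenvector of $\phi_\beta(0)$ with eigenvalue $\theta(\beta)$. Geometric summation gives $(\Id-\phi_\beta(0))^{-1}v=v/(1-\theta(\beta))$, so $v$ is a positive eigenvector of $\varphi_\beta(0)$ with eigenvalue $1/(1-\theta(\beta))$; by Perron--Frobenius applied to the nonnegative irreducible matrix $\varphi_\beta(0)$, this is its PF eigenvalue $\vartheta(\beta)$, and rearranging $\vartheta(\beta)=1/(1-\theta(\beta))$ yields the claimed $\theta(\beta)=1-1/\vartheta(\beta)$. The main subtlety to watch for is the correct accounting of the $k=0$ (equivalently $n=0$) term, which is precisely what makes the matrix identity take the clean form $(\Id-\phi_\beta(0))^{-1}$ and produces the displayed relation $\theta=1-1/\vartheta$ rather than $\theta=1-1/(1+\vartheta)$; otherwise the proof is a routine combination of matrix geometric series convergence (via the tail bound of Proposition \ref{theta_beta}) and Perron--Frobenius theory.
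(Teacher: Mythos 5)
Your argument for the finiteness of the entries of $\varphi_\beta(0)$ — which is the substantive content of this proposition — is circular. You bound $E_\beta^\cap[\sum_n\delta_n]\leq c/(1-\theta(\beta))$ "whenever $\theta(\beta)<1$", but at this point $\theta(\beta)<1$ is not available: Proposition~\ref{theta_beta} establishes only $\theta(0)<1$, and the inequality for $\beta>0$ is exactly what the remainder of Section~\ref{Intersection_markovrenewal} is built to extract (via continuity of $\vartheta(\beta)$ at $\beta=0$, which in turn rests on the present proposition). Since $\varphi_\beta(0)=\sum_{k\geq0}\phi_\beta(0)^k$, finiteness of its entries is logically equivalent to the spectral radius condition $\theta(\beta)<1$ for the nonnegative irreducible matrix $\phi_\beta(0)$; one cannot be invoked to prove the other. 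A telltale sign that something is missing is that your proof never uses the hypothesis $\alpha\in(0,1/2)$ (or $\alpha=1/2$ with $\sum 1/(nL(n)^2)<\infty$), yet the proposition is false without it.

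The paper proves finiteness directly. It factorizes $P^\cap_{\beta,x,y}(n\in\tau)=P_{\beta,x_1,y_1}(n\in\tau)P_{\beta,x_2,y_2}(n\in\tau)$, so that $\varphi_{\beta,x,y}(0)=\sum_n P_{\beta,x_1,y_1}(n\in\tau)P_{\beta,x_2,y_2}(n\in\tau)$, and then estimates the decay of the single-replica Green function $P_{\beta,x,y}(n\in\tau)$. This requires showing (Lemma~\ref{convolution}, equation~(\ref{tail})) that the inter-arrival law of the induced delayed renewal process $\tau_y$ is regularly varying with the same index $1+\alpha$ as $K$ — a genuine step, as it involves dominating a convolution over the random hitting time $\theta_0(y)$ — and then invoking Doney's infinite-mean renewal theorem, which gives $P_{\beta,x,y}(n\in\tau)\asymp n^{\alpha-1}/L(n)$. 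The square-summability of this sequence is precisely the $\alpha<1/2$ (or critical $\alpha=1/2$) condition. With finiteness secured, the rest of your argument — the geometric-series identity with the $k=0$ term contributing $\Id$, irreducibility inherited from $\phi_\beta(0)$, and the Perron eigenvector bootstrap giving $\vartheta(\beta)=(1-\theta(\beta))^{-1}$ — is correct, and is in fact a slightly more transparent route to the eigenvalue identity than the paper's passage through Laplace transforms of the Markov renewal equation and the limit $\lambda\to0$.
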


Before proving Proposition \ref{varphi_finite}, we need a lemma, and for this lemma we need additional notations. We define $E^q_{\star} = \{x\in E^q : x_q = \star\}$ and for all $x,y\in E^q$,
\[\hat{K}_{x,y}(n) = \frac{K_{\beta,x,y}(n)}{\tilde{Q}_{\beta}^*(x,y)}\]
which is the probability under $P_{\beta}$ of making a jump of size $n$ knowing departure state $x$ and arrival  state $y$. The letter $\beta$ is omitted because the $\hat{K}_{x,y}(n)$'s do not depend on it: actually, if $y\in E^q_{\star}$, then $\hat{K}_{x,y}(n) = \frac{K(n)}{K(\star)}\mathbf{1}_{n>q}$; otherwise, $\hat{K}_{x,y}(n) = \mathbf{1}_{n=y_q}$.

\begin{lem}\label{convolution}
For all $x_0, x_1, \ldots, x_k \in E^q$,
\begin{equation}\label{c1}
(\hat{K}_{x_0,x_1}*\hat{K}_{x_1,x_2}*\ldots *\hat{K}_{x_{k-1},x_k})(n) \stackrel{n\rightarrow\infty}{\sim} \frac{L(n)}{K(\star) n^{1+\alpha}} \times |\{1\leq l \leq k, x_l \in E^q_{\star}\}|
\end{equation}
and there exists $c>0$ such that for all $k,x_0, x_1, \ldots, x_k \in E^q$ and $n>kq$
\begin{equation}\label{c2}
(\hat{K}_{x_0,x_1}*\hat{K}_{x_1,x_2}*\ldots *\hat{K}_{x_{k-1},x_k})(n) \leq k^c \frac{L(n)}{K(\star) n^{1+\alpha}}
\end{equation}
\end{lem}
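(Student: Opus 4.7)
The plan is to reduce the convolution to one of i.i.d.\ heavy-tailed increments plus a deterministic shift, then invoke classical convolution asymptotics for regularly varying sequences. Observe first that $\hat{K}_{x_{l-1},x_l}$ takes one of two forms, according to whether $x_l\in E^q_{\star}$. If $x_l\notin E^q_{\star}$, then $(x_l)_q\in\{1,\ldots,q\}$ and $\hat{K}_{x_{l-1},x_l}$ is the Dirac mass $\delta_{(x_l)_q}$. If $x_l\in E^q_{\star}$, then $\hat{K}_{x_{l-1},x_l}=K'$, where $K'(n):=K(n)\mathbf{1}_{\{n>q\}}/K(\star)$ is a probability mass function with regularly varying tail of index $-(1+\alpha)$ satisfying $K'(n)\sim L(n)/(K(\star)\,n^{1+\alpha})$. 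Setting $j:=|\{1\leq l\leq k,\, x_l\in E^q_{\star}\}|$ and $D:=\sum_{l\,:\,x_l\notin E^q_{\star}}(x_l)_q \leq (k-j)q$, the convolution factorises as
\[
(\hat{K}_{x_0,x_1}*\hat{K}_{x_1,x_2}*\cdots*\hat{K}_{x_{k-1},x_k})(n)=(K')^{*j}(n-D).
\]

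For the asymptotic (\ref{c1}), apply the standard convolution asymptotic for regularly varying sequences: $(K')^{*j}(m)\sim j\,K'(m)\sim jL(m)/(K(\star)\,m^{1+\alpha})$ as $m\to\infty$ for each fixed $j\geq 1$. Specialising to $m=n-D$ and using $L(n-D)\sim L(n)$ (valid at fixed $D$ since $L$ is slowly varying), we obtain the claim. When $j=0$ the convolution is a Dirac mass, so both sides of (\ref{c1}) vanish for $n$ large and the identity holds trivially.

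For the uniform bound (\ref{c2}), the factor $k^c$ allows a polynomial loss in $k$. Potter's bound for the slowly varying function $L$ yields $L(n-D)/L(n)\leq C(n/(n-D))^{\alpha/2}$, which is bounded by a power of $k$ when $n>kq\geq D$. The remaining ingredient is a uniform estimate of the form $(K')^{*j}(m)\leq C'j\,K'(m)$ for $m$ sufficiently large, which can be obtained by iterating a one-big-jump decomposition: decompose $\{X_1+\cdots+X_j=m\}$ (for i.i.d.\ $X_i$ with law $K'$) according to the position of the maximum summand, bound the remaining density uniformly, and sum over positions of the maximum. Combining the two estimates produces (\ref{c2}).

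The principal obstacle is the uniform bound (\ref{c2}). The asymptotic (\ref{c1}) is a direct application of a standard subexponential result, whereas the uniform bound requires pairing a uniform convolution estimate with Potter-type control of $L$ across the range $[n-kq,n]$ and, crucially, ensuring that the constants absorbed into $k^c$ are polynomial (rather than exponential) in $k$. Naive iterative arguments on $j$ tend to produce exponentially growing constants, so some care is needed in the bookkeeping.
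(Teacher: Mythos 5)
Your reduction of the convolution to $(K')^{*j}(n-D)$, with $K'(n)=K(n)\mathbf{1}_{\{n>q\}}/K(\star)$ and $D\leq(k-j)q$ a deterministic shift, is the same decomposition the paper relies on, and your derivation of (\ref{c1}) from the standard asymptotic $q^{*j}(n)\sim j\,q(n)$ for regularly varying probability kernels matches the paper's appeal to \cite[Lemma A.5]{GG_Book}. You handle the shift $D$ more explicitly than the paper's sketch, via Potter's bound for $L(n-D)/L(n)$, which is a useful clarification.

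For (\ref{c2}) the gap is exactly where you flag it. You assert that $(K')^{*j}(m)\leq C'j\,K'(m)$ holds ``for $m$ sufficiently large,'' but the asymptotic $(K')^{*j}(m)\sim j\,K'(m)$ only gives this for each \emph{fixed} $j$, with a threshold depending on $j$; the lemma requires a bound uniform in $j$ down to $m>jq$. You then correctly observe that the naive one-step induction $(K')^{*(j+1)}=(K')^{*j}*K'$ inflates the constant by a fixed factor at each step, producing $(1+C)^j$ rather than $j^c$, and leave the repair to ``careful bookkeeping.'' That bookkeeping \emph{is} the technical content of (\ref{c2}). One route to a polynomial constant is dyadic: write $(K')^{*2j}(m)=\sum_{m'}(K')^{*j}(m')(K')^{*j}(m-m')$, split at $m'=m/2$, and apply the inductive bound to whichever of $m'$ and $m-m'$ exceeds $m/2$; this yields $C_{2j}\leq 2C\,C_j$ and hence $C_k\leq C_1\,k^{\log_2(2C)}$, while the range where $m$ is of order $jq$ is handled separately, since there the left-hand side is exponentially small in $j$ whereas $K'(m)$ is only polynomially small. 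The paper itself is equally terse here --- ``adapt the induction in the proof of Lemma A.5'' --- but as written, your argument identifies the obstacle without resolving it.
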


\begin{proof}
Assertion (\ref{c1}) comes from the fact that if $q(n)=\tilde{L}(n)/n^{1+\alpha}$ is a probability kernel with $\tilde{L}$ a slowly varying function, then $q^{*k}(n) \sim kq(n)$ (see \cite[Lemma A.5]{GG_Book}) and only the $\hat{K}_{x,y}$'s for which $y\in E^q_{\star}$ contribute to the tail behaviour. For $k=1$, (\ref{c2}) is clearly true. One can adapt the induction in the proof of \cite[Lemma A.5]{GG_Book} to conclude.
\end{proof}

\begin{proof}[Proof of Proposition \ref{varphi_finite}]
First we prove finiteness of the components. Let $x = (x_1, x_2), y = (y_1, y_2)$ be in $(E^q)^2$. We have 
\begin{equation}\label{series}
\sum_{n\geq 0} P_{\beta,x,y}^{\cap}(n\in\tau) = \sum_{n\geq 0} P_{\beta,x_1,y_1}(n\in\tau) P_{\beta,x_2,y_2}(n\in\tau)
\end{equation}
so we have to look at the tail behaviour of the $P_{\beta,x_1,y_1}(n\in\tau)$'s. But for all $x,y\in E^q$ we can write
\begin{equation*}
 P_{\beta,x,y}(n\in\tau) = P_{\beta,x}(n\in\tau_{y})
\end{equation*}
where $\tau_{y,n}=\tau_{\theta_n(y)}$, and 
\begin{align*}
 \theta_0(y) &= \inf\{k\geq 0 , \overline{T}^*_{k-q+1} = y\}\\
\theta_{n+1}(y) &= \inf\{k > \theta_n(y), \overline{T}^*_{k-q+1} = y\}
\end{align*}
By the markov renewal property, under $P_{\beta}$, for all $y\in E^q$, $\tau_{y}$ is a (delayed, because we can start at $x\neq y$) classical renewal process. We are then left with proving that the interarrival distribution of $\tau_y$ has (approximately) the same tail behaviour as the original renewal process (which satisfies the assumptions of Theorem \ref{result}). We then conclude with the result of \cite{Doney} on renewal theorems with infinite mean to show that the series in (\ref{series}) converges. We fix the state $y\in E^q$, and write $\theta = \theta_0(y)$, which is finite almost surely ($(\overline{T}^*_n)_{n\geq1}$ is a recurrent Markov chain). We write $J_n = \overline{T}^*_{n-q}$ the markov modulating chain. Then
\begin{align*}
 P_{\beta,y}(T_1 + \ldots + T_{\theta} = n) = \sum_{k\geq1} E_{\beta,y}\left( \mathbf{1}_{\{\theta=k\}}P_z(T_0 + \ldots + T_{\theta} = n| J_0 \ldots J_k)) \right)
\end{align*}
From our previous remark on the laws $\hat{K}_{y,z}$ and Lemma \ref{convolution} we have
\begin{align*}
 P_{\beta,y}(T_0 + \ldots + T_{\theta} = n| J_0 \ldots J_k) &= (K_{J_0,J_1}*K_{J_1,J_2}*\ldots *K_{J_{k-1},J_k})(n)\\
& \stackrel{n\rightarrow\infty}{\sim} \frac{L(n)}{n^{1+\alpha}}\times \frac{N_k^{\star}}{K(\star)}
\end{align*}
where
\begin{equation*}
 N_k^{\star} := \sum_{i=1}^k \mathbf{1}_{\{J_i \in E^q_{\star}\}}.
\end{equation*}
From Markov chain theory (see \cite[Chap I.3]{Asmussen} for example),
\begin{equation*}
 E_{\beta,y} \sum_{k\geq1} \mathbf{1}_{\{\theta=k\}} N_k^{\star} = E_{\beta,y} N_{\theta}^{\star} = \frac{l_{\beta}^*(E^q_{\star})}{l_{\beta}^*(y)}
\end{equation*}
where $l_{\beta}^*$ is the invariant probability measure of $\tilde{Q}^*_{\beta}$. Finally we have
\begin{equation}\label{tail}
 P_{\beta,y}(T_0 + \ldots + T_{\theta} = n) \sim \frac{l_{\beta}^*(E^q_{\star})}{l_{\beta}^*(y)K(\star)}\times \frac{L(n)}{n^{1+\alpha}},
\end{equation}
but one has to justify the interchange of the integration and the asymptotic equivalent. Indeed, from the upper bound (\ref{c2}) in Lemma \ref{convolution}, one can apply the Dominated Convergence Theorem, because $\mathbb{E}(\theta^c)<\infty$ (it is not hard to see that the tail of $\theta$ decays exponentially fast).

We prove the last point of the proposition. The following Markov renewal equation hold: for all $x,y\in(E^q)^2$, 
 \begin{align}\label{markov_renewal_equation}
  P_{\beta,x,y}^{\cap}(n\in\tau) &= \delta_{x,y}\mathbf{1}_{n=0} + \sum_{z\in(E^q)^2} \sum_{k=1}^n P_{\beta,x,z}^{\cap}(n-k\in\tau) K_{\beta,z,y}^{\cap}(k)\\
  & =  \delta_{x,y}\mathbf{1}_{n=0} + \sum_{z\in(E^q)^2} \sum_{k=1}^n K_{\beta,x,z}^{\cap}(k) P_{\beta,z,y}^{\cap}(n-k\in\tau) 
 \end{align}
 Taking the Laplace transforms we get for $\lambda>0$
 \begin{equation*}
  \varphi_{\beta}(\lambda) = \Id + \varphi_{\beta}(\lambda)\phi_{\beta}(\lambda) = \Id + \phi_{\beta}(\lambda)\varphi_{\beta}(\lambda).
 \end{equation*}
Thanks to the first part of the proposition, that has been just proved, we can take the limit as $\lambda$ goes to $0$, which yields
 \begin{equation*}
  \varphi_{\beta}(0)(\Id-\phi_{\beta}(0))=(\Id-\phi_{\beta}(0))\varphi_{\beta}(0)=\Id
 \end{equation*}
 from which we can conclude.
\end{proof}

As a consequence, we will prove that $\vartheta(\beta)$ is continuous at $\beta=0$, which is the same as proving that for all $x,y$ in $(E^q)^2$, the series $\sum_{n\geq0} P_{\beta,x,y}^{\cap}(n\in\tau)$ are continuous at $\beta=0$.

It is not difficult to see that for all $n\geq0$, $x,y\in(E^q)^2$, the Green function $P^{\cap}_{\beta,x,y}(n\in\tau)$ is continuous in $\beta$ but the continuity of the series $$\sum_{n\geq0} P^{\cap}_{\beta,x,y}(n\in\tau)$$ is not immediate. We will see that the last quantity can be written as the $L^2$ norm of a certain function. The continuity will thus be proved on this $L^2$ norm via the Dominated Convergence Theorem.

We define the following Fourier series:
\[\hat{\phi}_{\beta,x,y}(\theta) = \sum_{n\geq1} e^{i\theta n} K_{\beta,x,y}(n)\]
\[\hat{\varphi}_{\beta,x,y}(\theta) = \sum_{n\geq0} e^{i\theta n} P_{\beta,x,y}(n\in\tau)\]
\[\hat{\varphi}^{\texttt{sym}}_{\beta,x,y}(\theta) = \sum_{n\in\mathbb{Z}} e^{i\theta n} P_{\beta,x,y}(|n|\in\tau)\]
The matrix $\hat{\phi}_0(\theta)$ will be written $\hat{\phi}(\theta)$. The functions $\hat{\phi}_{\beta,x,y}$ are continuous whereas $\hat{\varphi}_{\beta,x,y}(\theta)$ and $\hat{\varphi}^{\texttt{sym}}_{\beta,x,y}(\theta)$ are in $L^2(-\pi,\pi)$ (the space of functions which are square integrable with respect to the Lebesgue measure on $(-\pi,\pi)$), because of our knowledge on the decay of the $P_{\beta,x,y}(n\in\tau)$'s (cf previous section).

\begin{pr}\label{Fourier}
The matrix $\Id - \hat{\phi}_{\beta}(\theta)$ is $\theta$-almost everywhere invertible, and
\[P_{\beta,x,y}(n\in\tau) = \frac{1}{2\pi}\int_{-\pi}^{\pi} e^{-in\theta} \left(2\re((\Id - \hat{\phi}_{\beta}(\theta))^{-1}_{x,y}) -1\right) d\theta.\]
Furthermore there exists a positive constant $C$ such that for $\beta$ small enough, for all $x,y\in E^q$
\begin{equation}\label{domination}
|[(\Id - \hat{\phi}_{\beta}(\theta))^{-1}]_{x,y}| \leq C \sup_{s,t\in E^q} |[(\Id - \hat{\phi}(\theta))^{-1}]_{s,t}|
\end{equation}
$\theta$- almost surely.
\end{pr}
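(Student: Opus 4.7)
The plan is to pass the single-chain Markov renewal equation into Fourier space, invert it almost everywhere, and then compare the $\beta>0$ and $\beta=0$ inverses by exploiting the stochasticity of $\tilde{Q}^*_\beta$. The proof naturally splits into three parts: the Fourier identity, the a.e.\ invertibility together with the inversion formula, and the uniform domination (\ref{domination}).

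For the Fourier identity, start from the single-chain Markov renewal equation
\[P_{\beta,x,y}(n\in\tau)=\delta_{x,y}\mathbf{1}_{n=0}+\sum_{z\in E^q}\sum_{k=1}^n P_{\beta,x,z}(n-k\in\tau)K_{\beta,z,y}(k).\]
The tail estimate (\ref{tail}) and Doney's theorem (already invoked in the proof of Proposition \ref{varphi_finite}) give $P_{\beta,x,y}(n\in\tau)$ of order $L(n)/n^{1-\alpha}$, which is square summable under the assumption of Theorem \ref{result}; hence $\hat\varphi_{\beta,x,y}\in L^2(-\pi,\pi)$. Multiplying by $e^{in\theta}$ and summing over $n\geq 0$ yields the matrix identity $\hat\varphi_\beta(\theta)=\Id+\hat\varphi_\beta(\theta)\hat\phi_\beta(\theta)$ a.e. For the invertibility, observe from the semi-Markov kernel the factorization $\hat\phi_\beta(\theta)=\tilde{Q}^*_\beta D(\theta)$, where $D(\theta)$ is the diagonal matrix with entry $e^{iy_q\theta}$ if $y_q\in\{1,\ldots,q\}$ and $\sum_{n>q}e^{in\theta}K(n)/K(\star)$ if $y_q=\star$. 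Since $\tilde{Q}^*_\beta$ is stochastic with Perron eigenvalue $1$ and $D(0)=\Id$, and since for $\theta\ne 0$ at least one diagonal entry of $D(\theta)$ has modulus strictly less than $1$ by aperiodicity of $K$, a Perron--Frobenius comparison with the matrix of moduli $\tilde{Q}^*_\beta$ forces the spectral radius of $\hat\phi_\beta(\theta)$ to be strictly less than $1$ for a.e.\ $\theta$. Thus $\Id-\hat\phi_\beta(\theta)$ is invertible a.e., $\hat\varphi_\beta(\theta)=(\Id-\hat\phi_\beta(\theta))^{-1}$, and the stated formula follows from the elementary identity $\hat\varphi^{\texttt{sym}}_{\beta,x,y}(\theta)=2\re\hat\varphi_{\beta,x,y}(\theta)-\delta_{x,y}$ together with Fourier inversion on $L^2(-\pi,\pi)$.

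For the domination, set $A(\theta)=\Id-Q^*D(\theta)$ and $A_\beta(\theta)=\Id-\tilde{Q}^*_\beta D(\theta)$, so that the resolvent identity gives
\[A_\beta(\theta)^{-1}=\bigl(\Id-A(\theta)^{-1}(\tilde{Q}^*_\beta-Q^*)D(\theta)\bigr)^{-1}A(\theta)^{-1}.\]
From the proof of Proposition \ref{asymptotic} one has $Q^*_\beta=Q^*+O(\beta^2)$ and $\lambda(\beta)=1+O(\beta^2)$; a standard eigenvector perturbation (the derivative of $\nu^*_\beta$ at $0$ lies in $\ker(Q^*-\Id)=\mathbb{R}\mathbf{1}$ and can be normalized to zero) yields $\nu^*_\beta=\mathbf{1}+O(\beta^2)$, whence $\|\tilde{Q}^*_\beta-Q^*\|=O(\beta^2)$. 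The crucial algebraic fact is that both matrices are stochastic, so $(\tilde{Q}^*_\beta-Q^*)\mathbf{1}=0$; consequently the image of $(\tilde{Q}^*_\beta-Q^*)D(\theta)$ is orthogonal to the Perron direction $\mathbf{1}$ of $Q^*D(0)$ up to an $O(\theta)$ error coming from $D(\theta)-\Id$. Spectrally decomposing $A(\theta)^{-1}$ near $\theta=0$ as a rank-one singular part $(1-\mu(\theta))^{-1}\mathbf{1}\otimes K^{\otimes q}$ plus a bounded remainder, this alignment kills the singular contribution when one multiplies by $(\tilde{Q}^*_\beta-Q^*)D(\theta)$ on the right, and one obtains $\|A(\theta)^{-1}(\tilde{Q}^*_\beta-Q^*)D(\theta)\|\le\tfrac12$ uniformly in $\theta$ for $\beta$ small enough. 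Inequality (\ref{domination}) then follows.

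The main obstacle is precisely this last step. Both $A(\theta)^{-1}$ and $A_\beta(\theta)^{-1}$ diverge at $\theta=0$ (they are only marginally $L^2$), so a naive operator-norm perturbation argument fails, and one must verify that the two singularities align rather than compete. The stochasticity constraint $(\tilde{Q}^*_\beta-Q^*)\mathbf{1}=0$, which is entirely specific to the pinning setting, is what makes the matching of Perron directions at $\theta=0$ work and converts the formal divergence into a bounded correction of order $\beta^2$.
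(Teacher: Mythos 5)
The Fourier identity and the a.e.\ invertibility are handled correctly, and the route via the factorization $\hat\phi_\beta(\theta)=\tilde Q^*_\beta D(\theta)$ followed by a spectral-radius comparison with the nonnegative matrix $\tilde Q^*_\beta|D(\theta)|$ is cleaner and more self-contained than the paper's citation of the Garsia--Lamperti/Spitzer computation. However, the domination step contains a genuine gap.

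You write the resolvent identity as $A_\beta(\theta)^{-1}=(\Id-A(\theta)^{-1}M(\theta))^{-1}A(\theta)^{-1}$ with $M(\theta)=(\tilde Q^*_\beta-Q^*)D(\theta)$, and then claim $\|A(\theta)^{-1}M(\theta)\|\leq\tfrac12$ uniformly in $\theta$, by invoking $(\tilde Q^*_\beta-Q^*)\mathbf 1=0$. But this identity is a right-kernel condition, and with $A(\theta)^{-1}$ standing on the \emph{left} of $M(\theta)$ it does not reach the singular channel. Writing the singular part of $A(\theta)^{-1}$ near $\theta=0$ as $\frac{1}{1-\mu(\theta)}v(\theta)\pi(\theta)^T$ with $v(0)=\mathbf 1$ and $\pi(0)=K^{\otimes q}$, one has
\[
A(\theta)^{-1}M(\theta)\;\approx\;\frac{1}{1-\mu(\theta)}\,v(\theta)\bigl(\pi(\theta)^T(\tilde Q^*_\beta-Q^*)D(\theta)\bigr)+\text{bounded},
\]
and $\pi(0)^T(\tilde Q^*_\beta-Q^*)=K^{\otimes q}\tilde Q^*_\beta-K^{\otimes q}$ is a \emph{nonzero} row vector of order $\beta^2$ for $\beta>0$, because $K^{\otimes q}$ is the invariant measure of $Q^*$, not of $\tilde Q^*_\beta$. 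Hence $A(\theta)^{-1}M(\theta)$ blows up like $\beta^2/(1-\mu(\theta))$ as $\theta\to0$, and the claimed uniform bound is false. The logical breakdown is at the sentence ``consequently the image of $(\tilde Q^*_\beta-Q^*)D(\theta)$ is orthogonal to $\mathbf 1$'': for a nonsymmetric matrix, $\mathbf 1\in\ker(\tilde Q^*_\beta-Q^*)$ does not make the column space orthogonal to $\mathbf 1$, and in any case the relevant direction here is the \emph{left} eigenvector $K^{\otimes q}$, not $\mathbf 1$.

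The fix is to use the other factorization $A_\beta(\theta)^{-1}=A(\theta)^{-1}(\Id-M(\theta)A(\theta)^{-1})^{-1}$, so that the perturbation $M(\theta)A(\theta)^{-1}$ has the singular factor on the right. Then
\[
M(\theta)A(\theta)^{-1}\;\approx\;\frac{1}{1-\mu(\theta)}\bigl(M(\theta)v(\theta)\bigr)\pi(\theta)^T+\text{bounded},
\qquad
M(\theta)v(\theta)=(\tilde Q^*_\beta-Q^*)\bigl(D(\theta)v(\theta)-\mathbf 1\bigr)
\]
by the stochasticity identity, and both $D(\theta)v(\theta)-\mathbf 1$ and $1-\mu(\theta)$ are of the same size $L_\star(1/\theta)\theta^\alpha$ as $\theta\to0$, so the ratio is bounded and $M(\theta)A(\theta)^{-1}$ is uniformly $O(\beta^2)$. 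This is in effect what the paper does: it multiplies $R_\beta(\theta)$ on the \emph{right} by ${}^t\com(\Id-\hat\phi(\theta))$, uses that at $\theta=0$ this cofactor matrix is proportional to $\mathbf 1(K^{\otimes q})^T$, and kills the leading singularity via $R_\beta(0)\mathbf 1=0$.
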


\begin{proof}
Let \[\hat{\phi}_{\beta}^{(x)}(\theta) := \sum_{n\geq1} e^{i n\theta} P_{\beta}(\tau_{x,k+1}-\tau_{x,k}=n)\] be the characteristic function of the interarrival times of $\tau_x$ under $P_{\beta}$ (cf previous section) and
\[\hat{\phi}_{\beta}^{(x,y)}(\theta) := \sum_{n\geq1} e^{i n\theta} P_{\beta,x}(\tau_{\theta_0(y)}=n),\] which are continuous functions (the term in the sum is bounded in absolute value by a probability). From our aperiodicity conditions, $\hat{\phi}_{\beta}^{(x)}(\theta)=1$ if and only if $\theta=0$. From \cite[Lemma 3.1.1]{GarsiaLamperti} (the proof can be found in \cite[Chap. II.9]{Spitzer_PRW}) we have $\theta$-almost everywhere
\[\hat{\varphi}^{\texttt{sym}}_{\beta,y,y}(\theta) = \frac{1}{1 - \hat{\phi}_{\beta}^{(y)}(\theta)}\]
and more generally, for all $x,y\in E^q$, one can show by decomposing the probability (under $P_{\beta}$) starting from $x$ that $n$ is in $\tau_y$ according to the value of $\tau$ the first time state $y$ is reached, and by taking the Fourier transform, that
\[\hat{\varphi}^{\texttt{sym}}_{\beta,x,y}(\theta) = 1 + \frac{\hat{\phi}_{\beta}^{(x,y)}(\theta)}{1 - \hat{\phi}_{\beta}^{(y)}(\theta)}\]
From the Markov renewal equations (\ref{markov_renewal_equation}), written for $P_{\beta}$ instead of $P_{\beta}^{\cap}$, one can deduce that almost everywhere,
\[\hat{\varphi}_{\beta}(\theta) = \Id + \hat{\varphi}_{\beta}(\theta)\hat{\phi}_{\beta}(\theta) = \Id + \hat{\phi}_{\beta}(\theta)\hat{\varphi}_{\beta}(\theta),\]
which proves the first part of the proposition. Then we can write
\begin{align*}
P_{\beta,x,y}(n\in\tau) & = \frac{1}{2\pi}\int_{-\pi}^{\pi} e^{-in\theta} \hat{\varphi}^{\texttt{sym}}_{\beta,x,y}(\theta)d\theta \\  & = \frac{1}{2\pi}\int_{-\pi}^{\pi} e^{-in\theta} \left( 2 \re(\hat{\varphi}_{\beta,x,y}(\theta)) -1 \right) d\theta\\ & = \frac{1}{2\pi}\int_{-\pi}^{\pi} e^{-in\theta} \left( 2 \re([\Id - \hat{\phi}_{\beta}(\theta)]^{-1}_{x,y}) -1 \right) d\theta,
\end{align*}
which ends the proof of the first part.

Now we prove the second part of the proposition (equation (\ref{domination})). We recall that
\[\hat{\phi}_{\beta,x,y}(\theta) = \hat{\phi}_{x,y}(\theta)\frac{e^{\beta^2 G(y)}\nu^*_{\beta}(y)}{\lambda(\beta)\nu^*_{\beta}(x)}\]
and $\hat{\phi}_{\beta}:=\hat{\phi}_{\beta}(\theta=0)$ is simply the matrix $Q^*$, so \begin{equation}\label{vp}\lambda(\beta) = \sum_{y\in E^q} e^{\beta^2 G(y)} \frac{\nu^*_{\beta}(y)}{\nu^*_{\beta}(x)}\hat{\phi}_{x,y}.\end{equation}

We define the matrix $R_{\beta}(\theta) = \lambda(\beta)(\Id - \hat{\phi}_{\beta}(\theta)) - (\Id - \hat{\phi}(\theta))$. It is enough to prove that the coefficients of $R_{\beta}(\theta)(\Id - \hat{\phi}(\theta))^{-1}$ decrease to $0$ as $\beta$ tends to $0$, uniformly in $\theta$. This is not immediate because there is a singularity at $\theta = 0$. Recall that 
\[ (\Id - \hat{\phi}_{\beta}(\theta))^{-1} = \frac{^t\com(\Id - \hat{\phi}(\theta))}{\det(\Id - \hat{\phi}(\theta))}.\]
We know that as $\theta$ goes to $0$, $\det(\Id - \hat{\phi}(\theta))$ is of the lowest order among the $(\hat{\phi}_{x,y}-\hat{\phi}_{x,y}(\theta))$'s, for $x,y\in E^q$, so we have to look at the elements of $R_{\beta}(\theta)^t\com(\Id - \hat{\phi}(\theta))$. More precisely we have to check that
\begin{itemize}
\item as $\theta$ goes to $0$ there are no terms of order $0$,
\item all terms of higher order have coefficients which go to $0$ as $\beta$ goes to $0$.
\end{itemize}
On one hand we have
\[(\Id - \hat{\phi}(\theta))_{x,x} = (\hat{\phi}_{x,x}- \hat{\phi}_{x,x}(\theta)) + \sum_{y\neq x} \hat{\phi}_{x,y}\]
and for $x\neq y$,
\[(\Id - \hat{\phi}(\theta))_{x,y} = (\hat{\phi}_{x,y} - \hat{\phi}_{x,y}(\theta)) + \hat{\phi}_{x,y}\]
On the other hand, we easily compute, using (\ref{vp}),
\[R_{\beta}(\theta)_{x,x} = \epsilon_{\beta}(x,x)(\hat{\phi}_{x,x}- \hat{\phi}_{x,x}(\theta)) + \sum_{y\neq x} \epsilon_{\beta}(x,y) \hat{\phi}_{x,y}\]
\[R_{\beta}(\theta)_{x,y} = \epsilon_{\beta}(x,y)(\hat{\phi}_{x,y} - \hat{\phi}_{x,y}(\theta)) - \epsilon_{\beta}(x,y)\hat{\phi}_{x,y}\]
where $\epsilon_{\beta}(x,y) := \left( e^{\beta^2 G(y)}\frac{\nu^{\star}_{\beta}(y)}{\nu^{\star}_{\beta}(x)} - 1\right)$ (which tends to $0$ as $\beta$ goes to $0$).
From these expressions, one can verify without much difficulty that the second point is satisfied. For the terms of order $0$ in $\theta$ of $R_{\beta}(\theta)^t\com(\Id - \hat{\phi}(\theta))$, it is equal to $0$; it is shown by computation, using the fact that $^t\com(\Id - \hat{\phi})$ is constant on its columns (see Section \ref{asymptotic}).
\end{proof}

We can now conclude this first part with

\begin{proof}[Proof of Proposition \ref{intersection}]. 
We begin with $l=0$, i.e. we show that $$E^{\cap}_{\beta}(\exp(\beta^2 \sum_{n\geq1}\delta_n))<\infty$$ for $\beta$ small enough. From Proposition \ref{theta_beta}, it is enough to show that $e^{\beta^2}\theta(\beta)<1$ for $\beta$ small and as $\theta(0)<1$, we will show that $\theta(\beta)\rightarrow \theta(0)$ as $\beta\rightarrow 0$. From Proposition \ref{varphi_finite}, this reduces to the continuity of $\vartheta(\beta)$ at $\beta=0$ and so, to the continuity of the series \[\sum_{n\geq0} P^{\cap}_{\beta,x,y}(n\in\tau)\] at $\beta=0$, for all $x,y\in(E^q)^2$. From (\ref{series}) one can write
\begin{equation*}
\sum_{n\geq0} P^{\cap}_{\beta,x,y}(n\in\tau) = \frac{1}{2}\left(1 + \sum_{n\in\mathbb{Z}}P_{\beta,x_1,y_1}(|n|\in\tau)P_{\beta,x_2,y_2}(|n|\in\tau) \right)
\end{equation*}
where $x=(x_1,x_2)$ and $y=(y_1,y_2)$. From Proposition \ref{Fourier} and Parseval's identity we have
\begin{align}\label{Parseval}
&\sum_{n\geq 0} P^{\cap}_{\beta,x,y}(n\in\tau) = \frac{1}{2}\left(1 + <\hat{\phi}^{\texttt{sym}}_{\beta,x_1,y_1},\hat{\phi}^{\texttt{sym}}_{\beta,x_2,y_2}>_{L^2(-\pi,\pi)}\right)\\
& = \frac{1}{2}\left(1 + <2\re((\Id - \hat{\phi}_{\beta})^{-1}_{x_1,y_1})-1,2\re((\Id - \hat{\phi}_{\beta})^{-1}_{x_2,y_2})-1>_{L^2(-\pi,\pi)}\right).
\end{align}
But for all $s,t\in E^q$, 
\[(\Id - \hat{\phi}_{\beta})^{-1}_{s,t} \stackrel{L^2}{\rightarrow} (\Id - \hat{\phi}_0)^{-1}_{s,t}\]
because (\ref{domination}) in Proposition \ref{Fourier} allows the use of the Dominated Convergence Theorem. As a consequence the scalar product in the right-hand side of (\ref{Parseval}) is continuous at $\beta=0$.

We now deal with the case $1\leq l \leq q$. Let us write $\tau^{(2)}-l := \{\tau_n^{(2)}-l, n\geq0\}$. Then
\begin{equation*}
\sum_{l=1}^{\infty}  \delta_n^{(1)} \delta_{n+l}^{(2)} = |\tau^{(1)} \cap (\tau^{(2)}-l) |
\end{equation*}
and $\tau^{(1)} \cap (\tau^{(2)}-l) $ is a delayed renewal process with the same interarrival time distribution as $\tau^{(1)} \cap \tau^{(2)}$, which is the case $l=0$.
\end{proof}

\subsubsection{Control of the second moment}

We will prove in this section

\begin{pr}\label{second_moment}
 There exists $\beta_0>0$ such that for all $\beta\leq\beta_0$,
\begin{equation*}
 \sup_N \mathbb{E}\left(Z_{N,\beta,h_c^a(\beta)}^2\right) < \infty.
\end{equation*}
\end{pr}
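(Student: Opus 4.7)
The plan is to reduce $\mathbb{E}(Z_{N,\beta,h_c^a(\beta)}^2)$ to an exponential moment involving two independent copies of the Markov renewal process under $P_\beta$, and then apply Proposition \ref{intersection}.

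First I would integrate out the Gaussian disorder, this time with weight $\beta\sum_n\omega_n(\delta_n^{(1)}+\delta_n^{(2)})$. Using $\rho_0=1$ and $\rho_l=0$ for $|l|>q$, the variance splits into two single-replica pieces of the form appearing in (\ref{variance}) plus a symmetric cross interaction, giving
\begin{equation*}
\mathbb{E}(Z_{N,\beta,h}^2) = E^{\otimes 2}\left[A^{(1)}_N A^{(2)}_N\; \exp\left(\beta^2\sum_{i,j=1}^N\rho_{|i-j|}\delta_i^{(1)}\delta_j^{(2)}\right)\right],
\end{equation*}
where $A^{(k)}_N := \exp\bigl(h\imath_N^{(k)} + \tfrac{\beta^2}{2}\imath_N^{(k)} + \beta^2\sum_{1\le i<j\le N}\rho_{j-i}\delta_i^{(k)}\delta_j^{(k)}\bigr)$ is precisely the integrand of the single-replica annealed partition function.

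Next I would perform the change of measure from $P$ to $P_\beta$ separately on each replica. Combining Lemma \ref{lemma_G} (to replace the $\rho$-sum by $\sum_n G(\overline{T}_n)$ modulo a bounded additive term) with the computation in the proof of Lemma \ref{lemma_tilde} yields, for any $f\geq 0$,
\begin{equation*}
E\bigl[A^{(k)}_N f(\tau^{(k)})\bigr] \leq C\, E_\beta\Bigl[e^{(h+\beta^2/2+\log\lambda(\beta))\imath_N^{(k)}}f(\tau^{(k)})\Bigr],
\end{equation*}
where $C$ depends only on the uniform bounds $c\leq\nu_\beta^*\leq C'$ of the Perron--Frobenius eigenvector on the finite space $E^q$ (together with the $O(1)$ error from Lemma \ref{lemma_G}), and hence is independent of $N$. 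At $h=h_c^a(\beta)=-\beta^2/2-\log\lambda(\beta)$ the tilting exponent vanishes; applying the inequality to both replicas, which are independent under $P^{\otimes 2}$, one gets
\begin{equation*}
\mathbb{E}\bigl(Z_{N,\beta,h_c^a(\beta)}^2\bigr)\leq C^2\, E_\beta^{\otimes 2}\left[\exp\left(\beta^2\sum_{i,j=1}^N\rho_{|i-j|}\delta_i^{(1)}\delta_j^{(2)}\right)\right].
\end{equation*}

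Finally I would control the right-hand side by a H\"older decomposition. Setting $S_l:=\sum_n\delta_n^{(1)}\delta_{n+l}^{(2)}$ and using $\rho_l=0$ for $|l|>q$ together with $|\rho_l|\leq 1$, H\"older's inequality with the $2q+1$ equal weights $1/(2q+1)$ gives
\begin{equation*}
E_\beta^{\otimes 2}\Bigl[e^{\beta^2\sum_{l=-q}^q \rho_{|l|} S_l}\Bigr]\leq \prod_{l=-q}^{q} E_\beta^{\otimes 2}\Bigl[e^{(2q+1)\beta^2 S_l}\Bigr]^{1/(2q+1)}.
\end{equation*}
By the symmetry $S_{-l}\stackrel{d}{=}S_l$ obtained by swapping the roles of the two replicas, each factor falls within the scope of Proposition \ref{intersection}: its proof only requires the inequality $e^{\beta^2}\theta(\beta)<1$, and the same argument delivers $e^{(2q+1)\beta^2}\theta(\beta)<1$ for $\beta$ small, since $\theta$ is continuous at $0$ with $\theta(0)<1$. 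Choosing $\beta_0$ accordingly, each Hölder factor is finite and bounded uniformly in $N$. The main obstacle is the pointwise-in-$N$ version of Lemma \ref{lemma_tilde} needed in the change-of-measure step: the paper states it as an equality of limiting free energies, whereas one needs here a constant-factor comparison uniform in $N$, obtained by carefully tracking the Radon--Nikodym factor $\nu_\beta(\overline{T}_1)/\nu_\beta(\overline{T}_{\imath_N+1})$ from the proof of Lemma \ref{lemma_tilde} and verifying that it absorbs, independently of the replica, into a single multiplicative constant.
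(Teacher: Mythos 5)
Your proposal is correct and follows essentially the same route as the paper: Gaussian integration over the two replicas, a uniform-in-$N$ change of measure from $P$ to $P_\beta$ that turns each single-replica annealed weight into an $O(1)$ factor at $h=h_c^a(\beta)$, then H\"older to isolate the cross-terms $\sum_n\delta_n^{(1)}\delta_{n+l}^{(2)}$, concluded by Proposition \ref{intersection}. You also correctly flag the two points the paper leaves implicit -- that the Radon--Nikodym factor $\nu_\beta(\overline{T}_1)/\nu_\beta(\overline{T}_{\imath_N+1})$ is uniformly bounded because $\nu_\beta^*$ lives on the finite set $E^q$, and that the H\"older exponent forces one to rerun the $e^{c\beta^2}\theta(\beta)<1$ argument with $c=2q+1$, which the continuity of $\theta$ at $0$ with $\theta(0)<1$ delivers for small $\beta$.
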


\begin{proof}
 Replica method gives:
\begin{align*}
  \mathbb{E} (Z_{N,\beta,h})^2 &= E^{\otimes2}\left( e^{h\sum_{n=1}^N(\delta_n^{(1)}+\delta_n^{(2)})} \mathbb{E}(e^{\beta\sum_{n=1}^N \omega_n (\delta_n^{(1)}+\delta_n^{(2)}) })\right)\\
 & =  E^{\otimes2} \left( e^{\sum_{i=1,2}\sum_{n=1}^N h\delta_n^{(i)}+\frac{\beta^2}{2}\var(\sum \omega_n \delta_n^{(i)})} e^{\beta^2 \cov(\sum \omega_n \delta_n^{(1)},\sum \omega_n \delta_n^{(2)})}\right)
\end{align*}
and
\begin{align*}
 \cov\left(\sum_{n=1}^N \omega_n \delta_n^{(1)},\sum_{n=1}^N \omega_n \delta_n^{(2)}\right) &= \sum_{n,m=1}^N \cov(\omega_n,\omega_m) \delta_n^{(1)} \delta_m^{(2)}\\
& = \sum_{n=1}^N \delta_n^{(1)}\delta_n^{(2)} + \sum_{1\leq|n-m|\leq q} \cov(\omega_n,\omega_m) \delta_n^{(1)} \delta_m^{(2)}.
\end{align*}
Define
\begin{equation*}
C_N(\tau^{(1)},\tau^{(2)}) = \sum_{n=1}^N \delta_n^{(1)}\delta_n^{(2)} + \sum_{k=1}^q \rho_k \sum_{n=1}^N (\delta_n^{(1)}\delta_{n+k}^{(2)} + \delta_n^{(2)}\delta_{n+k}^{(1)})
\end{equation*}
and take $h = h_c^a(\beta)$. Then
\begin{equation*}
 \mathbb{E} (Z_{N,\beta,h_c^a})^2 \leq C_{\beta} E_{\beta}^{\otimes2}\left( e^{\beta^2 C_N(\tau^{(1)},\tau^{(2)}) } \right).
\end{equation*}
Moreover, notice that
\begin{equation*}
C_N(\tau^{(1)},\tau^{(2)}) \leq \sum_{n=1}^{\infty} \delta_n^{(1)}\delta_n^{(2)} + \sum_{k=1}^q \rho_q^+ \sum_{n=1}^{\infty} (\delta_n^{(1)}\delta_{n+k}^{(2)} + \delta_n^{(2)}\delta_{n+k}^{(1)})
\end{equation*}
where $\rho_q^+ = \rho_q\vee0$. By repeated use of Holder's inequality we prove that for all $\beta>0$, there exists nonnegative constants $C_{\beta}, c_0, c_1, \ldots, c_q$ and $e_0, e_1, \ldots, e_q$ such that
\begin{equation*}
 \mathbb{E} (Z_{N,\beta,h_c^a})^2 \leq C_{\beta} \prod_{k=0}^q \left( E_{\beta}^{\otimes2}\left( e^{c_k \beta^2 \sum_{n=1}^{\infty} \delta_n^{(1)} \delta_{n+k}^{(2)}} \right) \right) ^{e_k}.
\end{equation*}
We conclude by using Proposition \ref{intersection}.
\end{proof}

\subsubsection{End of $\limsup$ part}

We define:
\begin{equation*}
 A_N^{\gamma} = \{|\tau \cap [0,N]|\leq N^{\gamma}\}
\end{equation*}
(sometimes we will omit the superscript $\gamma$). We will prove 
\begin{pr}\label{minoration}
 For all $\beta\leq\beta_0$ (with $\beta_0$ as in Proposition \ref{second_moment}), for all $\gamma<\alpha$, there exists $c>0$ such that
\begin{equation*}
 \liminf_N \mathbb{P}(P_{N,\beta,h_c^a}(\overline{A_N^{\gamma}})>c)>c.
\end{equation*}
\end{pr}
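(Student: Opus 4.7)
The plan is to apply a Paley--Zygmund-type argument to $Z_N(\omega) := Z_{N,\beta,h_c^a(\beta),\omega}$ to lower-bound it on a set of positive $\omega$-probability, then use Markov's inequality on the restricted partition function
\[Z_N^{A_N^\gamma}(\omega) := E\!\left(e^{\sum_{n=1}^N(\beta\omega_n + h_c^a(\beta))\delta_n}\mathbf{1}_{A_N^\gamma}\right)\]
to show that $Z_N^{A_N^\gamma}$ is typically negligible compared to $Z_N$. Since $P_{N,\beta,h_c^a(\beta),\omega}(\overline{A_N^\gamma}) = 1 - Z_N^{A_N^\gamma}/Z_N$, it will suffice to exhibit an absolute constant $a>0$ such that $\{Z_N \geq a\}\cap\{Z_N^{A_N^\gamma} \leq a/2\}$ has $\mathbb{P}$-mass uniformly bounded below in $N$.

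The first step is to show that $\mathbb{E}(Z_N) \geq c_-$ uniformly in $N$. Running the change-of-measure computation from the proof of Lemma \ref{lemma_tilde}, but for the free rather than the constrained partition function, one rewrites $\mathbb{E}(Z_N) = Z^a_{N,\beta,h_c^a(\beta)}$ as an expectation under $P_\beta$ in which the overall weight $e^{(h+\beta^2/2+\log\lambda(\beta))\imath_N}$ is identically $1$, so only an eigenvector ratio of the form $\nu_\beta(\overline{T}_1)/\nu_\beta(\overline{T}_{\imath_N})$ together with the boundary term from Lemma \ref{lemma_G} remain. Since $\nu_\beta^*$ is a positive vector on the finite set $E^q$, this ratio lies deterministically in some $[c_-,c_+]$, giving a two-sided bound on $\mathbb{E}(Z_N)$. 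Combined with Proposition \ref{second_moment}, the Paley--Zygmund inequality then supplies $c_0 > 0$ such that
\[\mathbb{P}\!\left(Z_N \geq c_-/2\right) \geq c_0 \qquad \text{for all }N.\]

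The second step is to show $\mathbb{E}(Z_N^{A_N^\gamma}) \to 0$, via the analogous change-of-measure estimate $\mathbb{E}(Z_N^{A_N^\gamma}) \leq C\,P_\beta(\imath_N \leq N^\gamma)$ and a heavy-tail control of $\imath_N$ under $P_\beta$. Using $\{\imath_N \leq N^\gamma\} = \{\tau_{\lfloor N^\gamma\rfloor+1} > N\}$ and reading off the semi-Markov kernel at the end of Section \ref{statement_result}, the conditional tail $P_\beta(T_k > m \mid \text{past})$ is uniformly bounded by a constant multiple of $\sum_{n>m}K(n)$ (the probability of visiting the cemetery state $\star$ is at most $1$, and from $\star$ the jump law is $K(\cdot)/K(\star)$). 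Hence $\tau_k$ is stochastically dominated by a sum of $k$ i.i.d.\ variables with tail index $\alpha$, and for $k\sim N^\gamma$ with $\gamma<\alpha$ the classical one-big-jump estimate yields
\[P_\beta(\tau_{\lfloor N^\gamma\rfloor+1} > N) \leq c\,N^\gamma \sum_{n>N} K(n) = o(1).\]
Markov's inequality then gives $\mathbb{P}(Z_N^{A_N^\gamma} \geq c_-/4) \to 0$, so for $N$ large the events $\{Z_N \geq c_-/2\}$ and $\{Z_N^{A_N^\gamma} \leq c_-/4\}$ intersect with $\omega$-probability at least $c_0/2$, on which $P_{N,\beta,h_c^a(\beta),\omega}(\overline{A_N^\gamma}) \geq 1/2$; taking $c := \min(1/2, c_0/2)$ then completes the argument.

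The main obstacle is the pair of uniform change-of-measure estimates—the lower bound $\mathbb{E}(Z_N) \geq c_-$ and the inequality $\mathbb{E}(Z_N^{A_N^\gamma}) \leq C\,P_\beta(\imath_N \leq N^\gamma)$—because Lemma \ref{lemma_tilde} is stated for the constrained partition function, whereas here we work with the free version (as declared at the beginning of Section \ref{irr_section}). Extending the change of measure requires absorbing the boundary term of Lemma \ref{lemma_G} and the freedom of the terminal $q$-tuple into the uniform positivity and finiteness of $\nu_\beta^*$ on $E^q$. Once these uniform bounds are secured, the remainder of the argument reduces to Paley--Zygmund and a classical heavy-tailed tail estimate.
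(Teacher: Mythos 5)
Your proof is correct and follows essentially the same approach as the paper: a uniform lower bound on $\mathbb{E}(Z_N)$ via the change of measure, Paley--Zygmund combined with the second-moment bound (Proposition \ref{second_moment}), the change-of-measure estimate $\mathbb{E}(Z_N^{A_N^\gamma}) \leq C\,P_\beta(A_N^\gamma)$ (this is Lemma \ref{aux}), and the convergence $P_\beta(A_N^\gamma)\to 0$ (Lemma \ref{aux2}). The only differences are cosmetic: you use an intersection-of-events assembly where the paper lower-bounds $\mathbb{E}P_{N,\beta,h_c^a}(\overline{A_N^\gamma})$ and then applies an elementary inequality, and you justify $P_\beta(A_N^\gamma)\to 0$ by stochastic domination plus a one-big-jump estimate where the paper reduces to the embedded classical renewal process $\tau_z$ with tail exponent $\alpha$.
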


Once this is proved, the following proposition provides the limsup part of Theorem \ref{result}:
\begin{pr}\label{limsup}
 If for all $\gamma < \alpha$, there exists some positive constant $c$ such that
\begin{equation}\label{nbr_ren}
 \liminf_N \mathbb{P}(P_{N,\beta,h_c^a}(\overline{A_N^{\gamma}})>c)>c
\end{equation} then
\begin{equation*}
\limsup_{h\rightarrow h_c^a(\beta)^+} \frac{\log(F(\beta,h))}{\log(h-h_c^a(\beta))} \leq \frac{1}{\alpha}.
\end{equation*}
\end{pr}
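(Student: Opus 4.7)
The plan is to exploit the Radon-Nikodym identity
\[Z_{N,\beta,h_c^a(\beta)+\Delta,\omega}\,/\,Z_{N,\beta,h_c^a(\beta),\omega} \;=\; E_{N,\beta,h_c^a(\beta),\omega}\bigl(e^{\Delta \imath_N}\bigr)\]
together with the bound on $\imath_N$ supplied by hypothesis (\ref{nbr_ren}). Let $B_N$ denote the disorder event $\{\omega:P_{N,\beta,h_c^a(\beta),\omega}(\overline{A_N^\gamma}) > c\}$; by the liminf assumption, $\mathbb{P}(B_N) > c$ for all $N$ large enough. On $B_N$, restricting the above expectation to the event $\{\imath_N > N^\gamma\}$ gives $E_{N,\beta,h_c^a(\beta),\omega}(e^{\Delta \imath_N}) \geq c\, e^{\Delta N^\gamma}$, while on $B_N^c$ monotonicity of $h\mapsto \log Z$ yields the trivial factor $\geq 1$. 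Taking logarithms and integrating over $\omega$ then produces
\[\mathbb{E}\log Z_{N,\beta,h_c^a(\beta)+\Delta,\omega} \;\geq\; \mathbb{E}\log Z_{N,\beta,h_c^a(\beta),\omega} + c\bigl(\Delta N^\gamma + \log c\bigr).\]

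The term $\mathbb{E}\log Z_{N,\beta,h_c^a(\beta),\omega}$ is controlled from below by the one-contact estimate $Z_{N,\beta,h_c^a(\beta),\omega} \geq K(N) \exp(\beta\omega_N + h_c^a(\beta))$, which together with $K(N)=L(N)/N^{1+\alpha}$ gives $\mathbb{E}\log Z_{N,\beta,h_c^a(\beta),\omega} \geq -C\log N$ for $N$ large. Super-additivity of the constrained partition function (whose free energy coincides with that of the free one used in this section) provides $F(\beta, h) \geq \frac{1}{N}\mathbb{E}\log Z_{N,\beta,h,\omega}$ for every $N$, and hence
\[F(\beta, h_c^a(\beta) + \Delta) \;\geq\; c\,\Delta\, N^{\gamma-1} \,-\, \frac{C'\log N}{N}\]
for $N$ large enough.

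The final step is to optimize $N$ as a function of $\Delta$. For any fixed $a > 1/\gamma$, setting $N = \lceil \Delta^{-a}\rceil$ makes the first term of order $\Delta^{1+a(1-\gamma)}$ and the second of order $\Delta^{a} \log(1/\Delta)$; the choice $a > 1/\gamma$ is precisely the condition $1+a(1-\gamma) < a$, so the positive term dominates and $F(\beta, h_c^a(\beta) + \Delta) \geq \tfrac{c}{2} \Delta^{1+a(1-\gamma)}$ for $\Delta$ small enough. Dividing by $\log \Delta < 0$ yields $\limsup_{\Delta\searrow 0} \log F / \log \Delta \leq 1 + a(1-\gamma)$; letting $a\searrow 1/\gamma$ and then $\gamma \nearrow \alpha$ (both permitted, since the hypothesis is available for every $\gamma<\alpha$) produces the desired bound $1/\alpha$.

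The main subtlety is the joint tuning of the window size $N$ and the exponent $a$ so as to trade the good term $c\Delta N^{\gamma-1}$ against the logarithmic error from the one-contact bound; the crude estimates suffice exactly because the hypothesis allows both $\gamma$ and $a$ to be chosen with some slack. The free-versus-constrained partition function point for the super-additivity step is standard in this setting and causes no real difficulty.
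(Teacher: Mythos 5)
Your argument is essentially the one that the paper points to in \cite{lacoin_irrelevance} (the paper itself gives no proof, merely noting that Lacoin's argument goes through without the independence assumption), and the overall structure — Radon--Nikodym identity $Z_{N,h_c^a+\Delta}/Z_{N,h_c^a}=E_{N,h_c^a,\omega}(e^{\Delta\imath_N})$, restriction to $\overline{A_N^\gamma}$ on the disorder event $B_N$, the one-contact (or zero-contact) lower bound $\mathbb{E}\log Z_{N,h_c^a}\geq -C\log N$, finite-volume lower bound on $F$, and the $N\approx\Delta^{-a}$ optimisation with $a>1/\gamma$ and then $\gamma\uparrow\alpha$ — is exactly the right one and the exponent arithmetic ($1+a(1-\gamma)\to 1/\gamma\to 1/\alpha$) is correct.

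One point you state too quickly: super-additivity of $\mathbb{E}\log Z$ holds for the \emph{constrained} partition function, giving $F(\beta,h)\geq\frac{1}{N}\mathbb{E}\log Z^{\texttt{c}}_{N,\beta,h,\omega}$; it does \emph{not} directly give $F(\beta,h)\geq\frac{1}{N}\mathbb{E}\log Z_{N,\beta,h,\omega}$ for the free partition function used in this section (one has $Z^{\texttt{free}}\geq Z^{\texttt{c}}$, which is the wrong direction). The fix is standard — e.g.\ force a contact at $N+1$ to get $Z^{\texttt{c}}_{N+1}\geq K(N+1)e^{\beta\omega_{N+1}+h}Z^{\texttt{free}}_N$, hence $\mathbb{E}\log Z^{\texttt{c}}_{N+1}\geq\mathbb{E}\log Z^{\texttt{free}}_N-C\log N$ — and the extra $O(\log N)$ is absorbed into the $C'\log N/N$ term already present, so your conclusion is unaffected; but the inequality as written is not literally true, and the proof should either run through the constrained partition function from the start (Propositions \ref{second_moment} and \ref{minoration} transfer trivially since $Z^{\texttt{c}}\leq Z^{\texttt{free}}$) or explicitly incur and absorb the logarithmic cost.
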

Proposition \ref{limsup} is proved in \cite{lacoin_irrelevance}. One can check that the independance assumption is not needed there.

To prove Proposition \ref{minoration}, we need the control of the second moment (see Proposition \ref{second_moment}) and several lemmas, such as:

\begin{lem}[Paley-Zygmund inequality]\label{PZ}
 If $Z$ is a nonnegative random variable with finite variance, and if $0<u<1$, then
\begin{equation*}
 \mathbb{P}(Z\geq u\mathbb{E}(Z)) \geq (1-u)^2 \frac{(\mathbb{E}(Z))^2}{\mathbb{E}(Z^2)}.
\end{equation*}
\end{lem}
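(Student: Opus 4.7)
The plan is to prove this in the usual textbook manner, using Cauchy--Schwarz after decomposing $\mathbb{E}(Z)$ according to whether $Z$ is above or below the threshold $u\mathbb{E}(Z)$. There is no obstacle here; the statement is the standard Paley--Zygmund inequality, and the argument is a few lines.

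First I would split the expectation as
\[
\mathbb{E}(Z) \;=\; \mathbb{E}\bigl(Z\,\mathbf{1}_{\{Z < u\mathbb{E}(Z)\}}\bigr) \;+\; \mathbb{E}\bigl(Z\,\mathbf{1}_{\{Z \geq u\mathbb{E}(Z)\}}\bigr).
\]
The first term is bounded deterministically by $u\mathbb{E}(Z)$, so rearranging yields
\[
(1-u)\,\mathbb{E}(Z) \;\leq\; \mathbb{E}\bigl(Z\,\mathbf{1}_{\{Z \geq u\mathbb{E}(Z)\}}\bigr).
\]

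Next I would apply the Cauchy--Schwarz inequality to the right-hand side:
\[
\mathbb{E}\bigl(Z\,\mathbf{1}_{\{Z \geq u\mathbb{E}(Z)\}}\bigr) \;\leq\; \sqrt{\mathbb{E}(Z^2)}\,\sqrt{\mathbb{P}\bigl(Z \geq u\mathbb{E}(Z)\bigr)}.
\]
Combining the two displays, squaring and rearranging gives exactly
\[
\mathbb{P}\bigl(Z \geq u\mathbb{E}(Z)\bigr) \;\geq\; (1-u)^2\,\frac{(\mathbb{E}(Z))^2}{\mathbb{E}(Z^2)},
\]
which is the desired bound. The finiteness of $\var(Z)$ ensures $\mathbb{E}(Z^2)<\infty$ so the quotient is well defined; if $\mathbb{E}(Z)=0$ then $Z=0$ a.s.\ and the inequality is trivial, so we may assume $\mathbb{E}(Z)>0$ in the Cauchy--Schwarz step.
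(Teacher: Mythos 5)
Your proof is correct and is the standard textbook argument for Paley--Zygmund: split $\mathbb{E}(Z)$ across the threshold event, bound the low part by $u\mathbb{E}(Z)$, apply Cauchy--Schwarz to the high part, then square and rearrange. The paper states this lemma without proof (it is a classical inequality cited for later use), so there is no alternate argument in the paper to compare against; your derivation is the one any reader would supply.
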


\begin{lem}\label{mean}
 For all $\beta>0$, $\liminf_{N\rightarrow +\infty} \mathbb{E}Z_{N,\beta,h_c^a}\geq c(\beta) > 0$.
\end{lem}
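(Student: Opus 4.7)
The plan is to adapt the change-of-measure identity of Lemma \ref{lemma_tilde} to the free partition function and then observe that choosing $h=h_c^a(\beta)$ makes the residual expression trivially bounded below. After integrating out the Gaussian disorder via (\ref{variance}) and invoking Lemma \ref{lemma_G} to replace $\sum_{1\le i<j\le N}\rho_{j-i}\delta_i\delta_j$ by $\sum_{n=1}^{\iota_N}G(\overline{T}_n)$ up to a bounded additive error, one expands $\mathbb{E}Z_{N,\beta,h}$ as a sum over $\iota_N=n$ and over interarrival $q$-tuples, and applies the Perron--Frobenius identity $Q_\beta^*\nu_\beta^*=\lambda(\beta)\nu_\beta^*$ to convert the weights involving $Q_\beta$ into weights involving $\tilde Q_\beta$, absorbing one factor $\lambda(\beta)^n$ per path. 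This is verbatim the manipulation carried out in the proof of Lemma \ref{lemma_tilde}; the only difference is that the free version replaces the constraint $t_1+\cdots+t_n=N$ by $t_1+\cdots+t_n\le N$ and introduces an overshoot factor for the last jump straddling $N$.

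The end result should be
\[
\mathbb{E}Z_{N,\beta,h}\;\asymp\;E_\beta\left[e^{(h+\beta^2/2+\log\lambda(\beta))\iota_N}\,\frac{\nu_\beta(\overline{T}_1)}{\nu_\beta(\overline{T}_{\iota_N})}\right],
\]
with $\asymp$ constants depending only on $\beta$, $\rho$ and $q$, the eigenvector ratio lying in a positive bounded interval because $\nu_\beta^*$ has finitely many positive entries. Specialising to $h=h_c^a(\beta)=-\beta^2/2-\log\lambda(\beta)$ (Theorem \ref{annealed_critical_curve}) kills the exponential factor and leaves
\[
\mathbb{E}Z_{N,\beta,h_c^a(\beta)}\;\ge\;c\,P_\beta(\iota_N\ge 1).
\]
Since under $P_\beta$ the Markov renewal process $\tau$ is proper (it has a.s.\ infinitely many points), $P_\beta(\iota_N\ge 1)\to 1$ as $N\to\infty$, and the claimed lower bound follows with some positive constant $c(\beta)$.

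The mildly delicate step is carrying out the Lemma \ref{lemma_tilde} manipulation without a $\delta_N$ constraint: one has to keep track of the boundary $q$-tuple $\overline{T}_{\iota_N}$ whose last $q-1$ coordinates reach past $N$. Thanks to the finite-range assumption $\rho_m=0$ for $m>q$, the terms $G(\overline{T}_k)$ for $k$ within $q$ of $\iota_N$ perturb the exponent by at most $q^2\beta^2\max_{1\le i\le q}|\rho_i|$, a bounded amount that is absorbed by the multiplicative constants hidden in $\asymp$; similarly, the overshoot factor for $T_{\iota_N+1}>N-\tau_{\iota_N}$ is trapped between two positive constants uniformly in the Markov state.
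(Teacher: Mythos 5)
Your proposal is correct and takes the same route the paper gestures at: the paper's two-line proof simply recalls the constrained bound $\mathbb{E}Z^{\texttt{c}}_{N,\beta,h_c^a}\asymp P_\beta(N\in\tau)$ coming from Lemmas \ref{lemma_G} and \ref{lemma_tilde}, and asserts that dropping the $\delta_N$ constraint removes the vanishing factor $P_\beta(N\in\tau)$, which is exactly what your explicit change-of-measure computation for the free partition function produces. One small wording caveat: the overshoot factor $P(T_{\imath_N+1}>N-\tau_{\imath_N})$ is \emph{not} itself bounded below uniformly in $N$ and in the Markov state (it vanishes for large overshoots); what your argument actually needs, and what holds because $E^q$ is finite and $\nu_\beta^*,\lambda(\beta)$ are strictly positive, is that the \emph{ratio} between this tail probability under the original kernel $K$ and under the tilted semi-Markov kernel of $P_\beta$ is trapped between two positive constants uniformly in the state --- that is what keeps the $\asymp$ valid and leads to your conclusion $\mathbb{E}Z_{N,\beta,h_c^a(\beta)}\geq c\,P_\beta(\imath_N\geq1)\to c>0$.
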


\begin{proof}
In the first part of the paper it was proved that 
\begin{align*}
\mathbb{E} Z_{N,\beta,h_c^a(\beta)} \geq c_1(\beta) P_{\beta}(n\in\tau)
\end{align*}
when considering the constraint partition function. With free partition function, it is not difficult to prove that
\begin{equation*}
\mathbb{E} Z_{N,\beta,h_c^a(\beta)} \geq c_1(\beta).
\end{equation*}
\end{proof}

\begin{lem}\label{minorZ}
 For all $\beta\leq\beta_0$ ($\beta_0$ as in Proposition \ref{second_moment}), there exists $\delta >0$, $c\in (0,1)$ such that
\begin{equation*}
\inf_N \mathbb{P}(Z_{N,\beta,h_c^a}\geq \delta)>c.
\end{equation*}
\end{lem}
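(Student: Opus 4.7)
The plan is to apply the Paley--Zygmund inequality (Lemma \ref{PZ}) to $Z_{N,\beta,h_c^a}$, exploiting the two-sided control on its first two moments that has just been established, and then handle small $N$ separately.

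First, fix $\beta \leq \beta_0$. By Lemma \ref{mean}, there exists $c_1(\beta)>0$ and $N_0$ such that for all $N \geq N_0$,
\[\mathbb{E} Z_{N,\beta,h_c^a(\beta)} \geq c_1(\beta).\]
By Proposition \ref{second_moment}, there exists $C_2(\beta)<\infty$ such that
\[\sup_N \mathbb{E}\bigl(Z_{N,\beta,h_c^a(\beta)}^2\bigr) \leq C_2(\beta).\]
Applying Lemma \ref{PZ} to $Z = Z_{N,\beta,h_c^a(\beta)}$ with $u=1/2$, I obtain, for every $N\geq N_0$,
\[\mathbb{P}\!\left(Z_{N,\beta,h_c^a(\beta)} \geq \tfrac{1}{2}\mathbb{E} Z_{N,\beta,h_c^a(\beta)}\right) \geq \frac{1}{4}\,\frac{(\mathbb{E} Z_{N,\beta,h_c^a(\beta)})^2}{\mathbb{E}(Z_{N,\beta,h_c^a(\beta)}^2)} \geq \frac{c_1(\beta)^2}{4\,C_2(\beta)}.\]
In particular, setting $\delta_1 := c_1(\beta)/2$ and $c_1' := c_1(\beta)^2/(4C_2(\beta))$, we have $\mathbb{P}(Z_{N,\beta,h_c^a(\beta)} \geq \delta_1) \geq c_1'$ for every $N \geq N_0$.

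It remains to handle the finitely many values $N \in \{1, \ldots, N_0-1\}$. For each such $N$, the partition function $Z_{N,\beta,h_c^a(\beta)}$ is a strictly positive random variable (it is an expectation under $P$ of a strictly positive integrand that includes the zero-jump contribution), hence one can find $\delta_N>0$ and $c_N\in(0,1)$ with $\mathbb{P}(Z_{N,\beta,h_c^a(\beta)} \geq \delta_N) \geq c_N$ (for instance by choosing $\delta_N$ small enough that $\mathbb{P}(Z_{N,\beta,h_c^a(\beta)} < \delta_N) \leq 1/2$). Taking
\[\delta := \min(\delta_1, \delta_1, \ldots, \delta_{N_0-1}), \qquad c := \min(c_1', c_1, \ldots, c_{N_0-1}) \in (0,1),\]
yields $\inf_N \mathbb{P}(Z_{N,\beta,h_c^a(\beta)} \geq \delta) \geq c > 0$, which is the claim.

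The only subtle point is the choice of $u$ in Paley--Zygmund (any $u\in(0,1)$ works, $u=1/2$ being convenient) and the a.s.\ positivity of $Z_N$ for each fixed $N$; neither is a real obstacle. The genuine work has already been done in Proposition \ref{second_moment}, whose proof rested on the intersection estimate of Proposition \ref{intersection}.
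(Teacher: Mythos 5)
Your proof is correct and takes essentially the same route as the paper: Paley--Zygmund applied to $Z_{N,\beta,h_c^a}$, with the first moment bounded below by Lemma \ref{mean} and the second moment bounded above by Proposition \ref{second_moment}. You are in fact slightly more careful than the paper, which writes ``for $N$ large enough'' and then closes with the somewhat opaque ``choose $u$ close enough to $1$'' without addressing the finitely many small $N$; your explicit treatment of $N < N_0$ via a.s.\ positivity of $Z_N$ (and your cleaner fixed choice $u=1/2$) tidies this up. The only blemish is the typo $\delta := \min(\delta_1,\delta_1,\ldots)$, where the first two arguments should presumably be the large-$N$ threshold and $\delta_1$ from $N=1$.
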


\begin{proof}
 From Lemma \ref{PZ}, we have for all $u\in(0,1)$,
\begin{equation*}
 \mathbb{P}(Z_{N,\beta,h_c^a}\geq u\mathbb{E}(Z_{N,\beta,h_c^a})) \geq (1-u)^2 \frac{(\mathbb{E}(Z_{N,\beta,h_c^a}))^2}{\mathbb{E}(Z_{N,\beta,h_c^a}^2)}
\end{equation*}
For $N$ large enough and $\beta\leq\beta_0$, we have from Lemma \ref{mean} and Lemma \ref{second_moment},
\begin{align*}
 \mathbb{P}(Z_{N,\beta,h_c^a}\geq \frac{u c(\beta)}{2}) &\geq \mathbb{P}(Z_{N,\beta,h_c^a}\geq u\mathbb{E}(Z_{N,\beta,h_c^a}))\\
& \geq (1-u)^2 \frac{(c(\beta)/2)^2}{\sup_N \mathbb{E}(Z_{N,\beta,h_c^a}^2)},
\end{align*}
which is positive thanks to Proposition \ref{second_moment}. The result follows by choosing $u$ close enough to 1.
\end{proof}

\begin{lem}\label{aux}
 For all $\beta>0$, there exists $C_{\beta}>0$ such that
\begin{equation*}
 \mathbb{E}Z_{N,\beta,h_c^a}P_{N,\beta,h_c^a}(A_N^{\gamma}) \leq C_{\beta} P_{\beta}(A_N^{\gamma}).
\end{equation*}
\end{lem}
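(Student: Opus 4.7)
The plan is to integrate out the disorder first and then recycle the change of measure from $P$ to $P_\beta$ that was already carried out in the proof of Lemma \ref{lemma_tilde}.

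From the definition of the polymer measure (free version),
\[
Z_{N,\beta,h,\omega}\,P_{N,\beta,h,\omega}(A_N^{\gamma}) \;=\; E\!\left[\exp\!\Big(\sum_{n=1}^{N}(\beta\omega_n+h)\delta_n\Big)\mathbf{1}_{A_N^{\gamma}}\right],
\]
so taking $\mathbb{E}$, applying Fubini, and computing the Gaussian integral exactly as in (\ref{variance}) yields
\[
\mathbb{E}\bigl[Z_{N,\beta,h_c^a}\,P_{N,\beta,h_c^a}(A_N^{\gamma})\bigr] \;=\; E\!\left[\exp\!\Big((h_c^a+\tfrac{\beta^2}{2})\imath_N+\beta^2\!\!\!\sum_{1\le i<j\le N}\!\rho_{j-i}\delta_i\delta_j\Big)\mathbf{1}_{A_N^{\gamma}}\right].
\]
Lemma \ref{lemma_G} allows me to replace the correlation double sum by $\sum_{k=1}^{\imath_N}G(\overline{T}_k)$ at the cost of an absorbable factor $e^{\beta^2 C(\rho,q)}$, and by Theorem \ref{annealed_critical_curve} the coefficient of $\imath_N$ becomes $-\log\lambda(\beta)$.

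Next I repeat the algebraic manipulation from Lemma \ref{lemma_tilde}: decompose the remaining $P$-expectation over $\imath_N=n$ and the trajectory $(\overline{T}_1,\ldots,\overline{T}_n)$, then insert the defining identity
\[
e^{\beta^2 G(\overline{t}_{k+1})}K(t_{k+q})\mathbf{1}_{\overline{t}_k\rightsquigarrow\overline{t}_{k+1}} \;=\; \lambda(\beta)\,\tilde{Q}_\beta(\overline{t}_k,\overline{t}_{k+1})\,\frac{\nu_\beta(\overline{t}_k)}{\nu_\beta(\overline{t}_{k+1})}
\]
telescopically along the trajectory. The resulting powers of $\lambda(\beta)^{n}$ cancel with $e^{-n\log\lambda(\beta)}$ up to an $O(1)$ factor, the products of $\tilde{Q}_\beta$ and $K(t_1)\cdots K(t_q)$ reassemble into $P_\beta(T_1=t_1,\ldots,T_n=t_n)$, and the surviving prefactor is a ratio $\nu_\beta(\overline{T}_1)/\nu_\beta(\overline{T}_{\imath_N-q+1})$ multiplied by boundary factors of the form $e^{\pm\beta^2 G(\cdot)}$.

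Each residual factor is uniformly bounded in $N$: since $\nu_\beta(\overline{t})=\nu_\beta^*(\overline{t}^*)$ takes only finitely many strictly positive values on the finite set $E^q$ (Perron--Frobenius), and $|G|\le q\max_{1\le i\le q}|\rho_i|$, everything is controlled by a single constant $C_\beta$. This yields
\[
\mathbb{E}\bigl[Z_{N,\beta,h_c^a}\,P_{N,\beta,h_c^a}(A_N^{\gamma})\bigr] \;\le\; C_\beta\,E_\beta\bigl[\mathbf{1}_{A_N^{\gamma}}\bigr] \;=\; C_\beta\,P_\beta(A_N^{\gamma}).
\]
The only delicate bookkeeping point is the handling of the $q-1$ phantom variables $T_{\imath_N+1},\ldots,T_{\imath_N+q-1}$ that enter $\overline{T}_k$ for $k$ close to $\imath_N$ when working with the free partition function; this was already absorbed into bounded constants in Lemma \ref{lemma_G} (where one truncates to $\sum_{k\le\imath_N-q+1}G(\overline{T}_k)$ at bounded cost) and at the end of the proof of Lemma \ref{lemma_tilde}, so the same uniform bounds apply here verbatim.
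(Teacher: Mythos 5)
Your proof is correct and takes essentially the same route as the paper: integrate out the Gaussian disorder to get the annealed weight on the event $A_N^\gamma$, substitute $h_c^a(\beta)=-\tfrac{\beta^2}{2}-\log\lambda(\beta)$ so the $\tfrac{\beta^2}{2}\imath_N$ terms cancel, and then use the eigenvector change of measure (the content of Lemma \ref{lemma_tilde}) to dominate the resulting $P$-expectation by $C_\beta P_\beta(A_N^\gamma)$. The paper compresses the telescoping step into a single inequality while you spell it out, but the argument is the same.
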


\begin{proof}
\begin{align*}
 \mathbb{E}Z_{N,\beta,h_c^a}P_{N,\beta,h_c^a}(A_N^{\gamma}) &= \mathbb{E}E\left(\mathbf{1}_{A_N} e^{\sum (\beta \omega_n + h_c^a(\beta)) \delta_n} \right)\\
 & = E\left(\mathbf{1}_{A_N} e^{(-\log\lambda(\beta)-\frac{\beta^2}{2})\sum \delta_n + \frac{\beta^2}{2}\var{\sum \omega_n \delta_n}} \right)\\
& \leq C(\beta) E\left( \mathbf{1}_{A_N} e^{-\log \lambda(\beta)\sum \delta_n + \sum \delta_n \sum_{k=1}^q \rho_k \beta^2 \delta_{n+k}}  \right)\\
& \leq C'(\beta) P_{\beta}(A_N).
\end{align*}
\end{proof}

\begin{lem}\label{aux2}
 For all $\beta >0$, $\gamma<\alpha$,
\begin{equation*}
 P_{\beta}(A_N^{\gamma}) \stackrel{N\rightarrow +\infty}{\longrightarrow} 0.
\end{equation*}
\end{lem}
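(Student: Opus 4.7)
The plan is to reduce the question to a classical (i.e., i.i.d.) heavy-tailed renewal process, by looking at $\tau$ only at the successive return times of the Markov modulating chain $(\overline{T}_k^*)_{k\geq 1}$ to a fixed state $y\in E^q$. Setting $\theta_0(y)=\inf\{k\geq 1:\overline{T}_{k-q+1}^*=y\}$ and $\theta_{n+1}(y)=\inf\{k>\theta_n(y):\overline{T}_{k-q+1}^*=y\}$ (as already used in the proof of Proposition \ref{varphi_finite}), Lemma \ref{mu_invariant} ensures that the modulating chain is positive recurrent under $P_\beta$, hence each $\theta_n(y)$ is $P_\beta$-a.s.\ finite. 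By the strong Markov property, $\tilde{\tau}_n := \tau_{\theta_n(y)}$ is then a (possibly delayed) classical renewal process.

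The key input is the tail estimate (\ref{tail}) already proved in Proposition \ref{varphi_finite}: the i.i.d.\ increments of $\tilde{\tau}$ satisfy
\[ P_{\beta,y}(\tilde{\tau}_1-\tilde{\tau}_0=n) \stackrel{n\to\infty}{\sim} c(\beta,y)\, \frac{L(n)}{n^{1+\alpha}} \]
for some positive constant $c(\beta,y)$, so the interarrival law of $\tilde\tau$ is regularly varying of index $-(1+\alpha)$ with the same slowly varying function $L$ as $K$, up to a positive multiplicative factor. Since $\tilde\tau\subseteq\tau$, we have $|\tau\cap[0,N]|\geq|\tilde\tau\cap[0,N]|$ and therefore
\[ P_\beta(A_N^\gamma) \leq P_\beta\bigl(|\tilde\tau\cap[0,N]| \leq N^\gamma\bigr) = P_\beta\bigl(\tilde\tau_{\lceil N^\gamma\rceil+1} > N\bigr). \]

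To conclude I would invoke the classical stable limit theorem for i.i.d.\ heavy-tailed sums of index $\alpha\in(0,1)$ (which covers the range of $\alpha$ assumed in Theorem \ref{result}): there is a slowly varying function $\ell$ and a normalizing sequence $a_n=n^{1/\alpha}\ell(n)$ such that $\tilde\tau_n/a_n$ converges in distribution to a positive $\alpha$-stable variable. Taking $n=\lceil N^\gamma\rceil+1$ yields $a_n\sim N^{\gamma/\alpha}\ell(N^\gamma)$, and $\gamma/\alpha<1$ together with slow variation of $\ell$ forces $N/a_n\to+\infty$. Tightness of $(\tilde\tau_n/a_n)_n$ then gives $P_\beta(\tilde\tau_n>N)\to 0$, as required.

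The only minor subtlety I foresee is the initial delay $\tilde\tau_0=\tau_{\theta_0(y)}$, which is $P_\beta$-a.s.\ finite but need not be zero under an arbitrary initial distribution for $\overline{T}_1^*$; being bounded in probability, it is negligible compared to $N$ at infinity. If one prefers to avoid invoking the stable limit theorem, the same bound follows from the ``one-big-jump'' principle for sums with regularly varying tails: write $P_\beta(\tilde\tau_n>N)\leq n P_\beta(\tilde\tau_1-\tilde\tau_0>N/n)+ P_\beta(\text{sum of variables truncated at }N/n>N)$ and observe that for $n\sim N^\gamma$ with $\gamma<\alpha$ both summands vanish as $N\to\infty$ thanks to the tail $L(n)/n^{1+\alpha}$.
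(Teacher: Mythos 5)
Your proof takes essentially the same route as the paper's: the paper also reduces to the embedded classical renewal process $\tau_z$ obtained by watching the modulating chain at returns to a fixed state $z\in E^q$, uses the inclusion $\tau_z\subseteq\tau$ to bound $P_\beta(A_N^\gamma)$ from above, and then invokes the tail estimate (\ref{tail}) to conclude that $|\tau_z\cap[0,N]|$ is of order $N^\alpha\gg N^\gamma$. The only difference is that the paper states this last step without elaboration, whereas you make it precise via the stable limit theorem (or the one-big-jump alternative), which is a welcome clarification rather than a departure.
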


\begin{proof}
Let us choose arbitrarily $z$ in $E^q$. Then:
 \begin{align*}
  P_{\beta}(A_N^{\gamma}) &= P_{\beta}(|\tau \cap [0,N]|\leq N^{\gamma})\\
& \leq P_{\beta}(|\tau_z \cap [0,N]|\leq N^{\gamma})
 \end{align*}
which tends to $0$ as $N$ tends to $+\infty$ because the tail exponent of the return times of $\tau_z$ is $\alpha$ (see proof of Proposition \ref{varphi_finite}, equation (\ref{tail})), so $|\tau_z \cap [0,N]|$ is of the order of $N^{\alpha}$.
\end{proof}

\begin{proof}[Proof of Proposition \ref{minoration}]
 We first prove that for all $a\in (0,1)$,
\begin{equation*}
 \mathbb{P}(P_{N,\beta,h_c^a}(\overline{A_N})>a) \geq \mathbb{E}P_{N,\beta,h_c^a}(\overline{A_N}) - a.
\end{equation*}
Indeed, this follows from
\begin{equation*}
 \mathbb{P}(P_{N,\beta,h_c^a}(\overline{A_N})>a) \geq \mathbb{E}P_{N,\beta,h_c^a}(\overline{A_N})\mathbf{1}_{\{P_{N,\beta,h_c^a}(\overline{A_N})>a\}}
\end{equation*}
and
\begin{equation*}
 \mathbb{E}P_{N,\beta,h_c^a}(\overline{A_N}) \leq a + \mathbb{E}P_{N,\beta,h_c^a}(\overline{A_N})\mathbf{1}_{\{P_{N,\beta,h_c^a}(\overline{A_N})>a\}}
\end{equation*}
Then, from Lemma \ref{aux},
\begin{align*}
 \mathbb{E}P_{N,\beta,h_c^a}(A_N) &\leq \mathbb{E}\left(P_{N,\beta,h_c^a}(A_N)\mathbf{1}_{\{Z_{N,\beta,h_c^a}\geq \delta\}}\right) + \mathbb{P}(Z_{N,\beta,h_c^a}<\delta)\\
&\leq \delta^{-1}\mathbb{E}Z_{N,\beta,h_c^a}P_{N,\beta,h_c^a}(A_N) + \left(1 -  \mathbb{P}(Z_{N,\beta,h_c^a}\geq\delta)\right)\\
& \leq \delta^{-1}C_{\beta}P_{\beta}(A_N) +  \left(1 -  \mathbb{P}(Z_{N,\beta,h_c^a}\geq\delta)\right).
\end{align*}
From Lemma \ref{aux2} and Lemma \ref{minorZ}, we have
\begin{equation*}
 \liminf_N  \mathbb{E}P_{N,\beta,h_c^a}(\overline{A_N}) \geq c
\end{equation*}
so
\begin{equation*}
 \liminf_N \mathbb{P}(P_{N,\beta,h_c^a}(\overline{A_n})>a) \geq c - a
\end{equation*}
and we conclude the proof by choosing $a$ in $(0,c)$.
\end{proof}

\subsection{Conclusion: proof of Theorem \ref{result}}

We can now conclude:

\begin{proof}[Proof of Theorem \ref{result}]
The bound (\ref{ann_bound}) and Proposition \ref{ann_expo} tell us that $h_c(\beta)\geq h_c^a(\beta)$ and
\[\liminf_{h\rightarrow h_c^a(\beta)^+} \frac{\log F(\beta,h)}{\log(h-h_c^a(\beta))} \geq \frac{1}{\alpha}\]
whereas Proposition \ref{minoration} and Proposition \ref{limsup} tell us that
\[\limsup_{h\rightarrow h_c^a(\beta)^+} \frac{\log F(\beta,h)}{\log(h-h_c^a(\beta))} \leq \frac{1}{\alpha}\] (and so that $h_c(\beta)\leq h_c^a(\beta)$). Therefore we have all the ingredients to prove the theorem.
\end{proof}

\section*{Acknowledgements}
The author would like to thank Giambattista Giacomin for an enlightening discussion. 

\bibliographystyle{spmpsci}      % mathematics and physical sciences
\bibliography{references}   % name your BibTeX data base

\end{document}